\newtheorem{condition}{Condition}
\newtheorem{theorem}{Theorem}[section]
\newtheorem{lemma}{Lemma}[section]
\newtheorem{proposition}{Proposition}[section]
\newcommand\reallywidehat[1]{\arraycolsep=0pt\relax%
	\begin{array}{c}
		\stretchto{
			\scaleto{
				\scalerel*[\widthof{\ensuremath{#1}}]{\kern-.5pt\bigwedge\kern-.5pt}
				{\rule[-\textheight/2]{1ex}{\textheight}} 
			}{\textheight} %
		}{0.5ex}\\           
		#1\\                 
		\rule{-1ex}{0ex}
	\end{array}
}
\newcommand{\norm}[1]{\left\lVert#1\right\rVert}
\def\R{\mathbb{R}}
\def\C{\mathbb{C}}
\def\R{\mathbb{R}}
\def\1{\mathbbm{1}}
\def\c{\mathcal{C}}
\def\d{\mathcal{D}}
\def\f{\mathcal{F}}
\def\a{\mathcal{A}}
\newcommand{\bs}{\begin{eqnarray*}}
	\newcommand{\es}{\end{eqnarray*}}
\newcommand{\bx}{\mathbf{x}}
\newcommand{\bpi}{\mbox{\boldmath $\pi$}}
\newcommand{\bmu}{\mbox{\boldmath $\mu$}}
\newcommand*{\defeq}{\mathrel{\vcenter{\baselineskip0.5ex \lineskiplimit0pt
			\hbox{\scriptsize.}\hbox{\scriptsize.}}}%
	=}
\def\bb{\mathbf{b}}
\def\R{\mathbb{R}}
\def\bU{\mathbf{U}}
\def\C{\mathbb{C}}
\def\R{\mathbb{R}}
\def\1{\mathbbm{1}}
\def\c{\mathcal{C}}
\def\be{\mathbf{e}}
\def\d{\mathcal{D}}
\def\a{\mathcal{A}}
\def\f{\mathbf{f}}
\newcommand{\beq}{\begin{equation}}
\newcommand{\eeq}{\end{equation}}
\newcommand{\beqs}{\begin{equation*}}
\newcommand{\eeqs}{\end{equation*}}
\newcommand{\beqa}{\begin{eqnarray}}
\newcommand{\eeqa}{\end{eqnarray}}
\newcommand{\beqas}{\begin{eqnarray*}}
\newcommand{\eeqas}{\end{eqnarray*}}
\newcommand{\integer}{\hbox{N \kern -.em I}\ }  
\newcommand{\real}{\hbox{R \kern -.2em I}\ }  
\newcommand{\numbersetmm}{\,\hbox{\scriptsize M  \kern -.2em I}\,\ }
\newcommand{\numbersetm}{\,\hbox{M \kern -.2em I}\,\,\ }
\newcommand{\bo}{{\mathbf 0}}
\newcommand{\bv}{{\mathbf v}}
\newcommand{\bw}{{\mathbf w}}
\newcommand{\ep}{\epsilon}
\newcommand{\bLam}{\mbox{\boldmath $\Lambda$}}
\newcommand{\pr}{\partial}
\newcommand{\bA}{\mbox{\boldmath $A$}}
\newcommand{\bphi}{\mbox{\boldmath $\Phi$}}
\newcommand{\bI}{\mbox{\boldmath $I$}}
\newcommand{\bK}{\mbox{\boldmath $K$}}
\newcommand{\bq}{\mbox{\boldmath $q$}}
\newcommand{\btau}{\mbox{\boldmath $\tau$}}
\newcommand{\goto}{\rightarrow}
\newcommand{\B}{\mathbf}
\def\R{{\mathbb{R}}}
\def\C{{\mathbb{C}}}
\def\bA{\mathbf{A}}
\def\bB{\mathbf{B}}
\def\bU{\mathbf{U}}
\def\f{\mathbf{f}}
\def\bg{\mathbf{g}}
\def\bq{\mathbf{q}}
\def\bs{\mathbf{s}}
\def\by{\mathbf{y}}
\def\bn{\mathbf{n}}
\def\bx{\mathbf{x}}
\def\bu{\mathbf{u}}
\def\bv{\mathbf{v}}
\def\bo{\mathbf{0}}
\def\bI{\mathbf{I}}
\def\R{\mathbb{R}}
\title{Integral Representation of Hydraulic Permeability} 
\author{Chuan Bi}
\author{M. Yvonne Ou}
\author{Shangyou Zhang}
\affil{Department of Mathematical Sciences, University of Delaware, Newark, DE 19716, USA}
\affil[ ]{\textit {bichuan@udel.edu,  mou@udel.edu, szhang@udel.edu}}
\begin{document}

\maketitle 
Version of: \today

\begin{abstract}
In this paper, we show that the permeability of a porous {material} \cite{tartar1980incompressible} and that of a bubbly fluid 
\cite{Lipton_Avellaneda_1990} are limiting cases of the complexified version of the two-fluid models posed in  \cite{Lipton_Avellaneda_1990}. We assume the viscosity of the inclusion fluid is $z\mu_1$ and the viscosity of the hosting fluid is $\mu_1\in \mathbb{R}^+$, $z\in\mathbb{C}$. The proof  is carried out by the construction of solutions for large $|z|$ and small $|z|$ with an iteration process similar to the one used in  \cite{bruno1993stiffness,golden1983bounds} and the analytic continuation.  Moreover, we also show that for a fixed microstructure, the permeabilities of these three cases share the same integral representation formula (IRF) \eqref{IRF_Ks_prime} with different values of contrast parameter $s:=1/(z-1)$, as long as $s$ is outside the interval  $[-\frac{2E_2^2}{1+2E_2^2},-\frac{1}{1+2E_1^2}]$, where the positive constants $E_1$ and $E_2$ are the extension constants that depend only on the geometry of the periodic pore space of the material.
\end{abstract}

Keywords: {hydraulic permeability, Stokes equations, Composite materials, Integral representation formula, Stieltjes class}

Classification: {35Q35, 35Q70}

\maketitle

\section{Introduction}
Darcy's law, which was first proposed by {H.} Darcy in 1856 \cite{darcy1856fontaines} based on experimental observation of water flowing through beds of sand, describes the relationship between the spontaneous flow discharge rate of steady state through a porous medium, the viscosity of the fluid, and the pressure drop over a distance. 
Later, theoretical/mathematical derivations of Darcy's law were presented in many works,  e.g. M. Poreh et. al \cite{poreh1965analytical}, S.P. Neuman\cite{neuman1977theoretical}, {E.} Sanchez-Palencia\cite{Sanchez-Palencia_1980}, J.L. Lions \cite{lions1981some}, J.B. Keller\cite{keller1980darcy} and J-L Auriault et al \cite{auriault1985dynamics-of-por}, just to name a few. 

In the setting of a periodic pore microstructure, as the period goes to zero, the convergence to the Darcy's law  of the Stokes system with no-slip boundary condition posed on {the} boundary of the pore space  was proved by  L. Tartar using  the energy method \cite{tartar1980incompressible}. G. Allaire implemented the two-scale convergence method introduced by {G.} Nguetseng\cite{nguetseng1989general} to derive the Darcy's law and show the convergence \cite{allaire1992homogenization,Allaire2010lecture}. Prior to the proof of Darcy's law in the {'80s}, {H.} Brinkman\cite{brinkman1949calculation} studied the viscous force exerted by a flowing flow on a dense swarm of particles by adding a diffusion term to the Darcy's law so as to take into account the transitional flow between boundaries. {H.} Brinkman's method was further studied in \cite{tam1969drag, saffman1971boundary, lundgren1972slow}. In the case of a porous {material} where the solid region is much smaller than the fluid part, {T.} Levy \cite{levy1983fluid} and {E.} Sanchez-Palencia\cite{Sanchez-Palencia_1980} proposed the same form of Darcy's law but with a different representation of {the} permeability tensor $\bK$. Later on, {G.} Allaire\cite{allaire1991continuity} showed the continuity of the transition between the two forms of Darcy's laws by considering various ratios between the size of the solid inclusion and the size of {the} separation. Moreover, instead of considering the porous {materials } as a periodic structured material, {A.} Beliaev \cite{beliaev1996darcy} considered the porous {materials} as a random and stochastically homogeneous material and deduced the same Darcy's law. {G.} Allaire \cite{allaire1989homogenization} generalized the homogenization to handle the more realistic micro-geometries of the porous medium where both the solid part and the fluid part are connected. Furthermore, in terms of the the fluid-solid interface conditions, a slip boundary condition is considered by {G.} Allaire in \cite{allaire1989homogenization,cioranescu1996homogenization}. 
In the case of the fluid flow through a porous medium subject to a time-harmonic pressure gradient, the permeability depends on the frequency and is referred to as the dynamic permeability. The theory of dynamic permeability is established \cite{avellaneda1991rigorous,johnson1987theory, auriault1985dynamics-of-por, biot1962mechanics} and further developed by {M.-Y.} Ou \cite{ou2014reconstruction}.

{The goal of this paper is to study how the permeability tensor derived from the homogenization approach for porous {materials} \cite{tartar1980incompressible, tice2014} depend on the microstruture of the pore space}. Details of this will be presented in Section \ref{def_perm}.  

The main tool we use will be the integral representation formula (IRF) for composite materials. Composite materials are materials made from more than one constituent materials with different physical or chemical properties. The effective properties {of composites}, such as elasticity, conductivity and permeability are of great interest in different application fields. {Homogenization theory for composite materials has been} extensively studied in \cite{bensoussan2011asymptotic,milton2002theory, Shamaev_Yosifian_2009}.  Mathematically, for a two-component composite material, the{microstructural} information is carried into the analytical formulation of the effective properties of the composite.  Bergman pioneered the study of analyticity of the effective dielectric constant \cite{bergman1978dielectric}, and in terms of integral representation of effective material properties,  a rigorous basis of integral representation of the effective conductivity is established by {K.} Golden and {G.} Papanicolaou \cite{golden1983bounds}, {the} effective elastic constants by {Y.} Kantor and {D.} Bergman \cite{kantor1982elastostatic}, {the} effective diffusivity in convection-enhanced diffusion was derived by {M.} Avellaneda and {A.} Majda \cite{avellaneda1989stieltjes,avellaneda1991integral}. Further enlargements of the domain of analyticity of the IRF of elasticity tensor to the {case} where one phase is  {a} void or {a} hard inclusion is studied by Bruno and Leo  \cite{bruno1991effective,bruno1993stiffness}.

Unlike the problem setup for calculating effective material properties such as effective conductivity, elasticity, and diffusivity, where the physical property of interest is well defined both in the micro-scale and the macro(homogenized)-scale, the permeability of porous {material} is by definition an effective property and hence {it} makes sense only in the macro-scale. To overcome this difficulty, we {consider a porous material as the limit case of a two-fluid mixture. }

{Specifically, }we will start with the two-fluid mixture problem studied in \cite{Lipton_Avellaneda_1990}, where the effective property is called the \emph{self-permeability}. We will derive the IRF for the {self-}permeability and show that the permeability for a porous {material} is equal to the limit of the self-permeability when the viscosity of one phase becomes infinite. Similar to the hard/soft inclusion case studied in \cite{bruno1991effective,bruno1993stiffness}, we will extend the domain of analyticity of the IRF to $\infty$ and to $0$ by an iterative process. As a result, the IRF derived here is valid for porous {materials} with a solid skeleton as well as for fluid-bubble mixtures. Hence it provides a theoretical connection between the permeability defined in \cite{tartar1980incompressible} and the self-permeability for {the} bubbly fluid studied in \cite{Lipton_Avellaneda_1990} and any mixture in between these two limiting cases. 

{The paper is organized as follows. The permeability of a porous material is defined in Section \ref{def_perm}.  Section \ref{Section2} starts with the definition of the \emph{self-permeability} $\bK$of a two-fluid mixture and the corresponding cell problem, followed by an analysis of the cell problem and the construction of the solution in the vicinity of the two limiting cases of $z=\infty$ and $z=0$. In Section \ref{Section3}, the IRF of $\bK$ is obtained by applying the theory of matrix-valued Stieltjes functions. In this section, the spectral representation of $\bK$ is also derived. The relationships  between the moments of the measure in the IRF and the geometry of the pore space are derived by comparing these two representations. Section \ref{Section4} presents the numerical solutions of the cell problem of a special pore structure, which validate the theoretical results given in Section \ref{Section3}. }

Einstein summation convention is applied unless stated otherwise.

\subsection{\label{def_perm}Definition of Permeability from Homogenization}
Following the convention of homogenization, the space coordinates for the cell problem in the open unit cell $Q=(0,1)^n$ for $n=2,3$, are denoted by $\by=(y_1, y_2, y_3)$. Let $\Omega$ be a {smooth bounded} open set and $Q$ an open unit cube made of two {open sets} $Q_1$, $Q_2$ { and the interface $\Gamma=\mbox{cl}(Q_1)\cap \mbox{cl}(Q_2)$ with cl($A$) being the closure of a set $A$.}  {Moreover, $\widetilde{Q_i}$ denotes the $Q$-periodic extension of $Q_i$,  $i=1,2$. Following \cite{allaire1989homogenization}, we assume that (1) $Q_1$ and $Q_2$ have strictly positive measures in cl($Q$). (2) The set $\widetilde{Q_i}$ is open with $C^1$ boundary and is locally located on one side of its boundary, $i=1,2$, and $\widetilde{Q_1}$ is connected. (3) $Q_1$ is connected with a Lipschitz boundary. In addition, we  consider the case of inclusion, i.e. $Q_2\cap \partial Q=\emptyset$.}

Consider $\epsilon>0$ much smaller than the size of $\Omega$ and {$\epsilon Q$-}periodically extend $\epsilon Q_1$ in the entire space. $\Omega_\epsilon$ denotes the intersection of $\Omega$ and this  {$\epsilon Q$-}periodically extended structure. In \cite{tartar1980incompressible}, the permeability is derived from the Stokes equation in $\Omega_\epsilon$, which reads: find $\bu^\epsilon\in H_0^1(\Omega_\epsilon)^n$ and $p^\epsilon \in L^2(\Omega_\epsilon)/\mathbb{R}$ such that 
\begin{equation}
\left\{ \begin{split}
-\mu \triangle \bu^\epsilon + \nabla p^{\epsilon}  &= \f \quad \text{ in } \Omega_\epsilon. \\
\text{div} \bu^{\epsilon} &= 0 \quad \text{ in } \Omega_\epsilon\\
\end{split}
\right. 
\label{Tartar_stokes}
\end{equation}
where $\f\in L^2(\Omega)$ is independent of $\epsilon$ and the viscosity $\mu$ is a constant ($\mu$ is set to 1 in \cite{tartar1980incompressible, tice2014}). See Figure \ref{cell} for an example of the unit cube. {Note that the superscript $\epsilon$ is used to signify that the solutions $\bu^\epsilon$ and $p^\epsilon$ depend on $\epsilon$.}
\begin{figure}[h]
    \centering
    \includegraphics[width=3cm, height=3cm]{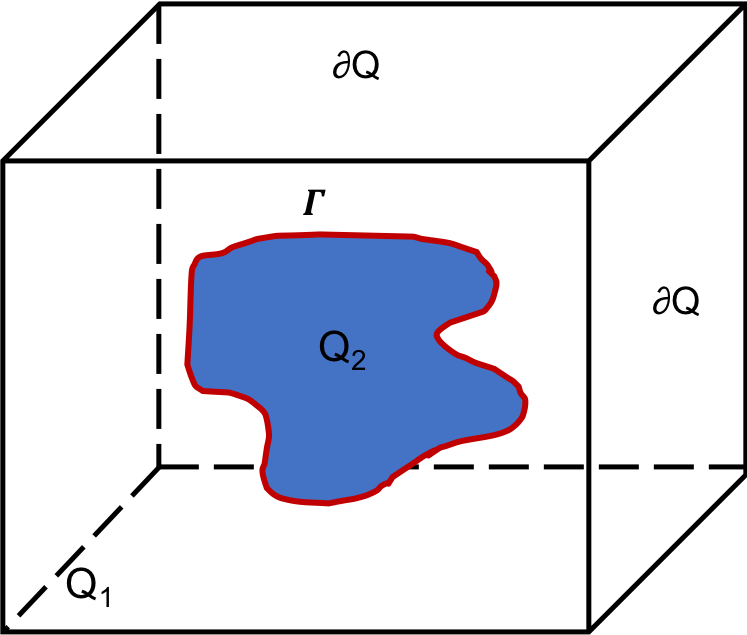}
    \caption{A sample illustration of a periodic cell.}
    \label{cell}
\end{figure}
To be able to prove {the} convergence of $(\bu^\epsilon,p^{\epsilon})$ as $\epsilon\rightarrow 0$, it is necessary to extend these solutions from $\Omega_\epsilon$ to $\Omega$ so they are defined in the same spatial domain.  In \cite{tartar1980incompressible, tice2014}, $\bu^\epsilon$ was extended by zero and $p^\epsilon$ by a properly defined extension operator with their extensions denoted by $\hat{\bu^\epsilon}$ and $\hat{p^\epsilon}$, respectively. As $\epsilon\rightarrow 0$, 
\[
\frac{\hat{\bu^\epsilon}}{\epsilon^2} \rightharpoonup \bU \mbox{ weakly in }L^2(\Omega)^n, \, \text{div}\, \bU=0, \bU\cdot \bn=0 \mbox{ on } \Gamma, \mbox{ and }  \hat{p^\epsilon} \rightarrow p \mbox{ in } L^2(\Omega)/\mathbb{R}
\]
and the limit functions {satisfy} the following Darcy's law \cite{tartar1980incompressible}
\begin{equation}
    \bU = \frac{\bK^{(D)}}{\mu}(\f -\nabla p)
\end{equation}
where the permeability tensor $\bK^{(D)}$ is defined as 
\begin{equation}
    K_{ij}^{(D)} =   \int_{Q_1} \bu_D^{j}\cdot \be_i d\by,\, i,j=1,\cdots n
    \label{K_def}
\end{equation}
with $\be_i$ denoting the unit vector in the $i$-th direction and  $\bu_D^{j}$ the unique solution of the following {boundary value problem}
\begin{equation}
\left\{
\begin{split}
	\mu\Delta_{\by} \bu_D^{j} - \nabla_{\by}p^{j}   = -\be_j  \quad &\text{in } Q_1\\
	\text{div}_{\by} \bu_D^{j} = 0 \quad &\text{in } Q_1\\
	\bu_D^{j} = \bo \quad &\text{on } \Gamma
\end{split}
\label{classical_cell_prob}	
\right.
\end{equation}
in the space $ \mathring{H}(Q_1): = \left\lbrace \bv: \bv\in H^1(Q_1)^n\biggr\vert \; \text{div}_{\by} \bv = 0, \bv\vert_{\Gamma} = \bo, Q \text{-periodic}  \right\rbrace$. {Note that the subscript $i$ of $\bu$ and $p$ signifies the solutions corresponding to the force term $\be_i$.}

Since $\mu$ is set to 1 in \cite{tartar1980incompressible, tice2014}, the permeability $\bK$ presented there is related to $\bK^{(D)}$ by $\bK^{(D)}={\bK}/{\mu}$.
For future analysis, we will derive here the quadratic form representation of the permeability.  We start by observing that for incompressible fluid, we have 
\[
\triangle \bu= \mbox{div} (\nabla \bu+\nabla^T \bu)
\]
Therefore, \eqref{K_def} can be expressed as
\begin{equation}
    K_{ij}^{(D)} =    \int_{Q_1} \bu^{j}_{{D}}\cdot \left( \nabla_{\by}p^{i} - {\mu}\Delta_{\by} \bu^{i}_{{D}} \right) d\by = \int_{Q_1} {\mu}\nabla_{\by} \bu^{j}_{{D}} : \left(\nabla_{\by} \bu^{i}_{{D}}+\nabla_\by^T \bu^i_{{D}}\right) d\by
    \label{K_tartar_form}
\end{equation}
after applying Divergence theorem, periodicity of $\bu$, $p$ and no-slip conditions on $\Gamma$. Here we have used the Frobenius inner product of matrices $\bA : \bB  = \sum_{i,j = 1} A_{ij}B_{ij}$.
In terms of the usual notion of the symmetric part and the antisymmetric part of vector field $\nabla\bu$
\begin{eqnarray}
e(\bu) := \frac{1}{2} \left(\nabla \bu + \nabla^T \bu\right)\label{sym},\,\,\tilde{e}(\bu):= \frac{1}{2} \left(\nabla \bu - \nabla^T \bu\right){,}\label{antisym}
\end{eqnarray}
the right-hand side of equation \eqref{K_tartar_form} becomes $\int_{Q_1} 2\mu (e(\bu_D^j)+\tilde{e}(\bu_D^j)) : e( \bu_D^i )d\by =\int_{Q_1} 2\mu e(\bu_D^j) : e(\bu_D^i) d\by$
because the Frobenius product of a symmetric matrix and an antisymmetric matrix must be 0. Therefore we have the quadratic form of permeability tensor $\bK^{(D)}$ 
\begin{equation}
    K_{ij}^{(D)} =\int_{Q_1} 2\mu e(\bu_D^j) : e(\bu_D^i) d\by.
    \label{quadratic_form_permeability}
\end{equation}
%
%
\section{\label{two-fluid}Approximation of flow in porous medium by a two-phase Stokes flow}
\label{Section2}
In this section, we consider the system for porous {materials} \eqref{classical_cell_prob} as one of the limiting cases of the two-fluid problem described below, which is the same as the one studied in \cite{Lipton_Avellaneda_1990} with the exception that the fluid viscosity here can be complex-valued. It is easy to check that the homogenization process in \cite{Lipton_Avellaneda_1990} stays valid after making small modifications to accommodate the complex valued viscosity described below.  

Let $\Omega$, $Q$ and $\epsilon$ be the same as in Section \ref{def_perm}. $Q_2$ is still the inclusion in the periodic cell. Consider the ${\epsilon Q}$-periodic extension of $\epsilon Q_1$ (${\epsilon}Q_2$) and denote by $\Omega_{1{\epsilon}}$ ($\Omega_{2\epsilon}$) its intersection with $\Omega$. We note that $\Omega_{1{\epsilon}}$ (region of the hosting fluid)  is the same as $\Omega_\epsilon$ in the previous section. Suppose $\Omega_{1{\epsilon}}$ is occupied by fluid with viscosity $\mu_1>0$ and $\Omega_2$ by fluid with viscosity $z\mu_1$ with $z\in \C$. The interface $\tilde{\Gamma} = \partial \Omega_{1{\epsilon}} \cap  \partial \Omega_{2{\epsilon}}$ is such that $\Omega_{1{\epsilon}}\cup  {\tilde{\Gamma}\ \cup\ }{\Omega_{2{\epsilon}}} = \Omega$. 
%
%
{ For the ease of notation, we define the stress tensor $\boldsymbol{\tau}(\bu,\bmu)$ of a fluid with viscosity $\mu$, velocity field $\bu$ and pressure field $p$ as 
\begin{equation}
	\boldsymbol{\tau}(\bu,p, \mu) = 2{\mu} e(\bu) - p\B{I}, \quad \mbox{$\B{I}$ is the identity matrix.}
	\label{def_total_stress}
\end{equation}
}
{Let $\chi_i$ be the characteristic function of $\Omega_{\epsilon i}$, $i=1,2$, consider the viscosity function}
\beq
{\xi}^{{\epsilon}}(\bx;z) = (\chi_2(\bx) z \mu_1 + \chi_1(\bx) \mu_1), \quad z \in \C.
\label{piecewise_mu}
\eeq
The two-fluid problem is {given by} the following Stokes system
\begin{equation}
\left\{ \begin{split}
\text{div} \left( 2{{\xi}}^{\epsilon}(\bx;z)e(\bu^{\epsilon}) \right) - \nabla p^{\epsilon}  &= -\f \quad \text{ in } \Omega\backslash \tilde{\Gamma} \label{global_first_form_std_notation}\\
\text{div} \bu^{\epsilon} &= 0 \quad \text{ in } \Omega\\
\bu^{\epsilon} &= \bo \quad \text{ on } \partial\Omega\\
\llbracket \bu^{\epsilon} \rrbracket=0,\,\bu^{\epsilon} \cdot \bn &= 0 \quad \text{on } \tilde{\Gamma}\\
\llbracket \bpi \rrbracket\cdot \bn  = \left( \llbracket \bpi \cdot \bn\rrbracket \cdot \bn \right) \bn &\equiv\llbracket \bpi\cdot \bn\rrbracket- \bn \times \bn \times \llbracket \bpi\cdot \bn\rrbracket
\quad \text{on } \tilde{\Gamma}
\end{split}
\right. 
\end{equation}
where {$\bpi=\btau(\bu^\epsilon,p^\epsilon, \boldsymbol{\xi}^\epsilon)$}, $\f$ is a square integrable momentum source {independent of $\epsilon$}, $\llbracket\cdot \rrbracket$ the jump across the interface $\Tilde{\Gamma}$, {and $\bn$ is the outward unit normal of $\partial \Omega_{2\epsilon}$.} The second jump condition in \eqref{global_first_form_std_notation} means the traction can only jump in the normal direction. {Also note that the superscript $\epsilon$ is used to signify that the solutions $\bu^\epsilon$ and $p^\epsilon$ depend on $\epsilon$.}

It is shown in \cite{Lipton_Avellaneda_1990} that as $\epsilon \to 0$, $\bu^\epsilon$ and the properly normalized $p^\epsilon$, which is denoted by $\hat{p}^\epsilon$, converge as follows
\begin{equation*}
    \frac{{\bu^{\epsilon}}}{\ep^2} \to \bu^0 \quad \text{weakly in }L^2(\Omega)^n,\;\hat{p^{\ep}} \to P \quad \text{strongly in }L^2(\Omega)/\R
\end{equation*}
where $\bu^0$ and $P$ satisfy the homogenized system:
\begin{equation}
    \left\lbrace\begin{split}
        \bu^0 &= -\bK(\nabla P - \B{f})\quad \text{in }\Omega \label{self_K_def}\\
        \text{div } \bu^0 &= 0\quad \text{in }\Omega\\
    \end{split}\right.\end{equation}
where the components of $\bK$, {which is referred to as the \emph{self-permeability} in \cite{Lipton_Avellaneda_1990}}, is defined as
\begin{equation}
K_{ij}(z) \defeq  \int_{Q} {\bu^j}\cdot \be_i d\by,\, {i,j=1,\dots,n}
\label{permeability_def}
\end{equation}
with $\bu^{{i}}$ being the unique solution to the cell problem posed in the function space $H(Q)$, which is defined in \eqref{space_HQ},
\begin{equation}
\left \{ \begin{split} \text{div}_{\by} \left( 2{\bmu}(\by;z) e( \bu^i) - p^i \bI \right)  + \be_i &= \bo \quad \text{in } Q_1 \cup Q_2\\
\llbracket \bpi \rrbracket\cdot\bn  &= \left( \llbracket \bpi\cdot \bn\rrbracket \cdot \bn \right) \bn \text{ on } \Gamma
\end{split}\right.
\label{cell_prob}
\end{equation}
{where $\bmu(\by;z)=\mu_1 \chi_1(\by)+z\mu_1 \chi_2(\by)$ with $\chi_m$ being the characteristic functions of $Q_m$, $m=1,2,$ and $\bpi=\btau(\bu^k,p^k,\bmu)$, {cf. \eqref{def_total_stress}}. Note that the superscript $i$ is used to signify that $\bu^i$ and $p^i$ are solutions to the cell problem \eqref{cell_prob} with the force term $-\be_i$, $i=1,\dots,n$. }
\subsection{\label{C_pro}Function Spaces}
Let $\mathcal{R}(Q_2)$ denote the space of rigid body displacement{s} in $Q_2$, i.e. $\bu = \bA\by + \bb$ with constant skew-symmetric matrix $A$ and constant vector $\bb$ in $Q_2$. We start with the space of admissible functions for the velocity
\begin{align}
H(Q) &:= \left\{ \bv: \bv\in H^1(Q_1\cup Q_2)^n \biggr\vert \; \text{div}_{\by} \bv = 0,\; \bv \cdot \bn = 0 \text{ in } H^{-\frac{1}{2}}(\Gamma),\right.\nonumber \\
&\qquad \left.{} \llbracket \bv \rrbracket_{\Gamma} = \bo,\; (\bv,\B{\eta})_{H^1(Q_2)}=0, \forall \B{\eta}\in \mathcal{R}(Q_2),\,\bv \text{ is } Q \text{- periodic}  \right\} 
\label{space_HQ}
\end{align}
{where $\bn$ is the outward unit normal of $\partial Q_2$. $H
(Q)$ is} endowed with {the} inner product
\begin{equation}
(\bu,\bv)_{Q} = \int_{Q} 2\mu_1e(\bu) : \overline{e(\bv)} d\by.
\label{inner_product_def}
\end{equation}
The induced norm is denoted by $\norm{\bu}_{Q}^2 :=  (\bu,\bu)_Q$. Note that we have $H(Q)\cap \mathcal{R}(Q)=\{\B{0}\}$ because $\bA = 0$ due to the $Q$-periodicity and $\bu \cdot \bn = 0$ implies $\bb = \bo$. 
We observe that if $\bu \in H(Q)$ then $\bu \in H^1(Q)^n$ {by the following argument}. Obviously, $\bu \in L^2(Q)^n$. To prove $\frac{\partial u_i}{\partial y_j} \in L^2(Q)$ for $i,j = 1,2,3$, let $\phi$ be any {$C^\infty$} test function compactly supported in $Q$ and $h$ be the $i$-th component $u_i$ for any $i$. Then
\begin{equation*}
    \int_{Q} h \nabla \phi d\by = - \left( \int_{Q_1\cap \text{Supp}(\phi)} \phi\nabla h d\by + \int_{Q_2\cap \text{Supp}(\phi)} \phi\nabla h d\by\right)
\end{equation*}
here we used $\llbracket h \rrbracket = \bo$. Now we can define a candidate function $\bg$ such that
\begin{equation}
    \begin{split}
        \bg\vert_{Q_i} &:= \nabla h\vert_{Q_i},\,i=1,2
    \end{split}
\end{equation}
then {clearly} $\bg\in L^2(Q)^n$ {and} $\left< h,\nabla \phi \right> = -\left< g,\phi \right>$, where $\left<\cdot \right>$ denotes the usual $L^2$ inner product. Therefore  ${h} \in H^1(Q)$ {and hence $u_i \in H^1(Q), \ i=1,\dots,n$}.

Next, we show that $\| \cdot \|_Q$ is equivalent to the usual $H^1$ norm, i.e.{,} there exist constants $B_1$ and $B_2$ such that
\begin{equation}
    B_1 \norm{\bu}_{H^1(Q)} \leq \norm{\bu}_{Q} \leq B_2 \norm{\bu}_{H^1(Q)}
    \label{equi_norm}
\end{equation}

Because $H^1(Q)\cap \mathcal{R}(Q)=\{0\}$, by Theorem 2.5 in \cite{Shamaev_Yosifian_2009}, there exists a Korn's constant $C_1$ such that
\begin{equation}
    C_1 \norm{\bu}_{H^1(Q)} \leq \frac{1}{\sqrt{2\mu_1}} \norm{\bu}_{Q}
\end{equation}
where $C_1$ depends only on $Q$. Therefore, we can take $B_1=\sqrt{2\mu_1}C_1$. To emphasize the dependence on $Q$, we will write it as $B_1(Q)$. On the other hand, according to the orthogonal decomposition that $\nabla \bu = e(\bu) + \tilde{e}(\bu)$, see \eqref{sym},
$$ \norm{\bu}_{H^1(Q)}^2 \geq \norm{\nabla \bu}_{L^2(Q)}^2 = \norm{e(\bu)}_{L^2(Q)}^2 + \norm{\tilde{e}(\bu)}^2 \geq \norm{e(\bu)}_{L^2(Q)}^2 =\frac{1}{2\mu_1} \|\bu \|_Q^2 $$
therefore $B_2 = \sqrt{2\mu_1}$. The reason for introducing the $H(Q)$-norm is that the self-permeability in  \eqref{permeability_def} can be represented in terms of the inner product. More specifically, {using  \eqref{permeability_def},  \eqref{cell_prob}  and the fact that $\overline{\be_i}=\be_i$}, by a calculation similar to \eqref{K_tartar_form} and taking into account the interface condition $\bu\cdot \bn=0$ and the jump conditions in \eqref{cell_prob}, \eqref{permeability_def} can be expressed in the following form
\begin{equation}
\begin{split}
    K_{ij}(z) 
 =  \int_{Q}2  {\mu}(\by;\overline{z}) \overline{e( \bu^{{i}}(z))} : {e(\bu^{{j}}(z))}  d\by
\end{split} \label{permeability_second_form}  
\end{equation}
and its conjugate transpose $\bK^*:=\overline{\bK^T}$ is
\begin{equation}
    {(K^*)_{ij}}(z) 
    =  \int_{Q}2  {\mu}(\by;{z}) {e( \bu^{{j}}(z))} : \overline{e(\bu^{{i}}(z))}  d\by
\end{equation}
\subsection{Weak solution of the Cell Problem \eqref{cell_prob} }
The weak formulation of the cell problem \eqref{cell_prob} is
\begin{equation}
\int_{Q_1\cup Q_2} 2\mu(\by;z) e( \bu^k) : \overline{e(\bv)} d\by = \int_{Q_1\cup Q_2} \be_k\cdot \bar{\bv} d\by ,\qquad \forall \bv \in H(Q)
\label{variational_form_cell_prob}
\end{equation}
From this, we see that the solutions satisfy the following symmetry
\[
\bu^k(\by;\overline{z})=\overline{\bu^k(\by;z)}
\]

Define the sesquilinear form on $H(Q)$
\begin{equation}
a(\bu,\bv) = \int_{Q_1\cup Q_2}2\mu(\by;z) e( \bu^k) : \overline{e(\bv)}  d\by
\end{equation}
It is clear that $a(\bu,\bv)$ is bounded in $H(Q)$. To check the coercivity, assume $\bu^k \neq 0$ and define the parameter
\begin{equation}
\lambda := \frac{\int_{Q}2 \mu_1\chi_2 e( \bu^k) : \overline{e(\bu^k)} d\by}{\int_{Q}2 \mu_1 e( \bu^k) : \overline{e(\bu^k)} d\by}
\end{equation}
then $0\leq \lambda \leq 1$. We note that  
\begin{eqnarray}
\frac{a(\bu^k,\bu^k) }{\int_{Q}2\mu_1 e( \bu^k) : \overline{e(\bu^k)} d\by}= \lambda z+(1-\lambda)\cdot 1%
\label{coercivity_original_problem}
\end{eqnarray}
and hence as long as  0 is not on the line segment joining $z$ and 1, there exist $\alpha(z):=\min_{0\le \lambda\le 1} |\lambda z+1-\lambda|>0$ such that 
\begin{equation}
\left|a(\bu^k,\bu^k)\right|\ge \alpha(z) \int_{Q}2\mu_1 e( \bu^k) : \overline{e(\bu^k)} d\by=\alpha\| \bu^k\|_Q^2
\end{equation}

Therefore for $z\in \C\backslash\left\lbrace \Re{z}\leq0\right\rbrace $, by {the} Lax-Milgram Lemma \cite[Chapter~2]{brenner2007mathematical}, there exists a unique weak solution $\bu^k\in H(Q)$ to the cell problem \eqref{cell_prob} and with the solution $\bu^k$, we can construct $p^k \in L^2(Q)/ \mathbb{C}$. 

Since $\alpha(z)$ is a continuous function in $z$, the coercivity of the sesquilinear form can be applied to conclude that $\bu^k$ is analytic in $z$ and its $m$-th derivative, $m\ge1$, satisfies the following recursive equation
\begin{equation}
\int_{Q_1\cup Q_2} 2\mu(\by;z) e\left( \frac{d^m\bu^k}{dz^m}\right) : \overline{e(\bv)} d\by = -\int_{Q_2} 2 {m}\mu_1 e\left(\frac{d^{m-1}\bu^k}{dz^{m-1}}\right):e(\overline{\bv}) d\by,\,\forall \bv \in H(Q)
\end{equation} 
As a result, $\bK(z)$ is also analytic for $z\in \C\backslash\left\lbrace \Re{z}\leq0\right\rbrace$. 
To relate the two-fluid problem with $\bK^{(D)}$, we adapt the method used in \cite{bruno1993stiffness} to study the behavior of $\bK(z)$ near $z=\infty$ in the following section.
\subsection{\label{large_z} Analyticity of the Solution for large $|z|$}
Let $w:=\frac{1}{z}$ and consider $Q$-periodic solution in the series form near  $w=0$
\begin{equation}
\bu_\infty(\by;\be,w) := \sum_{k = 0}^{\infty} \bu_k(\by;\be) w^k \mbox{ and } p_\infty(\by;\be,w) := \sum_{k = 0}^{\infty} p_k(\by;\be) w^k
\label{large_ansaz}
\end{equation}
where the $\be$ is an arbitrary constant unit vector. To set up the notation, we denote the restrictions of $\bu_k$, $p_k$ in $Q_2$ (inclusion) and $Q_1$ as $\bu^{in}_k$, $p^{in}_k$  and $\bu^{out}_k$, $p^{out}_k$ respectively and define 
\begin{align}
&\bu_\infty^{in}(\by;\be,w): = \sum_{k = 0}^{\infty} \bu^{in}_k(\by,\be) w^k,\qquad
\bu_\infty^{out}(\by;\be,w): = \sum_{k = 0}^{\infty} \bu^{out}_k(\by,\be) w^k \label{u_out}
\end{align}
By substituting \eqref{large_ansaz} into \eqref{cell_prob} with the viscosity defined in \eqref{piecewise_mu}, taking into account the additional two interface conditions $\bu\cdot\bn=0$ and $\llbracket \bu \rrbracket=0$, followed by equating terms of the same order with respect to $w$,  we arrive in the following equations
in $Q_1$:
	\beqa
	&& O(w^0):\qquad
	\text{div}_{\by} \left( 2\mu_1 e(\bu_0^{out}) - p_0^{out}\bI \right)  = -\be
	\label{order_0_Q1}\\
&&O(w^k):\qquad
	\text{div}_{\by} \left( 2\mu_1 e(\bu_k^{out}) - p_k^{out}\bI \right)  = \bo  \mbox{ for } k\geq 1
	\label{order_k_Q1}
	\eeqa
and in $Q_2$:
	\beqa
	&&O(w^{-1}):\qquad \text{div}_{\by} \left( 2\mu_1 e(\bu_0^{in}) \right)  = \bo
	\label{order_m1_pde_Q2}
\\
&&O(w^{0}):\qquad	\text{div}_{\by} \left( 2\mu_1 e(\bu_1^{in}) - p_0^{in}\bI \right)  = -\be 
	\label{order_0_Q2}\\
&&O(w^{k}):\qquad
\text{div}_{\by} \left( 2\mu_1 e(\bu_{k+1}^{in}) - p_k^{in}\bI \right)  = \bo \mbox{ for } k\geq 1
	\label{order_k_Q2}
\eeqa
and the following interface conditions on $\Gamma$
\beqa
O(w^{-1}):&&\ 
	2\mu_1(e(\bu_0^{in})\cdot \bn)|_\Gamma = C(\by) \bn \text{ for some function }C(\by) 
	\label{order_m1_interface_cond}\\
 O(w^k),\, k\ge 0:&&\ 
	 \left( \left( 2\mu_1 e(\bu_k^{out}) - p_k^{out}\bI \right) - \left( 2\mu_1 e(\bu_{k+1}^{in}) - p_k^{in}\bI \right)  \right) \bn \label{order_k_interface_cond}  \\
	&&  = \left\{ \left[\left( \left( 2\mu_1 e(\bu_k^{out}) - p_k^{out}\bI \right) - \left( 2\mu_1 e(\bu_k^{in}) - p_k^{in}\bI \right)  \right) \bn \right]\cdot \bn\right\} \bn,
\nonumber	\\
&&\bu^{in}_k \cdot \bn = \bu^{out}_k \cdot \bn = 0 \mbox{ and }\bu^{in}_k = \bu^{out}_k
	\label{interface_cond_3}
\eeqa
We introduce the following spaces, $i=1,2$
\begin{eqnarray*}
&&H(Q_1) = \left\lbrace \bv: \bv\in H^1(Q_1)^n\biggr\vert \; \text{div}_{\by} \bv = 0, \bv \cdot \bn = 0 \text{ on } \Gamma, Q \text{-periodic}  \right\rbrace \label{HQ_1}\\
&&H(Q_2) = \left\lbrace \bv: \bv\in H^1(Q_2)^n\biggr\vert \; \text{div}_{\by} \bv = 0, \bv \cdot \bn = 0 \text{ on } \Gamma, \, (\bv,\mathcal{R}(Q_2))_{H^1(Q_2)}=0\right.\\ &&\left.\qquad \qquad , Q \text{-periodic}  \right\rbrace \label{HQ_2}\\
&&\mathring{H}(Q_i)= \left\lbrace \bv: \bv\in H^1(Q_i)^n\biggr\vert \; \text{div}_{\by} \bv = 0, \bv\vert_{\Gamma} = \bo, Q \text{-periodic}  \right\rbrace \label{Hring_i} \subset H(Q_i),\,\\
&&L(Q_i)/ \mathbb{C} = \left\lbrace p: p\in L^2(Q_i), \int_{Q_i} p(\by)d\by = 0, Q \text{-periodic},\right\rbrace
\end{eqnarray*}
Note that $H(Q_1)\cap \mathcal{R}(Q_1)=\{\bo\}$ because $\partial Q \subset \partial Q_1$. For $H(Q_2)$, the boundary condition $\bu\cdot\bn=0$ implies $H(Q_2)\cap \mathcal{R}(Q_2)=\{\bo\}$ because of the extra condition $(\bv,\mathcal{R}(Q_2))_{H^1(Q_2)}=0$\cite{Shamaev_Yosifian_2009}.
Therefore, $H(Q_i)$ and $\mathring{H}(Q_i)$ are equipped with inner product $(\bu,\bv)_{Q_i} = \int_{Q_i} 2\mu_1e(\bu) : \overline{e(\bv)} d\by$ and Korn's inequalities are valid in $H(Q_i)$, $i=1,2$.
\begin{lemma}
	Let $Q_2$ be a connected, open bounded set such that $\pr Q_2 \cap \pr Q=\emptyset$ and  $\pr Q_2$ is in $\c^{k,\sigma}$ {, $k,\sigma\ge 0$, $k+\sigma\ge2$}. For any vector field $\bu^{in} \in H(Q_2)$, there exists a unique weak solution $\bu^{out}(\by;\B{f}^{out})\in H(Q_1)$ that satisfies the following system 
	\begin{equation}
	\left\lbrace
	\begin{split}
	    \text{div}_{\by} \left( 2\mu_1 e(\bu^{out}) - p^{out}\bI \right) &= \mathbf{f}^{out} \quad \text{ in } Q_1\\
	    \bu^{out} &= \bu^{in} \quad \text{ on }\Gamma
	\end{split}
	\right.
	\label{lemma_1_eqn}
	\end{equation}
	where in our context, $\mathbf{f}^{out} = \bo \text{ or } \mathbf{f}^{out} = -\be$, a constant unit vector. Moreover, 
	\beq
 \norm{\bu^{out}}_{Q_1} \leq \frac{1}{B_1(Q)} \norm{\mathbf{f}^{out}}_{L^2(Q_1)} + 2E_1 \norm{\bu^{in}}_{Q_2}.	
	\eeq
	where the positive constants $B_1(Q_{{1}})$ is defined in \eqref{equi_norm} and $E_1\ge 1$ depends only on $Q_1$ and $Q_2$.
	\label{lemma_1} 
\end{lemma}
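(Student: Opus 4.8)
The plan is to treat \eqref{lemma_1_eqn} as a Stokes problem in $Q_1$ with inhomogeneous Dirichlet data on $\Gamma$ and periodic data on $\partial Q$, and to reduce it to a problem with \emph{homogeneous} data on $\Gamma$ by subtracting off a divergence-free lift of the interface trace. The weak form I would work with is: find $\bu^{out}\in H(Q_1)$ with $\bu^{out}|_\Gamma=\bu^{in}|_\Gamma$ such that
\[
\int_{Q_1}2\mu_1\, e(\bu^{out}):\overline{e(\bphi)}\,d\by=-\int_{Q_1}\mathbf{f}^{out}\cdot\overline{\bphi}\,d\by,\qquad \forall\,\bphi\in\mathring{H}(Q_1),
\]
where the pressure term has dropped out because the test fields are divergence free and vanish on $\Gamma$; as in the construction following \eqref{variational_form_cell_prob}, the pressure $p^{out}\in L^2(Q_1)/\C$ is recovered a posteriori from the de Rham characterization of the functional that annihilates $\mathring{H}(Q_1)$.

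The technical heart of the argument is to produce a field $\bw\in H(Q_1)$ with $\bw|_\Gamma=\bu^{in}|_\Gamma$ and $\norm{\bw}_{Q_1}\le E_1\norm{\bu^{in}}_{Q_2}$, the constant $E_1\ge 1$ depending only on $Q_1,Q_2$. First I would lift the trace $\bu^{in}|_\Gamma\in H^{1/2}(\Gamma)$ to a field $\widetilde{\bu}\in H^1(Q_1)^n$ supported in a neighborhood of $\Gamma$ disjoint from $\partial Q$ (possible precisely because $\partial Q_2\cap\partial Q=\emptyset$), with $\norm{\widetilde{\bu}}_{H^1(Q_1)}\le C\,\norm{\bu^{in}}_{H^1(Q_2)}$; by construction $\widetilde{\bu}=\bo$ on $\partial Q$, so it is trivially $Q$-periodic. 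In general $\mbox{div}\,\widetilde{\bu}\neq 0$, so I would correct it with the Bogovskii operator on $Q_1$: since $\widetilde{\bu}\cdot\bn=\bu^{in}\cdot\bn=0$ on $\Gamma$ and $\widetilde{\bu}=\bo$ on $\partial Q$, the flux compatibility $\int_{Q_1}\mbox{div}\,\widetilde{\bu}\,d\by=0$ holds, yielding $\mathbf{z}\in H^1_0(Q_1)^n$ with $\mbox{div}\,\mathbf{z}=\mbox{div}\,\widetilde{\bu}$ and $\norm{\mathbf{z}}_{H^1(Q_1)}\le C\,\norm{\mbox{div}\,\widetilde{\bu}}_{L^2(Q_1)}$. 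Then $\bw:=\widetilde{\bu}-\mathbf{z}$ is divergence free, equals $\bu^{in}$ on $\Gamma$ (so $\bw\cdot\bn=0$ there), vanishes on $\partial Q$ (hence periodic), i.e. $\bw\in H(Q_1)$; combining the two bounds with Korn's inequality on $Q_2$ and the norm equivalence \eqref{equi_norm} gives $\norm{\bw}_{Q_1}\le E_1\norm{\bu^{in}}_{Q_2}$, and $E_1\ge1$ may be arranged.

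With $\bw$ fixed, I would set $\bv:=\bu^{out}-\bw\in\mathring{H}(Q_1)$, which must solve
\[
\int_{Q_1}2\mu_1\, e(\bv):\overline{e(\bphi)}\,d\by=-\int_{Q_1}\mathbf{f}^{out}\cdot\overline{\bphi}\,d\by-\int_{Q_1}2\mu_1\, e(\bw):\overline{e(\bphi)}\,d\by,\qquad\forall\,\bphi\in\mathring{H}(Q_1).
\]
The left-hand sesquilinear form is exactly $(\bv,\bphi)_{Q_1}$, which is bounded and, because $\mathring{H}(Q_1)\cap\mathcal{R}(Q_1)=\{\bo\}$ and Korn's inequality holds on $Q_1$, coercive; the right-hand side is a bounded antilinear functional of $\bphi$. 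Lax--Milgram then gives a unique $\bv\in\mathring{H}(Q_1)$, hence a unique $\bu^{out}=\bv+\bw\in H(Q_1)$. Uniqueness of $\bu^{out}$ is independent of the chosen lift: any two solutions differ by an element of $\mathring{H}(Q_1)$ satisfying the homogeneous weak equation, which vanishes upon testing against itself.

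Finally, testing the equation for $\bv$ with $\bphi=\bv$ and using Cauchy--Schwarz in $(\cdot,\cdot)_{Q_1}$ together with $\norm{\bv}_{L^2(Q_1)}\le \tfrac{1}{B_1(Q)}\norm{\bv}_{Q_1}$ from \eqref{equi_norm} (the analogue on $Q_1$) gives
\[
\norm{\bv}_{Q_1}^2\le \norm{\mathbf{f}^{out}}_{L^2(Q_1)}\norm{\bv}_{L^2(Q_1)}+\norm{\bw}_{Q_1}\norm{\bv}_{Q_1}\le\Big(\tfrac{1}{B_1(Q)}\norm{\mathbf{f}^{out}}_{L^2(Q_1)}+\norm{\bw}_{Q_1}\Big)\norm{\bv}_{Q_1}.
\]
Dividing by $\norm{\bv}_{Q_1}$ and applying the triangle inequality $\norm{\bu^{out}}_{Q_1}\le\norm{\bv}_{Q_1}+\norm{\bw}_{Q_1}$ yields $\norm{\bu^{out}}_{Q_1}\le\tfrac{1}{B_1(Q)}\norm{\mathbf{f}^{out}}_{L^2(Q_1)}+2\norm{\bw}_{Q_1}$, and the extension bound $\norm{\bw}_{Q_1}\le E_1\norm{\bu^{in}}_{Q_2}$ turns the factor $2$ into the claimed $2E_1$. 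The main obstacle is the construction in the second paragraph: producing a divergence-free, periodic lift of the interface trace whose energy norm is controlled by $\norm{\bu^{in}}_{Q_2}$ with a \emph{purely geometric} constant. This is exactly where the regularity of $\Gamma$ and the non-contact hypothesis $\partial Q_2\cap\partial Q=\emptyset$ are needed, so that the trace lift can be localized away from $\partial Q$ and the Bogovskii correction inherits the required flux compatibility and periodicity.
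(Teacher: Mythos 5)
Your proposal is correct and follows the same overall skeleton as the paper's proof: homogenize the interface data by subtracting a divergence-free, $Q$-periodic lift of $\bu^{in}|_\Gamma$, solve for the remainder in $\mathring{H}(Q_1)$ by Lax--Milgram (where coercivity is immediate in the $\norm{\cdot}_{Q_1}$ norm and uniqueness follows by testing the homogeneous equation against the difference of two solutions), and then assemble the estimate $\norm{\bu^{out}}_{Q_1}\le \frac{1}{B_1}\norm{\mathbf{f}^{out}}_{L^2(Q_1)}+2E_1\norm{\bu^{in}}_{Q_2}$ by the triangle inequality exactly as you do.

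The one place where you genuinely diverge is the construction of the lift, which is the technical heart in both arguments. The paper does not build it by hand: it invokes a ready-made bounded, divergence-free extension operator $T$ (Corollary 3.2 of Kato--Mitrea--Ponce--Taylor) taking $\bu^{in}\in H(Q_2)$ to a field supported in a neighborhood $O$ of $Q_2$ with $\partial O\subset Q_1$, vanishing on $\partial O$, with $\norm{T(\bu^{in})}_Q\le E_1\norm{\bu^{in}}_{Q_2}$; periodicity is then automatic, and the restriction $T(\bu^{in})|_{Q_1}$ plays the role of your $\bw$. You instead compose two more elementary standard tools: a trace lift of $\bu^{in}|_\Gamma\in H^{1/2}(\Gamma)^n$ localized away from $\partial Q$, followed by a Bogovskii correction in $H^1_0(Q_1)^n$ to restore the divergence constraint, the flux compatibility $\int_{Q_1}\mathrm{div}\,\widetilde{\bu}\,d\by=0$ being guaranteed by $\bu^{in}\cdot\bn=0$ on $\Gamma$ and the vanishing of $\widetilde{\bu}$ on $\partial Q$. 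Both routes yield a constant depending only on $Q_1$, $Q_2$ and $\Gamma$; yours is more self-contained and makes transparent exactly where the hypotheses $\partial Q_2\cap\partial Q=\emptyset$ and the regularity of $\Gamma$ enter (localization of the lift, applicability of Bogovskii on the Lipschitz domain $Q_1$), while the paper's citation packages the same content and delivers the extension already divergence-free and already supported away from $\partial Q$. The resulting operator norm is called $E_1$ in both cases, and the rest of your estimate chain, including the appearance of $B_1$ via the norm equivalence \eqref{equi_norm} and the doubling of $E_1$ from the triangle inequality, matches the paper's \eqref{lemma_1_boundness} line for line.
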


\begin{proof}  
\quad To handle the inhomogeneous boundary condition, we proceed as follows. By  \cite[Corollary 3.2]{Kato_Mitrea_Ponce_Taylor_2000}, there exists a bounded, divergence free extension $T\left(\bu^{in} \right) $ of $\bu^{in}$ to a small neighborhood $O$ of $Q_2$ and vanishes at $\partial O\subset Q_1$ such that 
    	\begin{equation}
    		\norm{T\left(\bu^{in} \right)}_{Q} \leq E_1 \norm{\bu^{in}}_{Q_2}
		\label{ext_operator}
    	\end{equation}
    	where $E_1\ge 1$ depends only on $Q_1$ and $Q_2$. Furthermore, the extension $T\left(\bu^{in} \right)$ on $O$ can be extended periodically to  $\R^n$ \cite{conca1985application} since $T\left(\bu^{in} \right)$ vanishes on $\partial O$ and hence on $\partial Q$. We denote the restriction $T(\bu^{in})|_{Q_1}$ as $\tilde{\bu}^{out}\in H(Q_1)$ and $\mathring{\bu}^{out} \defeq \bu^{out} - \tilde{\bu}^{out} \in \mathring{H}(Q_1)$ and \eqref{lemma_1_eqn} becomes
    	\begin{equation}
    		\text{div}_{\by} \left( 2\mu_1 e(\mathring{\bu}^{out}) - p^{out}\bI \right)  = \mathbf{f}^{out} - \mu_1\Delta \tilde{\bu}^{out} \quad \text{ in } Q_1
    		\label{lemma_1_eqn2}
    	\end{equation}
Consider the variational formulation: Find $\mathring{\bu}^{out} \in \mathring{H}(Q_1)$ such that $ \forall \Phi \in \mathring{H}(Q_1)$, 
    	\begin{equation}
    		\int_{Q_1} 2\mu_1 e( \mathring{\bu}^{out}) : \overline{e(\Phi)} d\by = -\int_{Q_1} \mathbf{f}^{out} \cdot \overline{\Phi} d\by - \int_{Q_1} 2\mu_1 e( \tilde{\bu}^{out}) : \overline{e(\Phi)} d\by, \quad
    		\label{variational_form_for_in_to_out}
	    \end{equation}
The right hand side of \eqref{variational_form_for_in_to_out} can be bounded as follows
\begin{eqnarray*}
    &\left|  \int_{Q_1} \mathbf{f}^{out} \cdot \overline{\Phi} d\by + \int_{Q_1} 2\mu_1 e( \tilde{\bu}^{out}) : \overline{e(\Phi)} d\by\right| 
	    \leq& \left(\frac{\norm{\mathbf{f}^{out}}_{L^2(Q_1)}}{B_1(Q_{{1}})} + \norm{\tilde{\bu}^{out}}_{Q_1} \right) \norm{\Phi}_{Q_1} \\
\end{eqnarray*}
The sesquilinear form  $\int_{Q_1} 2\mu_1 e( \mathring{\bu}^{out}) : \overline{e(\Phi)} d\by$ is clearly bounded and coercive with constant 1. Hence by {the} Lax-Milgram Lemmat there exists a unique weak solution $\mathring{\bu}^{out}\in \mathring{H}(Q_1)$ to \eqref{lemma_1_eqn2} such that $ \norm{\mathring{\bu}^{out}}_{Q_1}\le (\frac{1}{B_1(Q_{{1}})}\norm{\mathbf{f}^{out}}_{L^2(Q_1)} + \norm{\tilde{\bu}^{out}}_{Q_1})$.  In terms of $\mathring{\bu}^{out}$, $\bu^{out}$ can be expressed as $ \bu^{out} = \tilde{\bu}^{out} + \mathring{\bu}^{out}$ and satisfies the estimate
	\begin{equation}
	    \norm{\bu^{out}}_{Q_1} \leq \norm{\mathring{\bu}^{out}}_{Q_1} + \norm{\tilde{\bu}^{out}}_{Q_1}\leq \frac{1}{B_1(Q_{{1}})} \norm{\mathbf{f}^{out}}_{L^2(Q_1)} + 2E_1 \norm{\bu^{in}}_{Q_2}
	    \label{lemma_1_boundness}
	\end{equation}
	To show the solution $\bu^{out}$ is unique, suppose $\bu^{out}_1$ and $\bu^{out}_2$ both solve \eqref{lemma_1_eqn} then the difference $\bw^{\text{diff}} = \bu^{out}_1 - \bu^{out}_2 \in \mathring{H}(Q_1)$ must satisfy 
	\[
	    \int_{Q_1} 2\mu_1 e( \bw^{\text{diff}}) : \overline{e(\Phi)} d\by = 0, \qquad \forall \Phi \in \mathring{H}(Q_1)
	\]
Hence $\bw^{\text{diff}}=0$ in $Q_1$ {because  $\bw\in \mathring{H}(Q_1) $}. We note the two special cases:
\beqa
	\norm{\bu^{out}}_{Q_1} \leq 2E_1 \norm{\bu^{in}}_{Q_2} \mbox{ for } \mathbf{f}^{out} = \bo
	\label{u_out_bound_0}\\
\norm{\bu^{out}}_{Q_1} \leq \frac{1}{B_1(Q_{{1}})} \sqrt{|Q_1|} + 2E_1 \norm{\bu^{in}}_{Q_2} \mbox{ for }  \mathbf{f}^{out} = -\be_{{j}},\, {j=1,\dots,n,}
\label{u_out_bound_nonzero}
\eeqa
where $|Q_1|$ is the volume of $Q_1$.
 \end{proof}
\begin{lemma}
	Let $Q_2$ satisfy the same assumptions as those in Lemma \ref{lemma_1}. For any pair of $\left( \bu^{out}, p^{out}\right) \in H(Q_1) \times L^2(Q_1)/ \mathbb{C}$ that satisfies \eqref{lemma_1_eqn}, there exists a unique vector $\bu^{in}(\by;\B{f}^{in})\in H(Q_2)$ that satisfies the Stokes equation with continuity of tangential traction on $\Gamma$
	\begin{equation}
	\left\lbrace
	\begin{split}
	    \text{div}_{\by} \left( 2\mu_1 e(\bu^{in}) - p^{in}\bI \right)  &= \B{f}^{in} \quad \text{in }Q_2,\\
	    \bn\times\bn\times\left[\left( \left( 2\mu_1 e(\bu^{out}) - p^{out}\bI \right) - \left( 2\mu_1 e(\bu^{in}) - p^{in}\bI \right)  \right)\cdot\bn\right] &= \bo \quad \text{on }\Gamma,
	\end{split}
	\right.
	\label{lemma_2_eqn}
	\end{equation}
	where in our context, $\mathbf{f}^{in} = \bo \text{ or } \mathbf{f}^{in} = -\be$. Moreover,
	\[
		\norm{\bu^{in}}_{Q_2} \leq   \frac{1}{B_1(Q_2)}\norm{\B{f}^{in}}_{L^2(Q_2)} + \frac{E_1(Q_1)}{B_1(Q_1)}\norm{\B{f}^{out}}_{L^2(Q_1)} + E_1(Q_1)\norm{\bu^{out}}_{Q_1}
\]
and $E_1(Q_1)$, $B_1(Q_1)$  and $B_1(Q_2)$ {depend only on $Q$ and $\Gamma$}.
		
	\label{lemma_2} 
	\end{lemma}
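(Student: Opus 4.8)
The plan is to recast the boundary-value problem \eqref{lemma_2_eqn} as a coercive variational problem posed on $H(Q_2)$ and to read off the estimate by testing against the solution itself. Multiplying the Stokes equation in \eqref{lemma_2_eqn} by the conjugate $\overline{\bv}$ of an arbitrary $\bv\in H(Q_2)$ and integrating by parts over $Q_2$ (whose boundary is exactly $\Gamma$ since $Q_2$ is an interior inclusion), the pressure term drops out because $\text{div}_{\by}\bv=0$, leaving
\begin{equation*}
\int_{Q_2} 2\mu_1 e(\bu^{in}):\overline{e(\bv)}\, d\by
= -\int_{Q_2}\B{f}^{in}\cdot\overline{\bv}\, d\by + \int_{\Gamma}\left[\left(2\mu_1 e(\bu^{in})-p^{in}\bI\right)\bn\right]\cdot\overline{\bv}\, dS.
\end{equation*}
Because $\bv\cdot\bn=0$ on $\Gamma$, only the tangential part of the traction survives in the boundary integral, so the tangential-traction interface condition in \eqref{lemma_2_eqn} lets me replace the traction of $\bu^{in}$ on $\Gamma$ by that of $\bu^{out}$. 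This is exactly the step that encodes the interface condition weakly: testing with $\bv$ compactly supported in $Q_2$ recovers the PDE $\text{div}_{\by}(2\mu_1 e(\bu^{in})-p^{in}\bI)=\B{f}^{in}$, and matching the remaining boundary terms against all admissible $\bv$ forces the tangential tractions to agree. Note that the orthogonality to $\mathcal{R}(Q_2)$ built into $H(Q_2)$ is what makes this traction-type problem well posed, since it guarantees Korn's inequality and hence coercivity.

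The crux is that the term $\int_{\Gamma}[(2\mu_1 e(\bu^{out})-p^{out}\bI)\bn]\cdot\overline{\bv}\, dS$ lives only in an $H^{-\frac12}(\Gamma)$ duality and cannot be bounded by $L^2$-norms of the data directly. I would remove it by returning to $Q_1$: extend $\bv$ to $\tilde{\bv}:=T(\bv)|_{Q_1}\in H(Q_1)$ using the divergence-free extension of Lemma \ref{lemma_1}, which agrees with $\bv$ on $\Gamma$, vanishes on $\partial Q$, and obeys $\norm{\tilde{\bv}}_{Q_1}\le E_1\norm{\bv}_{Q_2}$ by \eqref{ext_operator}. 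Pairing the Stokes equation \eqref{lemma_1_eqn} for $\bu^{out}$ with $\overline{\tilde{\bv}}$ and integrating by parts over $Q_1$ (whose outward normal on $\Gamma$ is $-\bn$, while the $\partial Q$ contribution vanishes by periodicity and the vanishing of $\tilde{\bv}$) yields
\begin{equation*}
\int_{\Gamma}\left[\left(2\mu_1 e(\bu^{out})-p^{out}\bI\right)\bn\right]\cdot\overline{\bv}\, dS
= -\int_{Q_1} 2\mu_1 e(\bu^{out}):\overline{e(\tilde{\bv})}\, d\by - \int_{Q_1}\B{f}^{out}\cdot\overline{\tilde{\bv}}\, d\by.
\end{equation*}
Substituting this back produces a variational problem whose left-hand side is precisely the $H(Q_2)$ inner product $(\bu^{in},\bv)_{Q_2}$, hence bounded and coercive with constant $1$, and whose right-hand side is a bounded antilinear functional of $\bv$. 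The Lax--Milgram lemma then gives a unique $\bu^{in}\in H(Q_2)$; uniqueness also follows directly, since the homogeneous problem forces $\norm{\bu^{in}}_{Q_2}=0$. The pressure $p^{in}\in L^2(Q_2)/\C$ is then recovered by the standard de Rham argument so that the strong form \eqref{lemma_2_eqn} holds.

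Finally, to obtain the stated estimate I would take $\bv=\bu^{in}$ and write $\tilde{\bu}^{out}:=T(\bu^{in})|_{Q_1}$. The first term is bounded by $\frac{1}{B_1(Q_2)}\norm{\B{f}^{in}}_{L^2(Q_2)}\norm{\bu^{in}}_{Q_2}$ using the norm equivalence \eqref{equi_norm}; the second by $E_1\norm{\bu^{out}}_{Q_1}\norm{\bu^{in}}_{Q_2}$ via Cauchy--Schwarz in the $(\cdot,\cdot)_{Q_1}$ inner product together with $\norm{\tilde{\bu}^{out}}_{Q_1}\le E_1\norm{\bu^{in}}_{Q_2}$; and the third by $\frac{E_1}{B_1(Q_1)}\norm{\B{f}^{out}}_{L^2(Q_1)}\norm{\bu^{in}}_{Q_2}$ using $\norm{\tilde{\bu}^{out}}_{L^2(Q_1)}\le\frac{1}{B_1(Q_1)}\norm{\tilde{\bu}^{out}}_{Q_1}\le\frac{E_1}{B_1(Q_1)}\norm{\bu^{in}}_{Q_2}$. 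Dividing through by $\norm{\bu^{in}}_{Q_2}$ gives exactly the claimed inequality with $E_1(Q_1)=E_1$. I expect the boundary-traction term to be the main obstacle: giving it rigorous meaning and converting it into $Q_1$-volume integrals with the correct constants is precisely where the extension operator of Lemma \ref{lemma_1} and the orthogonality of the symmetric and antisymmetric parts of $\nabla\tilde{\bv}$ are indispensable.
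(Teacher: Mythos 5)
Your proposal is correct and follows essentially the same route as the paper's proof: the same variational formulation on $H(Q_2)$, the same use of $\bv\cdot\bn=0$ together with the tangential-traction condition to swap $\boldsymbol{\pi}^{in}\cdot\bn$ for $\boldsymbol{\pi}^{out}\cdot\bn$ in the boundary integral, the same conversion of that integral into $Q_1$-volume terms via the extension operator $T$, and the same Lax--Milgram conclusion. The only (harmless) differences are cosmetic: you extract the estimate by testing with $\bv=\bu^{in}$ rather than bounding the functional norm directly, and your bound on the $\B{f}^{in}$ term carries the constant $\frac{1}{B_1(Q_2)}$ as in the lemma statement, whereas the paper's displayed bound has a redundant extra factor of $E_1$ there.
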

	
	\begin{proof}\quad Take ${\Phi} \in H(Q_2)$, the variation formulation for the PDE is
		\[
		    \int_{Q_2} \left( \text{div}_{\by} \left( 2\mu_1 e(\bu^{in}) - p^{in}\bI \right) \right) \cdot \bar{\Phi} d\by = \int_{Q_2} \B{f}^{in} \cdot \bar{\Phi} d\by.
		\]
	{For the ease of notation, let $\bpi^{in}:=\btau(\bu^{in},\mu_1,p^{in})$ and   $\bpi^{out}:=\btau(\bu^{out},\mu_1,p^{out})$ where the stress function $\btau$ is defined in \eqref{def_total_stress}}.	
	 Applying integration by parts on the left hand side, followed by {an} application of {the} divergence theorem leads to 
		\begin{equation}
		    - \int_{\Gamma} ({  \bpi^{in} \cdot \bar{\Phi}})  \cdot \bn dS - \int_{Q_2} 2\mu_1 e(\bu^{in}) : \overline{e(\Phi)} d\by = \int_{Q_2} \B{f}^{in} \cdot \bar{\Phi} d\by
		    \label{lemma2_pre_variational_form}
		\end{equation}
		Let $\B{t}$ denote the unit vector in the tangent plane such that $\B{t}\cdot\bn = 0$. The conditions on $\Gamma$ in \eqref{lemma_2_eqn} imply
		\begin{equation*}
		    \left\{
		    \begin{split}
		        &\Phi\cdot\bn = 0\Rightarrow \Phi = d(\by) \B{t} \mbox{ for some function }d(\by),\\ &\bn\times\bn\times\left[\left( \bpi^{out} - \bpi^{in} \right)\cdot\bn\right] = \bo \Rightarrow \left( \bpi^{out} - \bpi^{in} \right)\cdot\bn = C(\by)\bn \mbox{ for some function }C(\by)
		    \end{split}
		    \right.
		\end{equation*}
		With these observations {and the fact that $\bpi^{in}$ is symmetric}, the first term in \eqref{lemma2_pre_variational_form} can be expressed as 
		\begin{align*}
		    - \int_{\Gamma} {  \bar{\Phi}  \cdot \bpi^{in} }\cdot \bn \,dS &= \int_{\Gamma} \overline{d(\by)}\B{t} \cdot \left[\left( \bpi^{out} - \bpi^{in} \right)\cdot\bn\right] - \int_{\Gamma} \overline{d(\by)}\B{t} \cdot (\bpi^{out} \cdot\bn)\, dS\\
		    =& - \int_{\Gamma} \bar{\Phi} \cdot \bpi^{out} \cdot\bn \,dS
		\end{align*}
		and hence the variational form \eqref{lemma2_pre_variational_form} becomes for all $\Phi\in H(Q_2)$
		\begin{equation}
		- \int_{Q_2}2\mu_1 e(\bu^{in}) : \overline{e(\Phi)}d\by  = \int_{Q_2} \B{f}^{in}\cdot \bar{\Phi}d\by + \int_{\Gamma} \bar{\Phi} \cdot \bpi^{out} \cdot\bn dS		
		\label{lemma_2_variational_form}
		\end{equation}
		To bound the right hand side of \eqref{lemma_2_variational_form}, we first extend $\Phi\in H(Q_2)$ by the operator $T$ described in \eqref{ext_operator}
		\begin{equation}
		\norm{T(\Phi)}_{Q} \leq E_1\norm{\Phi}_{Q_2}
		\end{equation}
		$T(\Phi)$ rapidly decays to zero in a small neighborhood of $Q_2$ and stays $0$ for the rest of $Q_1$. The restriction of $T(\Phi)$ in $Q_1$, denoted by $\Phi^{out}$, has the following estimate
		\begin{equation}
		\norm{\Phi^{out}}_{Q_1} \leq \norm{T(\Phi)}_{Q}\leq E_1 \norm{\Phi}_{Q_2}
		\label{bound_for_phi_from_in_to_out}
		\end{equation}
		Hence
		\begin{equation}\nonumber
		\begin{split}
		&\left| \int_{\Gamma} \bar{\Phi} \cdot \bpi^{out} \cdot\bn dS\right|=   \left|  \int_{\Gamma} \bar{\Phi}^{out} \cdot \bpi^{out} \cdot\bn dS \right|\\
		=& \left| \int_{Q_1} \left[ \text{div}_{\by} \left( 2\mu_1 e(\bu^{out}) - p^{out}\bI \right) \right] \cdot \overline{\Phi^{out}} d\by + \int_{Q_1} 2\mu_1 e(\bu^{out}): \overline{e(\Phi^{out})} d\by \right|\\
		\leq& \left| \int_{Q_1}  \B{f}^{out}  \cdot \overline{\Phi^{out}} d\by\right| + \left|\int_{Q_1} 2\mu_1 e(\bu^{out}): \overline{e(\Phi^{out})} d\by \right|\\
		\leq& \frac{E_1}{B_1(Q_{{1}})}\norm{\B{f}^{out}}_{L^2(Q_1)}\norm{\Phi}_{Q_2} + E_1\norm{\bu^{out}}_{Q_1}\norm{\Phi}_{Q_2}
		\end{split}
		\end{equation}
		Therefore the right hand side of \eqref{lemma_2_variational_form} is bounded by
\[
	 \norm{\Phi}_{Q_2} \left( \frac{E_1 \norm{\B{f}^{in}}_{L^2(Q_2)}}{B_1(Q_{{2}})} + \frac{E_1 \norm{\B{f}^{out}}_{L^2(Q_1)}}{B_1(Q_{{1}})} + E_1\norm{\bu^{out}}_{Q_1} \right)
\]
Finally, by the Lax-Milgram Lemma a unique solution $\bu^{in}$ exists such that
		\begin{equation}
		\norm{\bu^{in}}_{Q_2} \leq   \frac{E_1}{B_1(Q_{{2}})}\norm{\B{f}^{in}}_{L^2(Q_2)} + \frac{E_1}{B_1(Q_{{1}})}\norm{\B{f}^{out}}_{L^2(Q_1)} + E_1\norm{\bu^{out}}_{Q_1}
		\label{bound_lemma_2}
		\end{equation} \end{proof}
The construction of the solution for $z$ with large magnitude will be carried out {using} the  following steps.
\begin{enumerate}
	\item $O(w^{-1})$:  Consider the system of  \eqref{order_m1_pde_Q2} and \eqref{order_m1_interface_cond} for    $\bu_0^{in}(\by;\be)\in H(Q_2)$. 
	\begin{equation}
	\left \{\begin{split}
	\text{div}_{\by} \left( 2\mu_1 e(\bu_0^{in}) \right)  &= \bo\quad \text{in }  Q_2\\
	2\mu_1e(\bu_0^{in}) \cdot \bn &= C(\by) \bn \quad \text{on } \Gamma
	\end{split}	\right.
	\label{w_m1_eqn}
	\end{equation}
		The variational formulation is
		\begin{equation}
		-\int_{\Gamma} \bar{\bv}\cdot   2\mu_1e(\bu_0^{in}) \cdot\bn   - \int_{Q_2}2\mu_1 e(\bu_0^{in}) : \overline{e(\bv)} = 0 \quad \forall \bv\in H(Q_2)
		\end{equation}
		The first term vanishes because of the boundary conditions. Hence 
	        \[
		\bu_0^{in}(\by;\be) = \bo \mbox{ in } Q_2 \mbox{ {because $H(Q_2)\perp \mathcal{R}(Q_2)$.}}
		\]
	\item $O(w^0)$ in $Q_1$: Solve the system of  \eqref{interface_cond_3} and \eqref{order_0_Q1} for $\bu_0^{out}(\by;\be) \in H(Q_1)$.
	\begin{equation}
	\label{variational_form_noslip}
	\left \{\begin{split}
	\text{div}_{\by} \left( 2\mu_1 e(\bu_0^{out}) - p_0^{out}\bI \right)  &= -\be \quad \text{in } Q_1\\
	\bu_0^{out} = \bu_0^{in} &= \bo \quad \text{on } \Gamma
	\end{split}\right.
	\end{equation}
	 An application of Lemma \ref{lemma_1}  and \eqref{u_out_bound_nonzero} leads to the following result
		\begin{equation}
		\norm{\bu_0^{out}}_{Q_1} \leq \frac{\sqrt{|Q_1|}}{B_1(Q_{{1}})} + 2E_1 \norm{\bu^{in}_0}_{Q_2}= \frac{\sqrt{|Q_1|}}{B_1(Q_{{1}})}
		\end{equation}
	\item $O(w^0)$ in $Q_2$: Consider the system of  \eqref{order_0_Q2} and \eqref{order_k_interface_cond} for $\bu_1^{in}\in H(Q_2)$:
	\begin{equation*}
	\left \{\begin{split}
	&\text{div}_{\by} \left( 2\mu_1 e(\bu_1^{in}) - p_0^{in}\bI \right)  = -\be  \quad \text{in } Q_2\\
	&\bn\times\bn\times\left[\left( \left( 2\mu_1 e(\bu_0^{out}) - p_0^{out}\bI \right) - \left( 2\mu_1 e(\bu_{1}^{in}) - p_0^{in}\bI \right)  \right)\cdot \bn\right] = \bo \quad \text{on } \Gamma
	\end{split}\right.
	\end{equation*}
By applying Lemma \ref{lemma_2} and \eqref{bound_lemma_2} with $\B{f}^{out} = -\be$ and $\B{f}^{in} = -\be$, we obtain
				\begin{equation}
			\norm{\bu_{1}^{in}}_{Q_2} \leq {C_1E_1},\quad {C_1}:=\frac{\sqrt{|Q_2|}}{B_1(Q_{{2}})} + 2\frac{\sqrt{|Q_1|}}{B_1(Q_{{1}})}
			\label{bound_on_u1_in}
		\end{equation}
	
	\item Induction step, $k\geq 1$: Given $\bu_{k}^{in} \in H(Q_2)$ and $\bu_{k-1}^{out}\in H(Q_1)$, find $\bu_{k+1}^{in}(\by;\bo) \in H(Q_2)$ and $\bu_{k}^{out}(\by;\bo)\in H(Q_1)$.
	\begin{enumerate}
		\item Applying Lemma \ref{lemma_1} with $\B{f} = \bo$, we conclude that for a given $\bu_{k}^{in} \in H(Q_2)$, $k\ge 1$, there exists a unique $\bu_{k}^{out}\in H(Q_1)$ that solves the system of \eqref{order_k_Q1} and \eqref{interface_cond_3} and assumes the estimate 
		     \begin{equation}
			\left \{\begin{split}
			\text{div}_{\by} \left( 2\mu_1 e(\bu_{k}^{out}) - p_{k}^{out}\bI \right)  &= \bo  \quad \text{in } Q_1\\
			\bu_{k}^{out} &= \bu_{k}^{in} \quad \text{on } \Gamma,
			\end{split}\right.
			\label{induction_in_to_out}
		\end{equation}
					\begin{equation}
				\norm{\bu_{k}^{out}}_{Q_1} \leq 2E_1\norm{\bu_{k}^{in}}_{Q_2}
				\label{ineq_in_to_out_induction}
			\end{equation}

		\item By applying Lemma \ref{lemma_2} with $\B{f}^{in} = \bo=\B{f}^{out}$, we see that for any given $\bu_{k}^{out}\in H(Q_1)$, $k\ge 1$ that satisfies \eqref{induction_in_to_out}, there exists a unique solution $\bu_{k+1}^{in} \in H(Q_2)$ to the system of equations \eqref{order_k_Q2} and \eqref{order_k_interface_cond}
		\begin{equation}
			\left \{\begin{split}
			\text{div}_{\by} \left( 2\mu_1 e(\bu_{k+1}^{in}) - p_{k}^{in}\bI \right)  &= \bo  \quad \text{in } Q_2\\
			\bn\times\bn\times\left[\left( \left( 2\mu_1 e(\bu_k^{out}) - p_k^{out}\bI \right) - \left( 2\mu_1 e(\bu_{k+1}^{in}) - p_k^{in}\bI \right)  \right)\bn\right] &= \bo \quad \text{on } \Gamma
			\end{split}\right.
		\end{equation}
		
Moreover, 			\begin{equation}
				\norm{\bu_{k+1}^{in}}_{Q_2} \leq E_1\norm{\bu_{k}^{out}}_{Q_1}
				\label{ineq_out_to_in_induction}
			\end{equation}
	\end{enumerate}

\end{enumerate}
Now we have found the coefficients $\bu^{in}_n(\by;\be)$ and $\bu^{out}_n(\by;\be)$  in \eqref{u_out} iteratively. We prove the convergence of the series in the following theorem by taking into account the fact that $\bu_0^{in} = \bo$.

\begin{theorem}
	Define the partial sums
	\begin{equation*}
	\B{S}_q^{in}(\by;\be,w) := \sum_{k = 0}^{q} \bu^{in}_{k+1}(\by,\be) w^{k+1},\,\B{S}_q^{out}(\by;\be,w) := \sum_{k = 0}^{q} \bu^{out}_{k}(\by;\be) w^k.
	\end{equation*}
 {Let $R\in(0,1)$, } in the disk $ |w|\le \frac{R}{2E_1^2}$, the series $\B{S}_q^{in}(\by;\be,w)$ and $\B{S}_q^{out}(\by;\be,w)$ converge uniformly to  $\bu^{in}_{\infty}(\by;\be,w)\in H(Q_2)$ and $\bu^{out}_{\infty}(\by;\be,w)\in H(Q_1)$, respectively. Therefore, $\bu_{\infty}(\by;\be,w) \defeq \bu^{in}_{\infty}(\by;\be,w)\chi_{2} + \bu^{out}_{\infty}(\by;\be,w)\chi_{1} \in H(Q)$ solves the cell problem \eqref{cell_prob} and is analytic for $ |w|< \frac{1}{2E_1^2}$.
	\label{theorem_1}
\end{theorem}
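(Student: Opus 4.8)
The plan is to convert the two induction estimates into a geometric bound on the coefficients, apply the Weierstrass $M$-test to obtain uniform convergence of the two series, and then identify the limit as the unique weak solution of the cell problem \eqref{cell_prob}. First I would chain \eqref{ineq_in_to_out_induction} and \eqref{ineq_out_to_in_induction}: for $k\ge1$,
\[
\norm{\bu_{k+1}^{in}}_{Q_2}\le E_1\norm{\bu_{k}^{out}}_{Q_1}\le 2E_1^2\norm{\bu_{k}^{in}}_{Q_2},
\]
so successive inclusion coefficients shrink by the factor $2E_1^2$. Combined with the base case \eqref{bound_on_u1_in}, a one-line induction gives $\norm{\bu_{k}^{in}}_{Q_2}\le C_1E_1(2E_1^2)^{k-1}$ for $k\ge1$, and then \eqref{ineq_in_to_out_induction} yields $\norm{\bu_{k}^{out}}_{Q_1}\le C_1(2E_1^2)^{k}$ for $k\ge1$.

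Next I would estimate the general terms of the partial sums. Writing $m=k+1$, the $H(Q_2)$-norm of $\bu_{k+1}^{in}w^{k+1}$ is at most $C_1E_1(2E_1^2)^{m-1}|w|^{m}$, which on the closed disk $|w|\le R/(2E_1^2)$ is dominated by $\tfrac{C_1}{2E_1}R^{m}$; since $R\in(0,1)$ the majorant series converges, so by the Weierstrass $M$-test $\B{S}_q^{in}$ converges absolutely and uniformly there. The same estimate handles $\B{S}_q^{out}$ (its $k=0$ term $\bu_0^{out}$ being a fixed element of $H(Q_1)$, independent of $w$). Completeness of $H(Q_2)$ and $H(Q_1)$ then provides the limits $\bu_\infty^{in}\in H(Q_2)$ and $\bu_\infty^{out}\in H(Q_1)$.

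It then remains to assemble $\bu_\infty=\bu_\infty^{in}\chi_2+\bu_\infty^{out}\chi_1$ and verify it solves \eqref{cell_prob}. Every coefficient $\bu_k^{in}$, $\bu_k^{out}$ is divergence-free, $Q$-periodic and satisfies $\bu_k\cdot\bn=0$ on $\Gamma$, with $\bu_k^{in}$ orthogonal to $\mathcal{R}(Q_2)$; summing the matching condition $\bu_k^{in}=\bu_k^{out}$ on $\Gamma$ from \eqref{interface_cond_3} gives $\llbracket\bu_\infty\rrbracket_\Gamma=\bo$, so $\bu_\infty\in H(Q)$. To confirm the equation I would substitute the series into \eqref{cell_prob}, whose viscosity is $\mu_1$ in $Q_1$ and $\mu_1/w$ in $Q_2$: by construction, equating equal powers of $w$ reproduces exactly \eqref{order_0_Q1}--\eqref{order_k_interface_cond}, and the $H^1$-convergence of the series justifies the term-by-term operations, so the summed identities hold; equivalently one passes to the limit in the weak form \eqref{variational_form_cell_prob} and invokes the Lax--Milgram uniqueness to conclude $\bu_\infty=\bu^k(z)$ with $z=1/w$. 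Analyticity on $|w|<1/(2E_1^2)$ then follows by letting $R\to1^-$, since a norm-convergent power series with coefficients in the Banach space $H(Q)$ is analytic on its disk of convergence.

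The step I expect to be the main obstacle is this last identification. The delicate points are (i) that the jump condition across $\Gamma$ genuinely survives summation so that $\bu_\infty\in H(Q)$, and (ii) the justification of the term-by-term manipulation of the equation in the presence of the singular weight $\mu_1/w$ in $Q_2$; the latter works only because $\bu_0^{in}=\bo$ removes the would-be $O(w^{-1})$ contribution, leaving $e(\bu_\infty^{in})=O(w)$ so that $\mu_1 e(\bu_\infty^{in})/w$ stays bounded as $w\to0$. By contrast, the convergence estimates of the first two steps are routine once the bounds of Lemmas \ref{lemma_1} and \ref{lemma_2} are available.
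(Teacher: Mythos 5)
Your proposal is correct and follows essentially the same route as the paper: chaining \eqref{ineq_in_to_out_induction} and \eqref{ineq_out_to_in_induction} with the base case \eqref{bound_on_u1_in} gives the geometric bound $\norm{\bu_k^{in}}_{Q_2}\le C_1E_1(2E_1^2)^{k-1}$, uniform convergence on $|w|\le R/(2E_1^2)$ follows (the paper via an explicit Cauchy-tail estimate, you via the equivalent Weierstrass $M$-test), and the limit is identified with the cell-problem solution by uniqueness. The only cosmetic difference is that the paper invokes Morera's theorem for Banach-space-valued functions to deduce analyticity of the uniform limit of the polynomial partial sums, whereas you appeal directly to the analyticity of a norm-convergent power series on its disk of convergence; these are interchangeable.
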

\begin{proof}
\qquad For each $q\in \mathbb{N}$, $\B{S}_{q}^{in}(\by;\be,w)$ is a polynomial function of $w$ and maps from $\mathbb{C}$ to the Hilbert space $H(Q_2)$. 
Similarly, $\B{S}_q^{out}(\by;\be,w)$ maps from $\mathbb{C}$ to $H(Q_1)$. 
To show uniform convergence, we note that \eqref{ineq_in_to_out_induction} and \eqref{ineq_out_to_in_induction} imply there exist{s a} positive constant $E_1$ that depends only on $Q_1$ and $Q_2$ such that $\norm{\bu^{in}_{k+1}}_{Q_2} \leq E_1\norm{\bu^{out}_{k}}_{Q_1} \leq 2E_1^2 \norm{\bu^{in}_{k}}_{Q_2}$. Therefore,
	\begin{equation}
\norm{\bu_k^{in}}_{Q_2} \leq \left(2E_1^2 \right)^{k-1}\norm{\bu_1^{in}}_{Q_2}, k\ge 1
		\label{MYO_in_induction}
	\end{equation}
	 Let $m>q>N$, and define $r := 2E_1^2\left| w \right|$. Then {by \eqref{bound_on_u1_in}} implies
	 \begin{align*}
	&\norm{\B{S}_m^{in}(w) - \B{S}_q^{in}(w)}_{Q_2}\leq \norm{\bu_1^{in}}_{Q_2} \left( (2E_1^2)^q |w|^{q+1} + \cdots + (2E_1^2)^{m-1}\left|  w\right|^{m} \right)\\
	&\leq \frac{r^{q+1} - r^{m+1}}{1-r} \frac{\norm{\bu_1^{in}}_{Q_2}}{2E_1^2}
	\leq \frac{r^{q+1} - r^{m+1}}{1-r}\left( {\frac{C_1}{2E_1^2}} \right) \mbox{  {,\, $C_1$ is defined in \eqref{bound_on_u1_in}}}.
	\end{align*}
Therefore, for $r\le R<1$, i.e. $|w|\le \frac{R}{2E_1^2}$, where $R$ is any fixed number in $(0,1)$,
	\begin{equation}
		\norm{\B{S}_m^{in}(w) - \B{S}_q^{in}(w)}_{Q_2} \le \left( {\frac{C_1}{2E_1^2}} \right)\left(\frac{R^{N+1}}{1-R}\right), \,\,\forall m>q>N.
			\end{equation}
For $\B{S}_q^{out}(\by;\be,w)$ we have $\norm{\bu_k^{out}}_{Q_1} \leq \left(2E_1^2 \right)^{k -1}\norm{\bu_1^{out}}_{Q_1}$. By a similar procedure,  for $m>q>N$ and $|w|\le \frac{R}{2E_1^2}$ the following estimate is valid
	\begin{equation*}
	    \norm{\B{S}_m^{out}(w) - \B{S}_q^{out}(w)}_{Q_1} \leq {C_1}\left(\frac{R^{N+1}}{1-R} \right)
	\end{equation*}
Therefore, for every fixed $w$ {satisfying} $|w|\le \frac{R}{2E_1^2}$ for any $0<R<1$, $\B{S}_q^{in}(\by;w)$ and $\B{S}_q^{out}(\by;w)$ converge uniformly to $\bu^{in}_{\infty}(\by;w)\in H(Q_2)$ and $\bu^{out}_{\infty}(\by;w)\in H(Q_1)$, respectively. Since for each $q$, $\B{S}_q^{in}(\by;w)$ and  $\B{S}_q^{out}(\by;w)$ are polynomials of $w$, hence analytic, the uniform convergence implies that the limit functions $\bu^{in}_{\infty}(\by;w)$ and $\bu^{out}_{\infty}(\by;w)$ are also analytic in $|w|<  \frac{1}{2E_1^2}$ with values in $H(Q_1)$ and $H(Q_2)$, respectively, by applying Morera's theorem for Banach space valued analytic functions \cite{1987} to the uniformly converging sequences. By construction, the function $ \bu_{\infty}(\by;\be,w)$ defined in \eqref{large_ansaz} solves the cell problem \eqref{cell_prob} for all $w$ in the disk $\{w: |w|<\frac{1}{2E_1^2}\}:=B_0(\frac{1}{2E_1^2})$.  Moreover, the uniqueness of the solution implies that  $\bu_{\infty}(\by;\be_k,w) = \bu^{k}(\by;\frac{1}{w})  \text{ in } H(Q)$ for $w\in B_0(\frac{1}{2E_1^2})\cap   \{w\in \mathbb{C}\setminus (-\infty,0]\}$. \end{proof}

The following theorem shows the relation between the two-fluid self-permeability $\bK$ in \eqref{permeability_def}  and the Darcy permeability $\bK^{(D)}$ in \eqref{K_tartar_form}
\begin{theorem}
	In the case of large viscosity $|z|>2E_1^2$ (or $|w|<\frac{1}{2E_1^2}$),  we have
	\begin{enumerate}
		\item $\bu_{\infty}^{in}(\by;\be_i,0) = \bo$ in $Q_2$
		\item As $w\goto 0$, the solution $\bu_{\infty}^{out}(\by;\be_i,w)$ converges uniformly in $\mathring{H}(Q_1)$ to the solution $\bu_D^{i}(\by)$ of the classical cell problem \eqref{classical_cell_prob}.
\item For $w\in B_0( \frac{1}{2E_1^2})$, the difference between the self-permeability $\B{K}(\by;\be_i,w)$ and the classical permeability tensor $\B{K}^{(D)}(\by;\be_i)$ satisfies $\lvert K_{ij} - (K^{(D)})_{ij} \rvert = O( |w|)$, hence $\B{K} \to \B{K}^{(D)}$ uniformly as $|w| \to 0$.
	\end{enumerate} 	
\end{theorem}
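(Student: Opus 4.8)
The plan is to take the three claims in order, reading each off the series construction of Theorem \ref{theorem_1} together with the geometric norm estimates proved there.

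For the first claim I would simply inspect the inside expansion. The $O(w^{-1})$ step of the construction gave $\bu_0^{in}=\bo$, so the inside series carries no constant term: $\bu^{in}_\infty(\by;\be_i,w)=\sum_{k\ge 0}\bu^{in}_{k+1}(\by;\be_i)\,w^{k+1}$, and every summand has a factor $w^{k+1}$ with $k+1\ge1$. Evaluating at $w=0$ gives $\bu^{in}_\infty(\by;\be_i,0)=\bo$ in $Q_2$, so this claim is immediate.

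The second claim rests on identifying the leading outside coefficient with the Darcy cell solution. I would note that $\bu_0^{out}(\by;\be_i)$ is determined by the $O(w^0)$ system \eqref{variational_form_noslip}, namely $\text{div}_{\by}(2\mu_1 e(\bu_0^{out})-p_0^{out}\bI)=-\be_i$ in $Q_1$ with $\bu_0^{out}=\bo$ on $\Gamma$, divergence free and $Q$-periodic. Because $\bu_0^{out}$ is divergence free, $\text{div}_{\by}(2\mu_1 e(\bu_0^{out}))=\mu_1\Delta_{\by}\bu_0^{out}$, so this is precisely the classical cell problem \eqref{classical_cell_prob} with $\mu=\mu_1$ and force $-\be_i$; by uniqueness $\bu_0^{out}=\bu_D^i\in\mathring{H}(Q_1)$. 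For the rate, I would use the geometric bound $\norm{\bu_k^{out}}_{Q_1}\le(2E_1^2)^{k-1}\norm{\bu_1^{out}}_{Q_1}$ from Theorem \ref{theorem_1} and sum the tail of $\bu^{out}_\infty(w)-\bu_0^{out}=\sum_{k\ge1}\bu_k^{out}w^k$, obtaining $\norm{\bu^{out}_\infty(w)-\bu_D^i}_{Q_1}\le \norm{\bu_1^{out}}_{Q_1}\,|w|/(1-2E_1^2|w|)=O(|w|)$ for $|w|<1/(2E_1^2)$; since the limit lies in $\mathring{H}(Q_1)$, this is convergence in the $\mathring{H}(Q_1)$ norm.

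For the third claim I would start from the plain forms $K_{ij}(z)=\int_Q \bu^j\cdot\be_i\,d\by$ from \eqref{permeability_def} and $K^{(D)}_{ij}=\int_{Q_1}\bu_D^j\cdot\be_i\,d\by$ from \eqref{K_def}, with $\bu^j=\bu_\infty(\by;\be_j,w)$ and the second claim applied to the unit vector $\be_j$. Splitting the domain gives
\[
K_{ij}-K^{(D)}_{ij}=\int_{Q_1}\bigl(\bu^{out}_\infty(\by;\be_j,w)-\bu_D^j\bigr)\cdot\be_i\,d\by+\int_{Q_2}\bu^{in}_\infty(\by;\be_j,w)\cdot\be_i\,d\by.
\]
The first integral is $O(|w|)$ by Cauchy--Schwarz, the $O(|w|)$ estimate from the second claim, and the norm equivalence \eqref{equi_norm}; the second is $O(|w|)$ by the analogous tail bound $\norm{\bu^{in}_\infty(w)}_{Q_2}\le\norm{\bu_1^{in}}_{Q_2}\,|w|/(1-2E_1^2|w|)$ following from \eqref{MYO_in_induction}. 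Hence $|K_{ij}-(K^{(D)})_{ij}|=O(|w|)$ and $\bK\to\bK^{(D)}$ as $|w|\to0$. The only genuinely conceptual point is the identification $\bu_0^{out}=\bu_D^i$: recognizing that at leading order the two-fluid interface conditions degenerate exactly into the no-slip condition of the Tartar cell problem, so that $\text{div}(2\mu_1 e(\cdot))$ collapses to $\mu_1\Delta$ on divergence-free fields. Everything else is bookkeeping on the two geometric series whose uniform convergence Theorem \ref{theorem_1} already supplies, with the error bounds obtained by factoring out one power of $w$ and summing.
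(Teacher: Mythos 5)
Your proposal is correct and follows essentially the same route as the paper: pass to the limit term by term using the uniform convergence of the series, identify $\bu_0^{out}$ with $\bu_D^i$ by uniqueness of the classical cell problem, and bound $K_{ij}-K^{(D)}_{ij}$ via Cauchy--Schwarz, the norm equivalence \eqref{equi_norm}, and the geometric tail estimates from Theorem \ref{theorem_1}. The only cosmetic difference is that you split the error integral over $Q_1$ and $Q_2$ separately while the paper bounds the combined $L^2(Q)$ norm of $\bu^i-\chi_1\bu_D^i$ in one step; your explicit remark that $\mathrm{div}(2\mu_1 e(\cdot))$ reduces to $\mu_1\Delta$ on divergence-free fields is a welcome clarification the paper leaves implicit.
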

\begin{proof}
\quad The uniform convergence allows passing the limit $w \rightarrow 0$ inside the summation of  \eqref{u_out} to obtain  $\bu_{\infty}^{in}(\by;\be_i,0) = \bo$. Similarly, the uniform convergence allows passing the limit $w \rightarrow 0$ inside the summation of  \eqref{u_out} to obtain
		\[
		\bu^{out}_{\infty}(\by;\be_{{i}},0) =\bu_0^{out}(\by;\be_i)
		\]		
		Furthermore, $\bu_0^{out}(\by;\be_i) \in H(Q_1)$ satisfies \eqref{variational_form_noslip} and in fact $\bu_0^{out}(\by;\be_i) \in \mathring{H}(Q_1)$ since $\bu_0^{out}(\by;\be_i)\vert_{\Gamma} = \bo$, which is identical to the equation for $\bu_D$ \eqref{classical_cell_prob}. The uniqueness of the solution then ensures that $\bu_0^{out}(\by;\be_i) = \bu_D^{i}(\by)$. Therefore the series $\bu_{\infty}^{out}(\by;\be_i,w) \to \bu_D^{i}(\by)$ uniformly as $|w|\to 0$ in $\mathring{H}(Q_1)$. For (iii), we note that 
		\begin{equation*}
		\begin{split}
		\left|  K_{ij} (w)- K^{(D)}_{ij} \right|  &= \left|  \int_{Q} \left( \bu^i - \chi_1 \bu_D^i\right) \cdot \be_j d\by\right| \leq \norm{\bu^i - \chi_1\bu^i_D}_{L^2(Q)}\\
		&\le \frac{1}{B_1(Q)}  \norm{\sum_{k=1}^{\infty} \left( \bu_{k}^{in}(\by;\be_i)\chi_{2} + \bu_{k}^{out}(\by;\be_i)\chi_{1} \right)w^k }_Q 
		\end{split}
		\end{equation*}
		From \eqref{ineq_in_to_out_induction}, \eqref{MYO_in_induction} and \eqref{bound_on_u1_in}, we have for $|w|<\frac{1}{2E_1^2}$, or equivalently $|z| > 2E_1^2$, 
		\begin{equation}
		\label{large_z_asymp}
		\left|  K_{ij} (w)- K^{(D)}_{ij} \right| \leq 
		{C_1\left( \frac{E_1+1}{2E_1 B_1(Q)}\right)}\frac{2E_1^2 |w|}{1-2E_1^2 |w|}	
		\end{equation}\end{proof}
In the following section, we study the behavior of $\bK(z)$ near $z=0$, i.e. the inclusion is an air bubble. 
\subsection{Analyticity of the solution for small $|z|$}
Let $\be$ be a constant unit vector in $\mathbb{R}^n$. We seek solutions of the following form 
\begin{align}
\bu_{null}^{in}(\by;\be,z) &= \sum_{k = 0}^{\infty} \bu^{in}_k(\by;\be) z^k ,\quad  p^{in}(\by;\be,z) = \sum_{k = 0}^{\infty} p^{in}_k(\by;\be) z^k \text{ in } Q_2, \label{u_in_prime}\\
\bu_{null}^{out}(\by;\be,z) &= \sum_{k = 0}^{\infty} \bu^{out}_k(\by;\be) z^k,\quad  p^{out}(\by;\be,z) = \sum_{k = 0}^{\infty} p^{out}_k(\by;\be) z^k \text{ in } Q_1\label{u_out_prime}
\end{align}
By a procedure similar to that in Section \ref{large_z}, the following equations are obtained via collecting terms with respect to the order of $z$. The PDEs for $Q_1$ are as follows.
\beqa
	O(1)&:&\qquad \text{div}_{\by} \left( 2\mu_1 e(\bu_0^{out}) - p_0^{out}\bI \right)  = -\be \label{order_0_Q1_prime}\\	
        O(z^k), \,k\geq 1&:&\qquad \text{div}_{\by} \left( 2\mu_1 e(\bu_k^{out}) - p_k^{out}\bI \right)  = \bo
	\label{order_k_Q1_prime}
\eeqa	
Similarly, the PDEs for $Q_2$ are
\beqa
	O(1)&:&\qquad -\nabla p_0^{in}=-\be \label{p0_prime}	\\
	O(z^{k}),\,k\geq 1 &:&\text{div}_{\by} \left( 2\mu_1 e(\bu_{k-1}^{in}) - p_k^{in}\bI \right)  = \bo \label{order_k_Q2_prime}
 \eeqa
The interface condition \eqref{interface_cond_3} remains the same for the small $|z|$ case while \eqref{order_m1_interface_cond} and \eqref{order_k_interface_cond}  now read
	\begin{eqnarray}
	&&    \bn \times \bn \times \left[ \left( \left( 2\mu_1 e(\bu_0^{out}) - p_0^{out}\bI \right)\cdot\bn - ( - p_0^{in}\bI   \right) \cdot\bn \right] = \bo \label{interface_prime_0}\\
	   && \bn \times \bn \times \left[ \left( \left( 2\mu_1 e(\bu_k^{out}) - p_k^{out}\bI \right) 
	    - \left( 2\mu_1 e(\bu_{k-1}^{in})- p_k^{in}\bI   \right)\right) \cdot\bn \right]= \bo,\, k\ge 1
	\end{eqnarray}
The first equation to be solved is \eqref{p0_prime}, whose solution is simply
\begin{equation}
p_0^{in}(\by)=\be\cdot \by+{c}-\int_{Q_2} (\be\cdot \by+{c}) d\by \mbox{ in } Q_2
\end{equation}
where ${c}$ is a constant. The next problem is the system of \eqref{order_0_Q1_prime} and \eqref{interface_prime_0}. Similar to the calculation in Lemma \ref{lemma_2}, the weak formulation of this system is: Find $u_0^{out} \in H(Q_1)$ such that for all $\bphi\in H(Q_1)$ and $\pi_0^{out}:=2\mu_1 e(\bu_0^{out}) - p_0^{out}\bI$
\beqs
 - \int_{\Gamma} (\bar{\Phi} \cdot ( \pi_0^{out}+p_0^{in}\bI)-p_0^{in}\bI)\cdot \bn dS - \int_{Q_1} 2\mu_1 e(\bu_0^{out}) : \overline{e(\Phi)} d\by = \int_{Q_1} -\be \cdot \bar{\Phi} d\by
\eeqs
Since $\bphi\cdot \bn=0$ and $p_0^{in} \bI \cdot \bn$ is parallel to $\bn$, \eqref{interface_prime_0} implies the integral on $\Gamma$ vanishes. Hence by {the} Lax-Milgram lemma, we have 
\beq
\|\bu_0^{out}\|_{Q_1}\le \frac{\sqrt{|Q_1|}}{B_1(Q_{{1}})}
\eeq
The system for $\bu_{k-1}^{in}$, $k\ge 1$ (inner problem) is to find $\bu_{k-1}^{in}\in H(Q_2)$ with given $\bu_0^{out}\in H(Q_1)$ such that
\beq
\label{bubble_in_prob}
\left\{
\begin{split}
\text{div} \left( 2\mu_1 e(\bu_{k-1}^{in}) - p_k^{in}\bI \right)  = \bo \mbox{ in } Q_2\\
\bu_{k-1}^{in}|_\Gamma=	\bu_{k-1}^{out}|_\Gamma    
\end{split}
\right.
\eeq
With an argument similar to the derivation of Lemma \ref{lemma_1}, the following estimate can be derived for system \eqref{bubble_in_prob}
\begin{lemma}
	Let $Q_2$ satisfy the same assumption in Lemma \ref{lemma_1}. For any given vector field $\bu^{out} \in H(Q_1)$, there exists a unique weak solution $\bu^{in}(\by)\in H(Q_2)$ s.t.
	\beqa
	\left\lbrace
	\begin{split}
	    \text{div}_{\by} \left( 2\mu_1 e(\bu^{in}) - p^{in}\bI \right) &= \mathbf{f}^{in} \quad \text{ in } Q_2\\
	    \bu^{in} &= \bu^{out} \quad \text{ on }\Gamma
	\end{split}
	\right.
	\label{lemma_1_eqn_prime}
\\
 \norm{\bu^{in}}_{Q_2} \leq \frac{1}{B_1(Q_{{2}})} \norm{\mathbf{f}^{in}}_{L^2(Q_2)} + 2E_2\norm{\bu^{out}}_{Q_1}.	
 \label{bubble_in}
	\eeqa
	where $E_2>1$ is the constant {associated with the extension operator} $T$, $\|T(\bphi)\|_Q\le E_2 \| \bphi\|_{Q_{{1}}}$ for all $\bphi\in H(Q_{{1}})$ and $T(\bphi)$ decays rapidly to 0  inside $Q_2$. {Note that the periodic condition of space $H(Q_1)$ implies  $\int_\Gamma \bu^{out} \cdot \bn \,dS=0$.}
	\label{lemma_1_prime} 
\end{lemma}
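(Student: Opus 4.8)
The plan is to replay the proof of Lemma \ref{lemma_1} almost verbatim, with the roles of $Q_1$ and $Q_2$ interchanged: there one produced $\bu^{out}\in H(Q_1)$ from prescribed data on $\Gamma$, whereas here we want $\bu^{in}\in H(Q_2)$ whose trace on $\Gamma$ equals that of $\bu^{out}$. The only structural change is that the relevant divergence-free extension now carries data from $Q_1$ into the inclusion $Q_2$, so its norm is controlled by the constant $E_2$ rather than $E_1$, and Korn's inequality is invoked with the constant $B_1(Q_2)$ attached to $Q_2$. The argument is therefore: lift the inhomogeneous boundary data by an extension, recast the residual problem as a homogeneous-Dirichlet variational problem on $\mathring{H}(Q_2)$, and close the estimate with Lax--Milgram.

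Concretely, I would first lift the boundary condition using the extension operator $T$ quantified in the statement: set $\tilde{\bu}^{in}\defeq T(\bu^{out})\vert_{Q_2}$, so that $\tilde{\bu}^{in}\in H(Q_2)$ is divergence free, equals $\bu^{out}$ on $\Gamma$, and obeys $\norm{\tilde{\bu}^{in}}_{Q_2}\le \norm{T(\bu^{out})}_Q \le E_2\norm{\bu^{out}}_{Q_1}$. Writing $\mathring{\bu}^{in}\defeq \bu^{in}-\tilde{\bu}^{in}\in \mathring{H}(Q_2)$, the system \eqref{lemma_1_eqn_prime} reduces to $\text{div}_{\by}(2\mu_1 e(\mathring{\bu}^{in})-p^{in}\bI)=\mathbf{f}^{in}-\mu_1\Delta\tilde{\bu}^{in}$ in $Q_2$ with homogeneous data on $\Gamma$, whose weak form is to find $\mathring{\bu}^{in}\in\mathring{H}(Q_2)$ with $\int_{Q_2}2\mu_1 e(\mathring{\bu}^{in}):\overline{e(\Phi)}\,d\by = -\int_{Q_2}\mathbf{f}^{in}\cdot\overline{\Phi}\,d\by - \int_{Q_2}2\mu_1 e(\tilde{\bu}^{in}):\overline{e(\Phi)}\,d\by$ for all $\Phi\in\mathring{H}(Q_2)$. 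The form on the left is exactly the $\mathring{H}(Q_2)$ inner product, hence bounded and coercive with constant $1$; bounding the right-hand side by $(\norm{\mathbf{f}^{in}}_{L^2(Q_2)}/B_1(Q_2)+\norm{\tilde{\bu}^{in}}_{Q_2})\norm{\Phi}_{Q_2}$ (Cauchy--Schwarz together with the Korn bound $\norm{\Phi}_{L^2(Q_2)}\le \norm{\Phi}_{H^1(Q_2)}\le B_1(Q_2)^{-1}\norm{\Phi}_{Q_2}$) and applying Lax--Milgram yields a unique $\mathring{\bu}^{in}$ with $\norm{\mathring{\bu}^{in}}_{Q_2}\le B_1(Q_2)^{-1}\norm{\mathbf{f}^{in}}_{L^2(Q_2)}+\norm{\tilde{\bu}^{in}}_{Q_2}$. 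Setting $\bu^{in}=\tilde{\bu}^{in}+\mathring{\bu}^{in}$ and using the triangle inequality then gives \eqref{bubble_in} with the factor $2E_2$; uniqueness follows since the difference of two solutions lies in $\mathring{H}(Q_2)$ and solves the homogeneous variational problem, forcing it to vanish.

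The step I expect to require the most care is the very existence of the divergence-free extension $\tilde{\bu}^{in}$, since a solenoidal field in $Q_2$ with prescribed trace on $\Gamma=\partial Q_2$ exists only when the flux through $\Gamma$ vanishes, $\int_\Gamma \bu^{out}\cdot\bn\,dS=0$. This is exactly the compatibility condition flagged in the closing note of the statement: because $\bu^{out}\in H(Q_1)$ satisfies $\bu^{out}\cdot\bn=0$ on $\Gamma$ and is $Q$-periodic, integrating $\text{div}_{\by}\bu^{out}$ over $Q_1$ and cancelling the contributions on $\partial Q$ by periodicity gives $\int_\Gamma\bu^{out}\cdot\bn\,dS=0$, so the extension supplied by \cite{Kato_Mitrea_Ponce_Taylor_2000} can indeed be taken divergence free. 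I would also record that Korn's inequality is legitimate on $\mathring{H}(Q_2)$: a rigid displacement in $\mathcal{R}(Q_2)$ vanishing on the positive-measure surface $\Gamma$ must be identically zero, so $\mathring{H}(Q_2)\cap\mathcal{R}(Q_2)=\{\bo\}$ and the constant $B_1(Q_2)$ is well defined.
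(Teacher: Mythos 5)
Your proposal is correct and is essentially the paper's own argument: the paper gives no separate proof of Lemma \ref{lemma_1_prime}, stating only that it follows ``with an argument similar to the derivation of Lemma \ref{lemma_1},'' and your lift-by-divergence-free-extension, reduction to a homogeneous Dirichlet problem on $\mathring{H}(Q_2)$, and Lax--Milgram closure is exactly that adaptation with $Q_1$ and $Q_2$ interchanged and $E_1,B_1(Q_1)$ replaced by $E_2,B_1(Q_2)$. Your added remarks on the flux compatibility $\int_\Gamma \bu^{out}\cdot\bn\,dS=0$ and on $\mathring{H}(Q_2)\cap\mathcal{R}(Q_2)=\{\bo\}$ are accurate and fill in details the paper leaves implicit.
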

The system for $\bu_k^{out}$ and $p_k^{out}$  with given $\bu_{k-1}^{in}\in H(Q_2)$ and $p_k^{in}$, $k\ge1$ is
\beq
\label{bubble_out_prob}
\left\{
\begin{split}
\text{div}\left( 2\mu_1 e(\bu_k^{out}) - p_k^{out}\bI \right)  = \bo\\
	    \bn \times \bn \times \left[ \left( \left( 2\mu_1 e(\bu_k^{out}) - p_k^{out}\bI \right) 
	    - \left( 2\mu_1 e(\bu_{k-1}^{in})- p_k^{in}\bI   \right)\right) \cdot\bn \right]= \bo
\end{split}
\right.
\eeq
By an argument similar to the one for Lemma \ref{lemma_2}, the system above can be shown to satisfy the following estimate.
\begin{lemma}
	Let $Q_2$ satisfy the same assumption in Lemma \ref{lemma_1}. For any given pair of $\left( \bu^{in}, p^{in}\right) \in H(Q_2) \times L^2(Q_1)/ \mathbb{C}$ that satisfies \eqref{lemma_1_eqn_prime}, there exists a unique vector $\bu^{out}(\by;\B{f}^{out})\in H(Q_1)$ {solving} the following system
\beqa
	\left\lbrace
	\begin{split}
	&   \text{div} \left( 2\mu_1 e(\bu^{out}) - p^{out}\bI \right)  = \B{f}^{out} \text{ in }Q_1\\
	 &   \bn\times\bn\times\left[\left( \left( 2\mu_1 e(\bu^{in}) - p^{in}\bI \right) - \left( 2\mu_1 e(\bu^{out}) - p^{out}\bI \right)  \right)\cdot\bn\right] = \bo \text{ on }\Gamma
	\end{split}
	\right.
	\label{lemma_2_eqn_prime}
\\
		\norm{\bu^{out}}_{Q_1} \leq   \frac{E_2}{B_1(Q_{{1}})}\norm{\B{f}^{out}}_{L^2(Q_1)} + \frac{E_2}{B_1(Q_{{2}})}\norm{\B{f}^{in}}_{L^2(Q_2)} + E_2\norm{\bu^{in}}_{Q_2}
\label{bubble_out}
\eeqa
where $E_2$, $B_1$ depend only on $Q$ and $\Gamma$.		
	\label{lemma_2_prime} 
	\end{lemma}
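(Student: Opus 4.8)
The plan is to follow the proof of Lemma \ref{lemma_2} almost verbatim, but with the roles of $Q_1$ and $Q_2$ interchanged: now the unknown $\bu^{out}$ lives in $H(Q_1)$, the test functions are taken in $H(Q_1)$, and the boundary term on $\Gamma$ is transferred into $Q_2$ by the extension operator of Lemma \ref{lemma_1_prime} (the one carrying the constant $E_2$). Write $\bpi^{out}:=2\mu_1 e(\bu^{out})-p^{out}\bI$ and $\bpi^{in}:=2\mu_1 e(\bu^{in})-p^{in}\bI$. First I would take an arbitrary $\Phi\in H(Q_1)$, pair the equation $\text{div}\,\bpi^{out}=\B{f}^{out}$ with $\overline{\Phi}$, and integrate by parts over $Q_1$. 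The $\partial Q$-part of the boundary integral cancels by $Q$-periodicity and the pressure term drops because $\Phi$ is divergence free, so with $\bn$ the outward normal of $\partial Q_2$ (hence $-\bn$ outward for $Q_1$ along $\Gamma$) one is left with
\begin{equation*}
-\int_{\Gamma}(\bpi^{out}\cdot\bn)\cdot\overline{\Phi}\,dS-\int_{Q_1}2\mu_1 e(\bu^{out}):\overline{e(\Phi)}\,d\by=\int_{Q_1}\B{f}^{out}\cdot\overline{\Phi}\,d\by.
\end{equation*}

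The decisive simplification of the surface integral uses the two conditions on $\Gamma$. Because $\Phi\in H(Q_1)$ satisfies $\Phi\cdot\bn=0$, it is purely tangential, $\Phi=d(\by)\B{t}$; and the tangential traction condition in \eqref{lemma_2_eqn_prime} gives $(\bpi^{in}-\bpi^{out})\cdot\bn=C(\by)\bn$, so $\bpi^{out}\cdot\bn$ and $\bpi^{in}\cdot\bn$ share the same tangential component. Contracting against the tangential $\Phi$ therefore replaces the unknown traction by the known one, and the variational problem becomes: find $\bu^{out}\in H(Q_1)$ with
\begin{equation*}
-\int_{Q_1}2\mu_1 e(\bu^{out}):\overline{e(\Phi)}\,d\by=\int_{Q_1}\B{f}^{out}\cdot\overline{\Phi}\,d\by+\int_{\Gamma}(\bpi^{in}\cdot\bn)\cdot\overline{\Phi}\,dS,\qquad\forall\,\Phi\in H(Q_1).
\end{equation*}
The left-hand side is, up to sign, the $H(Q_1)$ inner product, hence bounded and coercive with constant $1$, so the Lax--Milgram Lemma yields a unique $\bu^{out}$ as soon as the right-hand side is shown to be a bounded antilinear functional of $\Phi$, with its operator norm controlling $\norm{\bu^{out}}_{Q_1}$.

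The heart of the argument, and the step I expect to be most delicate, is bounding the $\Gamma$-integral, since only the $Q_2$-data $(\bu^{in},p^{in},\B{f}^{in})$ are available there. I would extend $\Phi$ by the operator $T$ of Lemma \ref{lemma_1_prime}, so that $T(\Phi)$ is divergence free, decays into $Q_2$, and obeys $\norm{T(\Phi)}_{Q}\le E_2\norm{\Phi}_{Q_1}$; setting $\Phi^{in}:=T(\Phi)\big|_{Q_2}$, it agrees with $\Phi$ on $\Gamma=\partial Q_2$ and satisfies $\norm{\Phi^{in}}_{Q_2}\le E_2\norm{\Phi}_{Q_1}$. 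Applying the divergence theorem over $Q_2$ in reverse and using $\text{div}\,\bpi^{in}=\B{f}^{in}$ from \eqref{lemma_1_eqn_prime} together with the symmetry of $\bpi^{in}$ and $\text{div}\,\Phi^{in}=0$ gives
\begin{equation*}
\int_{\Gamma}(\bpi^{in}\cdot\bn)\cdot\overline{\Phi}\,dS=\int_{Q_2}\B{f}^{in}\cdot\overline{\Phi^{in}}\,d\by+\int_{Q_2}2\mu_1 e(\bu^{in}):\overline{e(\Phi^{in})}\,d\by.
\end{equation*}

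Cauchy--Schwarz, the norm equivalence \eqref{equi_norm} on $Q_2$, and the extension bound then control these by $\bigl(\tfrac{E_2}{B_1(Q_2)}\norm{\B{f}^{in}}_{L^2(Q_2)}+E_2\norm{\bu^{in}}_{Q_2}\bigr)\norm{\Phi}_{Q_1}$, while the volume term over $Q_1$ is bounded by $\tfrac{1}{B_1(Q_1)}\norm{\B{f}^{out}}_{L^2(Q_1)}\norm{\Phi}_{Q_1}\le\tfrac{E_2}{B_1(Q_1)}\norm{\B{f}^{out}}_{L^2(Q_1)}\norm{\Phi}_{Q_1}$ since $E_2>1$. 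Summing reproduces exactly the claimed estimate \eqref{bubble_out}, and coercivity with constant $1$ transfers it to $\norm{\bu^{out}}_{Q_1}$. Uniqueness is immediate as in Lemma \ref{lemma_2}: the difference of two solutions lies in $H(Q_1)$, and testing against itself forces its $H(Q_1)$-norm to vanish. Beyond this, the only genuine care needed is that the boundary pairing $\int_\Gamma(\bpi^{in}\cdot\bn)\cdot\overline{\Phi}\,dS$ be well defined and that $T(\Phi)$ be regular enough for the reverse divergence theorem on $Q_2$; these are exactly guaranteed by the $\c^{k,\sigma}$ hypothesis on $\partial Q_2$ and the extension result underlying Lemma \ref{lemma_1_prime}.
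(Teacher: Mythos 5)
Your proposal is correct and is exactly the argument the paper intends: the paper proves this lemma only by remarking that it follows ``by an argument similar to the one for Lemma \ref{lemma_2},'' and you have carried out precisely that mirrored argument — test functions in $H(Q_1)$, the tangential-traction condition used to swap $\bpi^{out}\cdot\bn$ for $\bpi^{in}\cdot\bn$ against a tangential $\Phi$, the $E_2$-extension of Lemma \ref{lemma_1_prime} to transfer the $\Gamma$-integral into $Q_2$, and Lax--Milgram — arriving at the stated bound \eqref{bubble_out} with the correct constants.
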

Equation \eqref{bubble_out_prob}, Lemma \ref{lemma_1_prime}, Equation \eqref{bubble_in_prob} and Lemma \ref{lemma_2_prime} imply that for all $k\ge 0$, we have $\|\bu_k^{in}\|_{Q_2}\le 2E_2 \|\bu_k^{out}\|_{Q_1}$ and $\|\bu_{k+1}^{out}\|_{Q_1}\le E_2 \|\bu_k^{in}\|_{Q_2}$. Therefore,
\beqa
\| \bu^{in}_k\|_{Q_2} \le 
\frac{(2E_2^2)^{k+1}}{E_2} \|\bu_0^{out}\|_{Q_1}\le {(2E_2^2)^{k+1}}\left( \frac{|Q_1|}{E_2B_1(Q_{{1}})}\right)\\
\| \bu^{out}_k\|_{Q_1} \le 
 (2E_2^2)^k  \|\bu_0^{out}\|_{Q_1}\le (2E_2^2)^k \left( \frac{|Q_1|}{B_1(Q_1)}\right)
\eeqa
Therefore, the series in  \eqref{u_in_prime} and \eqref{u_out_prime} converge uniformly in the dis{k} $|z|<\frac{1}{2(E_2)^2}$ to an analytic function in $Q_2$ and $Q_1$, respectively. The limit functions  $\bu_{null}^{in}(\by,\be,z)$, $\bu_{null}^{out}(\by,\be,z)$  and  the corresponding permeability $K_{ij}(z)$ in \eqref{permeability_second_form} are analytic at $z=0$. Define the permeability ('B' for 'bubbles)
\beq
K_{ij}^{(B)}:=\int_Q [\chi_1 \bu_0^{out}(\by;\be_i) + \chi_2  \bu_0^{in}(\by;\be_i)]\cdot \be_j\, d\by
\eeq 
then the following estimate{,} valid for $|z|<\frac{1}{2E_2^2}${,} holds
\beqa
|K_{ij}(z)-K^B_{ij}| 
\le \frac{|Q_1|(1+2E_2)}{{B_1(Q) B_1(Q_1)}}\left(\frac{2E_2^2|z|}{1-2E_2^2|z|} \right) =O(|z|). \label{small_z_asymp}
\eeqa    
In conclusion,  $\bK(z)$ in \eqref{permeability_def}  is analytic for $z\in \mathbb{C}\setminus[-2E_1^2,-\frac{1}{2E_2^2}]$, $E_1, E_2\ge 1$. In the next section, and integral representation formula (IRF) for $\bK(z)$ will be derived in two different ways.
\section{Integral representation of permeability $\bK(z)$}
\label{Section3}
We first observe two properties of $\bK$ implied by  \eqref{permeability_second_form}.
\begin{proposition}
\beqa
\frac{\bK(z)-\bK^*(z)}{z-\bar{z}}\le 0 \mbox{ if } Im(z)\ne 0 \label{prop_1}\\
\bK(x)\ge 0 \mbox{ for }x>0  \label{prop_2}
\eeqa
\label{prop_K_z}
\end{proposition}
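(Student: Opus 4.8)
The plan is to read both inequalities as statements about Hermitian forms and to collapse each matrix to a single scalar integral by superposition. Fix $\boldsymbol{\xi}=(\xi_1,\dots,\xi_n)\in\C^n$ and set $\bu^{\boldsymbol{\xi}}:=\sum_{j}\xi_j\bu^{j}(z)$. Since the weak cell problem \eqref{variational_form_cell_prob} is linear in the forcing $\be_k$, the field $\bu^{\boldsymbol{\xi}}$ is exactly the solution driven by $\sum_j\xi_j\be_j$, and linearity of $e(\cdot)$ gives $\sum_j\xi_j\,e(\bu^j)=e(\bu^{\boldsymbol{\xi}})$ and $\sum_i\bar\xi_i\,\overline{e(\bu^i)}=\overline{e(\bu^{\boldsymbol{\xi}})}$. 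Everything then follows from the two representations in \eqref{permeability_second_form} (and the adjacent one for $(K^*)_{ij}$), together with the elementary identities $\mu(\by;\bar z)=\overline{\mu(\by;z)}$ and $A:B=B:A$ for the (unconjugated) Frobenius product.

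For \eqref{prop_1} I would subtract the two representations. Because the one-conjugate products $\overline{e(\bu^i)}:e(\bu^j)$ and $e(\bu^j):\overline{e(\bu^i)}$ coincide, the difference only sees the viscosity factor:
\[
K_{ij}(z)-(K^*)_{ij}(z)=\int_Q 2\bigl(\mu(\by;\bar z)-\mu(\by;z)\bigr)\,\overline{e(\bu^i)}:e(\bu^j)\,d\by=(\bar z-z)\int_Q 2\mu_1\chi_2\,\overline{e(\bu^i)}:e(\bu^j)\,d\by,
\]
using $\mu(\by;\bar z)-\mu(\by;z)=(\bar z-z)\mu_1\chi_2$. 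Dividing by $z-\bar z$ and contracting with $\boldsymbol{\xi}$ (i.e.\ forming $\boldsymbol{\xi}^*(\,\cdot\,)\boldsymbol{\xi}$) collapses the double sum into $-\int_Q 2\mu_1\chi_2\,\overline{e(\bu^{\boldsymbol{\xi}})}:e(\bu^{\boldsymbol{\xi}})\,d\by=-\int_Q 2\mu_1\chi_2\,\lvert e(\bu^{\boldsymbol{\xi}})\rvert^2\,d\by\le 0$, since $\mu_1>0$ and $\chi_2\ge 0$. As this holds for every $\boldsymbol{\xi}$, the matrix $(\bK-\bK^*)/(z-\bar z)$ is negative semidefinite. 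I would first record the structural fact that makes the ordering meaningful: $\bK-\bK^*$ is skew-Hermitian while $z-\bar z=2i\,\mathrm{Im}(z)$ is purely imaginary, so the quotient is Hermitian.

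For \eqref{prop_2} I specialize to $z=x>0$. The symmetry relation $\bu^k(\by;\bar z)=\overline{\bu^k(\by;z)}$ established right after \eqref{variational_form_cell_prob} forces $\bu^k(\by;x)=\overline{\bu^k(\by;x)}$, so the solution is real-valued, and $\mu(\by;x)=\mu_1\chi_1+x\mu_1\chi_2>0$ pointwise. Hence \eqref{permeability_second_form} reduces to $K_{ij}(x)=\int_Q 2\mu(\by;x)\,e(\bu^i):e(\bu^j)\,d\by$, and contracting with $\boldsymbol{\xi}$ yields $\boldsymbol{\xi}^*\bK(x)\boldsymbol{\xi}=\int_Q 2\mu(\by;x)\,\lvert e(\bu^{\boldsymbol{\xi}})\rvert^2\,d\by\ge 0$, which is the asserted positive semidefiniteness.

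I do not anticipate a genuine obstacle: the content is entirely in unwinding the two quadratic-form representations of $\bK$ and $\bK^*$. The only places that require care are the bookkeeping of complex conjugates—so that the factor $\bar z-z$ is carried by the \emph{viscosity difference} rather than by the strain product—the verification that the quotient in \eqref{prop_1} is Hermitian, and the justification that the superposition $\bu^{\boldsymbol{\xi}}$ really is the cell solution for the combined forcing $\sum_j\xi_j\be_j$, which is immediate from linearity and the uniqueness supplied by Lax–Milgram.
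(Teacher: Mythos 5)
Your proposal is correct and follows essentially the same route as the paper: both subtract the two quadratic-form representations so that only the viscosity contrast on $Q_2$ survives with the factor $\bar z-z$, and both establish semidefiniteness by contracting with $\boldsymbol{\xi}$ and recognizing the result as $-\|\bu^{\boldsymbol{\xi}}\|_{Q_2}^2$ (respectively $\|\bu^{\boldsymbol{\xi}}\|_{Q_1}^2+x\|\bu^{\boldsymbol{\xi}}\|_{Q_2}^2$ for \eqref{prop_2}). Your extra remarks on Hermiticity of the quotient and on the real-valuedness of $\bu^k(\by;x)$ are harmless elaborations of what the paper leaves implicit.
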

\begin{proof} \quad Note that $K_{ij}(z)-(K^*)_{ij}(z)=  {2\mu_1}(\overline{z}-z)\int_{Q_2}  {e( \bu^j(z))} : \overline{e(\bu^i(z))}  \,d\by$. Hence
\[
\frac{K_{ij}(z)-K^*_{ij}(z)}{z-\overline{z}}= {-}{2\mu_1} \int_{Q_2}  {e(\bu^j(z))} : \overline{e( \bu^i(z))}\, d\by= {-} (\bu^j,\bu^i)_{Q_2}=:-A_{ij}
\]
The matrix $\pmb{A}$ is obviously Hermitian and for any $\pmb{\xi}\in \mathbb{C}^n$, we have $\overline{\xi_i} A_{ij} {\xi_j}= (\xi_j\bu^j,\xi_i \bu^i)_{Q_2}\ge 0$. This proves \eqref{prop_1}.  Recall that $K_{ij}(x)= \left(  (\bu^j,\bu^i)_{Q_1}+x  (\bu^j,\bu^i)_{Q_2} \right)$. With a similar argument, \eqref{prop_2} follows.
\end{proof}
With these two properties and the fact that $\bK$ is holomorphic in $\mathbb{C}\setminus(-\infty,0]$,  the characterization theorem for matrix-valued function{s belonging to the} Stieltjes class \cite{dyukarev1986multiplicative} implies that there exists a monotonically increasing matrix-valued function $\pmb{\sigma}(t)$ such that the following integral representation formula (IRF) holds for $z\in \mathbb{C}\setminus(-\infty,0]$
\[
\bK(z)=\pmb{A}+\frac{\pmb{C}}{z}+\int_{+0}^\infty \frac{1}{z+t} d\pmb{\sigma}(t)
\]
where $\pmb{A}\ge 0$, $\pmb{C}\ge 0$, $\int_{+0}^\infty \frac{1}{1+t} d\pmb{\sigma}(t){:=\displaystyle{\lim_{\epsilon\downarrow 0}\int_{\epsilon}^\infty }\frac{1}{1+t} d\pmb{\sigma}(t)}<\infty$ and $\pmb{A}+\pmb{C}+\int_{+0}^\infty \frac{1}{1+t} d\pmb{\sigma}(t)>0$. Since $\bK(0)=\bK^{(B)}$, we must have $\pmb{C}=\pmb{0}$. Also, $\pmb{K}(\infty)=\pmb{K}^{(D)}$ implies $\pmb{A}=\pmb{K}^{(D)}$
\[
\bK(z)=\pmb{K}^{(D)}+\int_{\frac{1}{2E_2^2}}^{2E_1^2}\frac{1}{z+t} d\pmb{\sigma}(t)
\]
Therefore, for real valued $z$, $\bK(z)$ is decreasing as $z$ increases, {i.e., $\bK(x_1)-\bK(x_2)$ is negative semidefinite if $x_1>x_2$}. To study how the measure $d\pmb{\sigma}$ is related to the microstructure, we derive the spectral representation of $\bK(z)$ by using the underlying system \eqref{cell_prob}.%
\subsection{Spectral representation of $\bK(z)$}
Adding $\int_{Q_2}2\mu_1 e( \bu^k) : \overline{e(\bv)}  d\by$ to both sides of \eqref{variational_form_cell_prob}, we have
\begin{equation}
    \int_{Q}2\mu_1 e( \bu^k) : \overline{e(\bv)}  d\by = -\frac{1}{s}\int_{Q}2\mu_1  \chi_2 e( \bu^k) : \overline{e(\bv)}  d\by + \int_{Q} \be_k\cdot \bar{\bv}  d\by
    \label{equate_LHS_RHS_spectral_rep}
\end{equation}
where the new variable $s$ is defined as
\[
s:=\frac{1}{z-1}
\]
Let  $\Delta_{\#}^{-1}$ be the operator that solves for $\B{w}(\by;\B{f}) \in H(Q)$ in the following variational formulation
\begin{equation}
    \int_{Q} 2\mu_1 e(\bw):\overline{e(\bv)} d\by = \int_{Q}\B{f}\cdot \bar{\bv} d\by
\end{equation}
where $\B{f} \in L^2(Q)$ and $Q$-periodic. In other words, solution $\bw(\by) = \Delta_{\#}^{-1} \B{f} \in H(Q)$ is a weak solution to the cell problem
\begin{equation}
\left \{ \begin{split} -\mu_1\Delta \bw  &= \B{f} \quad \text{in } Q_1 \cup Q_2\\
\llbracket \bpi \rrbracket\bn  &= \left( \llbracket \bpi \bn\rrbracket \cdot \bn \right) \bn \text{ on } \Gamma
\end{split}\right.
\end{equation}
In order to get the spectral representation, we apply $\Delta_{\#}^{-1}$ on both sides of \eqref{equate_LHS_RHS_spectral_rep} and symbolically represent the resulted equations as 
\[
 \bw_1 = {-\frac{1}{s}}\bw_2 + \bw_3
\]
Then clearly, we have $ \bw_1 = \bu^k $ and $  \bw_3 =\Delta_{\#}^{-1}\be_k$. Observe that $\bw_2$ solves
    \begin{equation}
        \int_{Q} 2\mu_1e(\bw_2):\overline{e(\bv)} d\by = \int_{Q} 2\mu_1 \chi_2 e( \bu^k) : \overline{e(\bv)}  d\by  \mbox{ for all } \bv\in H(Q)
    \label{pre-definition_of_gamma_chi}
    \end{equation}
Define the operator $\Gamma_{\chi}$ such that $\bw_2=\Gamma_{\chi} \bu^k$ and \eqref{pre-definition_of_gamma_chi} can be expressed as

    \begin{equation}
      (\Gamma_\chi \bu^{{k}},\bv)_Q = \int_{Q} 2\mu_1 \chi_2 e( \bu^{{k}}) : \overline{e(\bv)}  d\by \mbox{ for all } \bv\in H(Q).
        \label{variational_form_gamma_chi}
    \end{equation}
    {The subscript $\chi$ is used to signify the dependence of $\Gamma_\chi$ on $\chi_2$, the characteristic function of $Q_2$.} Clearly, $\Gamma_\chi$ is self-adjoint with respect to the inner product $(\cdot,\cdot)_Q$ because
    \[
   (\Gamma_\chi \bu,\bv)_Q=\overline{\int_{Q}  2\mu_1\chi_2 e( \bv) : \overline{e(\bu)}  d\by}=  \overline{(\Gamma_\chi \bv,\bu)_Q } =  {(\bu,\Gamma_\chi \bv)_Q. }
    \]
Formally, we have $\Gamma_\chi \bu=\triangle_\#^{-1}(\nabla\cdot \chi_2 e(\bu))$.  Now \eqref{equate_LHS_RHS_spectral_rep} becomes 
\begin{equation}
    \bu^k = -\frac{1}{s} \Gamma_{\chi} \bu^k + \Delta_{\#}^{-1}\be_k
    \Leftrightarrow  \left( I +\frac{\Gamma_{\chi}}{s} \right)\bu^k = \Delta_{\#}^{-1}\be_k
\label{needed_after_self_adjoint_lemma}
\end{equation}
\begin{proposition}
The self-adjoint operator $\Gamma_{\chi}$ defined in \eqref{variational_form_gamma_chi} is positive and bounded with $\norm{\Gamma_{\chi}}\le 1$.
	\label{lemma_self_adj_gamma_chi}
\end{proposition}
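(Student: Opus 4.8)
The plan is to read off both properties directly from the defining identity \eqref{variational_form_gamma_chi} by specializing the test function, so essentially no hard analysis is required. First I would record that $\Gamma_\chi$ is well defined: for a fixed $\bu\in H(Q)$ the antilinear functional $\bv\mapsto \int_{Q} 2\mu_1\chi_2\, e(\bu):\overline{e(\bv)}\,d\by$ is bounded on $H(Q)$, since the semi-inner product $\langle\bu,\bv\rangle_{Q_2}:=\int_{Q_2}2\mu_1\, e(\bu):\overline{e(\bv)}\,d\by$ obeys the Cauchy--Schwarz inequality and $\langle\bu,\bu\rangle_{Q_2}=\int_{Q_2}2\mu_1|e(\bu)|^2\,d\by\le\|\bu\|_Q^2$, whence $|\langle\bu,\bv\rangle_{Q_2}|\le\|\bu\|_Q\|\bv\|_Q$. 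The Riesz representation theorem then supplies a unique element $\Gamma_\chi\bu\in H(Q)$, and self-adjointness with respect to $(\cdot,\cdot)_Q$ has already been verified in the text preceding the statement.

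For positivity I would simply take $\bv=\bu$ in \eqref{variational_form_gamma_chi}. Since $\chi_2$ is the indicator of $Q_2$ and $e(\bu):\overline{e(\bu)}=\sum_{i,j}|e_{ij}(\bu)|^2\ge 0$ pointwise, this gives
\[
(\Gamma_\chi\bu,\bu)_Q=\int_{Q_2} 2\mu_1\, e(\bu):\overline{e(\bu)}\,d\by\ge 0,
\]
so $\Gamma_\chi$ is a positive operator.

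For the operator-norm bound I would invoke the standard fact that a bounded self-adjoint operator on a Hilbert space satisfies $\|\Gamma_\chi\|=\sup_{\bu\ne\bo}(\Gamma_\chi\bu,\bu)_Q/\|\bu\|_Q^2$, which reduces the estimate to the scalar quadratic form already computed. Because $0\le\chi_2\le 1$ and the integrand is nonnegative, dropping the (nonnegative) contribution from $Q_1$ yields
\[
(\Gamma_\chi\bu,\bu)_Q=\int_{Q_2} 2\mu_1\, e(\bu):\overline{e(\bu)}\,d\by\le \int_{Q} 2\mu_1\, e(\bu):\overline{e(\bu)}\,d\by=\|\bu\|_Q^2.
\]
Dividing by $\|\bu\|_Q^2$ and taking the supremum over $\bu\ne\bo$ gives $\|\Gamma_\chi\|\le 1$.

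There is no genuine obstacle here: the whole argument is a one-line specialization of the variational definition together with the pointwise inequality $e(\bu):\overline{e(\bu)}\ge 0$ and the bound $\chi_2\le 1$. The only point worth stating with care is the identification $\|\Gamma_\chi\|=\sup_{\|\bu\|_Q=1}(\Gamma_\chi\bu,\bu)_Q$ for self-adjoint operators, as this is precisely what converts the scalar inequality $(\Gamma_\chi\bu,\bu)_Q\le\|\bu\|_Q^2$ into the desired operator estimate.
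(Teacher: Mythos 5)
Your proposal is correct and follows essentially the same route as the paper: choose $\bv=\bu$ in \eqref{variational_form_gamma_chi} and sandwich the quadratic form between $0$ and $\|\bu\|_Q^2$. You are merely more explicit about the two supporting facts the paper leaves implicit — well-definedness of $\Gamma_\chi$ via Cauchy--Schwarz and Riesz representation, and the identity $\norm{\Gamma_{\chi}}=\sup_{\bu\ne\bo}(\Gamma_\chi\bu,\bu)_Q/\|\bu\|_Q^2$ for bounded self-adjoint operators, which converts the quadratic-form bound into the operator-norm bound.
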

\begin{proof}
 \quad  It can be proved by choosing $\bv=\bu$ in \eqref{variational_form_gamma_chi} and observe that 
  $
  0\le \int_{Q_2} 2\mu_1e(\bu):\overline{e(\bu)} d\by \le \int_{Q} 2\mu_1e(\bu):\overline{e(\bu)} d\by= (\bu,\bu)_Q
   $.
\end{proof}
\begin{theorem}
	For  $|s|>1$, the solution $\bu^k\in H(Q)$ admits a series representation
	\begin{equation}
	\bu^k(\by;s) =  \sum_{m = 0}^{\infty}\left( -\frac{1}{s} \right)^{m}\left(\Gamma_{\chi} \right)^{m} \Delta_{\#}^{-1}\be_k 
	\label{u_k_series_explicit}
	\end{equation}
	and  the components of $\bK$ can be represented by the following IRF
	\begin{equation}
	    K_{kl}(s) = s\int_0^1\int_{Q}\frac{\left(\tilde{M}(d\lambda)\Delta_{\#}^{-1}\be_k\right)_l}{s+\lambda}\, d\by,\quad k,l=1,\dots,n,
	    \label{integral_rep_K_new}
	\end{equation}
	for some projection-valued measures $\tilde{M}(d\lambda)$ and a series representation 
	\begin{equation*}
	K_{kl}(s) =  \int_{Q} \left( \Delta_{\#}^{-1}\be_k\right)_l d\by + \sum_{m = 1}^{\infty} \frac{\tilde{\lambda}_{kl}^{m}}{(-s)^{m}}\quad \mbox{with }\tilde{\lambda}_{kl}^{m} :=   \int_{Q} \left( \left(\Gamma_{\chi} \right)^{m} \Delta_{\#}^{-1}\be_k \right)_l \,d\by.
	\end{equation*}
\end{theorem}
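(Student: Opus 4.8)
The plan is to read \eqref{needed_after_self_adjoint_lemma} as the operator equation $\left(I+\Gamma_\chi/s\right)\bu^k=\Delta_\#^{-1}\be_k$ in $H(Q)$ and to invert $I+\Gamma_\chi/s$ in two complementary ways: by a Neumann series, which yields the power-series form, and by the spectral theorem, which yields the integral representation. Both rely only on Proposition \ref{lemma_self_adj_gamma_chi}, i.e.\ that $\Gamma_\chi$ is self-adjoint, positive, and satisfies $\norm{\Gamma_\chi}\le 1$.

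First I would establish \eqref{u_k_series_explicit}. Since $\norm{\Gamma_\chi/s}\le 1/|s|<1$ for $|s|>1$, the operator $I+\Gamma_\chi/s$ is boundedly invertible and its inverse is the norm-convergent Neumann series $\sum_{m\ge 0}(-1/s)^m(\Gamma_\chi)^m$; applying it to $\Delta_\#^{-1}\be_k$ gives \eqref{u_k_series_explicit}, with convergence in $H(Q)$. To descend to $K_{kl}$ I would record that the linear functional $\bw\mapsto\int_Q(\bw)_l\,d\by$ is an $H(Q)$-inner product: taking $\B{f}=\be_l$ in the defining variational identity for $\Delta_\#^{-1}$ gives $\int_Q(\bw)_l\,d\by=(\bw,\Delta_\#^{-1}\be_l)_Q$. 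Using the symmetry $K_{kl}=K_{lk}$ (valid because for real $z$ the cell problem \eqref{cell_prob} is real and self-adjoint, so $\bK$ is symmetric on the real axis, and both sides are analytic), we obtain $K_{kl}(s)=\int_Q(\bu^k)_l\,d\by=(\bu^k,\Delta_\#^{-1}\be_l)_Q$. Substituting the Neumann series and interchanging the sum with the continuous functional $(\cdot,\Delta_\#^{-1}\be_l)_Q$ yields the stated series $K_{kl}(s)=\int_Q(\Delta_\#^{-1}\be_k)_l\,d\by+\sum_{m\ge 1}\tilde\lambda_{kl}^m/(-s)^m$ with $\tilde\lambda_{kl}^m=\int_Q\big((\Gamma_\chi)^m\Delta_\#^{-1}\be_k\big)_l\,d\by$.

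For the IRF \eqref{integral_rep_K_new} I would apply the spectral theorem to the bounded self-adjoint operator $\Gamma_\chi$, whose spectrum lies in $[0,1]$ by Proposition \ref{lemma_self_adj_gamma_chi}; this furnishes a resolution of the identity $\tilde M(d\lambda)$ on $[0,1]$ with $\Gamma_\chi=\int_0^1\lambda\,\tilde M(d\lambda)$. For $|s|>1$ one has $s+\lambda\ne 0$ for every $\lambda\in[0,1]$, so the functional calculus gives $\left(I+\Gamma_\chi/s\right)^{-1}=\int_0^1\frac{s}{s+\lambda}\,\tilde M(d\lambda)$. Inserting this into $K_{kl}(s)=\big(\left(I+\Gamma_\chi/s\right)^{-1}\Delta_\#^{-1}\be_k,\Delta_\#^{-1}\be_l\big)_Q$ and using the same inner-product identity once more to rewrite $\big(\tilde M(d\lambda)\Delta_\#^{-1}\be_k,\Delta_\#^{-1}\be_l\big)_Q=\int_Q\big(\tilde M(d\lambda)\Delta_\#^{-1}\be_k\big)_l\,d\by$ produces exactly \eqref{integral_rep_K_new}.

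The one genuinely delicate point is the interchange of the spatial integral $\int_Q(\cdot)_l\,d\by$ with the projection-valued spectral integral. Rewriting that functional as the inner product $(\cdot,\Delta_\#^{-1}\be_l)_Q$ dissolves the difficulty: the object that actually appears is the scalar complex measure $\lambda\mapsto\big(\tilde M(\lambda)\Delta_\#^{-1}\be_k,\Delta_\#^{-1}\be_l\big)_Q$ --- a genuine finite positive measure when $k=l$ --- to which the ordinary scalar spectral calculus and Fubini apply directly. The term-by-term integration in the series form is justified the same way, by operator-norm (hence $H(Q)$) convergence together with continuity of this functional. What remains is the routine check that the two representations agree where both converge, namely $|s|>1$, while \eqref{integral_rep_K_new} additionally supplies the analytic continuation to $s\notin[-1,0]$.
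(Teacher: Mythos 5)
Your proof is correct and follows essentially the same route as the paper: both invert $\left(I+\Gamma_\chi/s\right)$ using Proposition \ref{lemma_self_adj_gamma_chi}, obtaining the series via the Neumann/geometric expansion for $|s|>1$ and the IRF via the spectral resolution of the bounded self-adjoint operator $\Gamma_\chi$ on $[0,1]$. Your extra step of rewriting $\int_Q(\bw)_l\,d\by$ as the inner product $(\bw,\Delta_{\#}^{-1}\be_l)_Q$ to justify interchanging the spatial integral with the spectral measure is a welcome refinement of a point the paper leaves implicit, but it does not change the underlying argument.
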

\begin{proof}
\quad From  \eqref{needed_after_self_adjoint_lemma}, since $\Gamma_{\chi}$ is self-adjoint with norm bounded by 1, for $|s|>1$, the spectral theory for self-adjoint operator implies the existence of a projection-valued measure $\tilde{M}$ such that 
\beq
    \bu^k(\by;s) = \left( I + \frac{\Gamma_{\chi}}{s} \right)^{-1} \Delta_{\#}^{-1}\be_k 
    = {s}\int_0^1 \frac{\tilde{M}(d\lambda)\left(\Delta_{\#}^{-1}\be_k\right)}{s+\lambda}\label{uk_solution}
\eeq
Hence the $kl$-the element of permeability $\bK$ has the following IRF
\begin{equation}
    K_{kl}(s) =  \int_{Q} (\bu^k)_l d\by=s\int_0^1\int_{Q}\frac{\left(\tilde{M}(d\lambda)\Delta_{\#}^{-1}\be_k\right)_l}{s+\lambda} d\by
    \label{K_comp_exp}
\end{equation}
On the other hand, for $|s|>1$,  the geometric expansion of the middle termnear $s=\infty$ in  \eqref{uk_solution} results in the following expression 
\begin{equation}
		K_{kl}(s) =  \int_{Q} \left[  \sum_{m = 0}^{\infty}\left(-\frac{1}{s} \right)^{m}\left( \Gamma_{\chi} \right)^{m} \Delta_{\#}^{-1}\be_k \right]\cdot \be_l d\by= \sum_{m = 0}^{\infty} \frac{\tilde{\lambda}_{kl}^{m}}{(-s)^{m}}
	\end{equation}
	where  $\tilde{\lambda}_{kl}^{m}$ is defined as 
$	    \tilde{\lambda}_{kl}^{m} :=   \int_{Q} \left( \left(\Gamma_{\chi} \right)^{m} \Delta_{\#}^{-1}\be_k \right)_l d\by.
$
	\end{proof}
For the three-dimensional space $n=3$, the expansion \eqref{K_comp_exp} can be cast in the matrix form 
\beq
\bK(s)=\sum_{m = 0}^{\infty} \frac{\tilde{\bLam}_{m}}{(-s)^{m}}
\label{Matrix-K-spectral-expan}
\eeq
with the matrix-valued moments defined as  
\beq
\tilde{\bLam}_m:= 
\begin{pmatrix} \int_{Q}  \left(\Gamma_{\chi} \right)^{m} \Delta_{\#}^{-1}\be_1 d\by &   \int_{Q} \left(\Gamma_{\chi} \right)^{m} \Delta_{\#}^{-1}\be_2 d\by &
 \int_{Q}  \left(\Gamma_{\chi} \right)^{m} \Delta_{\#}^{-1}\be_3 d\by
 \label{geo_info_terms}
 \end{pmatrix}
\eeq
\subsection{Relationships between two representations and characterization of the microstructral information on permeability}
The calculation{s} in the previous section reveal that the variable $s:=\frac{1}{z-1}$ is the natural one to use. Because of this, we will consider $\bK$ as a function of $s$. Note that $s$ maps $(-\infty,0]$ on the $z$-plane to $[-1,0]$ on the $s$-plane.The following properties of $\bK(s)$ can be easily deduced from the results in Proposition \ref{prop_K_z}
.
\begin{enumerate}
\item
$\bK(s)$ is holomorphic in $\mathbb{C}\setminus{[-\frac{2E_2^2}{1+2E_2^2},-\frac{1}{1+2E_1^2}]}.$
\item
$\frac{\bK(s)-(\bK(s))^*}{s-\overline{s}}\ge 0$ for all  $Im{(s)}\ne 0$
\item
$\bK(s)\ge 0 \mbox{ for } \mathbb{R}\ni s>0$ because $s>0$ iff $\mathbb{R}\ni z>1$.  
\end{enumerate}

Then by the representation theorem in \cite[{Theorem 3.1}]{dyukarev1986multiplicative}, there exists a monotonically increasing matrix-valued function $\pmb{\sigma}(t)$, matrices $\pmb{A}\ge 0$ and $\pmb{C}\ge 0$ such that $\int_{+0}^\infty \frac{d\pmb{\sigma}}{1+t}<\infty$, $\pmb{A}+\pmb{C}+\int_{+0}^\infty \frac{d\pmb{\sigma}}{1+t}>0$ and 
\beq
\bK(s)=\pmb{A}+{\pmb{C}}{s}+\int_{+0}^\infty \frac{s}{s+t}d\pmb{\sigma}(t), 
\eeq
As $s\goto \infty$, $z\goto 1$ and hence $\bK\goto \bK(z=1)$. Therefore, we must have $\pmb{C}=\pmb{0}$. Moreover, $\pmb{A}=\bK(s=0)=\bK^{(D)}$. Also, since $\bK(s)$ is holomorphic in $\mathbb{C}\setminus {[-\frac{2E_2^2}{1+2E_2^2},-\frac{1}{1+2E_1^2}]}$, we have
\beq
\bK(s)=\pmb{K}^{(D)}+\int_{\frac{1}{1+2E_1^2}}^{\frac{2E_2^2}{1+2E_2^2}} \frac{s}{s+t}d\pmb{\sigma}(t), 
\label{IRF_Ks_prime}
\eeq
which is valid for all $s\in \mathbb{C}\setminus{[-\frac{2E_2^2}{1+2E_2^2},-\frac{1}{1+2E_1^2}]}\subset (-1,0)$.
To compare with \eqref{Matrix-K-spectral-expan}, which is valid only for $|s|>1$,  we expand \eqref{IRF_Ks_prime} near $s=\infty$ to obtain the following series expansion
\beq
\bK(s)=\bK^{(D)}+\sum_{m=0}^\infty (-1)^m \left( \frac{1}{s} \right)^{m+1} \boldsymbol{\mu}^\sigma_m
\eeq
where $\boldsymbol{\mu}^\sigma_m$ is the $m$-th moment of the measure $d\pmb{\sigma}$. Equating the coefficients term by term with \eqref{Matrix-K-spectral-expan} leads to the following relation between  $\boldsymbol{\mu}^\sigma_m$ and the 'geometrical information' coefficients in \eqref{geo_info_terms}
\begin{eqnarray}
\bK^{(D)}+\boldsymbol{\mu}^\sigma_0=\tilde{\bLam}_0={\bK(s=\infty)},\mbox{ i.e. }&&\boldsymbol{\mu}^\sigma_0=\bK(z=1)-\bK^{(D)} \\
&&\boldsymbol{\mu}^\sigma_{m}=\tilde{\bLam}_m, \, m\ge 1
\label{moment_relation}
\end{eqnarray}
{Recall that $\bK$ can be regarded as a function of $s$ as well as a function of $z$,  $s:=\frac{1}{z-1}$. }
In particular,  the first moment $\boldsymbol{\mu}^\sigma_1$ can be calculated explicitly as follows
\beqa
 \tilde{\lambda}_{kl}^{1}=(\Gamma_{\chi}\bu^k(\by;1),\bu^l(\by;1))_Q= 2\mu_1\int_Q \chi_2 e(\bu^k(\by;1)):\overline{e(\bu^l(\by;1))} \, d\by
\eeqa
\section{Numerical verification}
\label{Section4}
\def\b#1{{\bf{#1}}}\def\an#1{\begin{align}#1 \end{align}}
\def\ad#1{\begin{aligned}#1 \end{aligned}}
\def\a#1{\begin{align*}#1 \end{align*}}\def\t#1{\text{#1}}

The computational domain with $Q=(0,1)^2$, $Q_2=[1/4,3/4]^2$ and $Q_1=Q\setminus Q_2$ is illustrated in Figure \ref{sq}.
 is chosen in the first two numerical examples, \eqref{rough} and \eqref{slippery}. 

\begin{figure}[htb] \setlength\unitlength{1pt}
  \begin{center}  
  \begin{picture}(100,100)(0,0)
    \multiput(0,0)(100,0){2}{\line(0,1){100}}
    \multiput(0,0)(0,100){2}{\line(1,0){100}}
    \multiput(25,25)(50,0){2}{\line(0,1){50}}
    \multiput(25,25)(0,50){2}{\line(1,0){50}}
    \put(3,3){$Q_1$}\put(78,28){$\tilde\Gamma$} \put(-22,3){$Q:$}
    \put(28,28){$Q_2$} \put(46,90){\vector(1,0){13}} \put(60,87){$\b e^{(1)}$}  
    \end{picture} \end{center}
\caption{ \label{sq} Computational domain }
\end{figure}
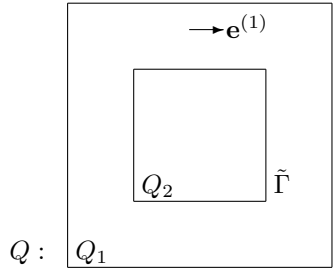

We consider three cases: (1) $Q_2$ is a solid obstacle, (2) $Q_2$ is a bubble,  
  and (3) $Q_2$ is another fluid.

For case (1), we find 
  $(\b u_1, p_1)\in \b V_1 \times P_1$, such that
\an{ \label{rough}  \left\{ \ad{
     (e(\b u_1), e(\b v)) -(p_1, \t{div} \b v) &= (\b e_1,\b v) \quad \forall \b v\in \b V_1,\\
     (q,\t{div}\b u_1 ) & = 0   \qquad \forall q \in P_1,} \right.
  } where \a{ \b V_1 &= \{\b v\in H^1(Q_1)^2 \ \mid 
                       \b v|_{\partial Q_2}=\b 0,  \t{\ $\b v$ is } Q\t{-periodic} \}, \\
           P_1 &= \{q \in L^2_0(Q_1)  \ \mid 
                         q=\t{div} \b v \t{ \ for some $\b v\in \b V_1$ } \}.  }

For case (2),  we find $(\b u_2, p_2)\in \b V_2 \times P_2$, such that
\an{ \label{slippery}  \left\{ \ad{
     (e(\b u_2), e(\b v)) -(p_2, \t{div} \b v) &= (\b e_1,\b v) \quad \forall \b v\in \b V_2,\\
     (q,\t{div}\b u_2) & = 0   \qquad \forall q \in P_2,} \right.
  } where \a{ \b V_2& = \{\b v\in H^1(Q_1)^2 \ \mid 
    \b v\cdot \b n|_{\partial Q_2}= 0, \t{\ $\b v$ is } Q-\t{periodic} \}, \\
           P_2 &= \{q \in L^2_0(Q_1)  \ \mid 
                         q=\t{div} \b v \t{ \ for some $\b v\in \b V_2$ } \}.   }

For case (3), we set $\mu_1=1$ and $\mu_2=\mu$.  We find $(\b u_3, p_3)\in \b V_3 \times P_3$, such that
\an{ \label{porous}  \left\{ \ad{
     (\mu e(\b u_3), e(\b v)) -(p_2, \t{div} \b v) &= (\b e_1,\b v) \quad \forall \b v\in \b V_3,\\
     (q,\t{div}\b u_2) & = 0   \qquad \forall q \in P_3,} \right.
  } where \a{ \b V_3& = \{\b v\in H^1(Q)^2 \ \mid 
    \b v\cdot \b n|_{\partial Q_2}= 0, \t{\ $\b v$ is } Q-\t{periodic} \}, \\
           P_3 &= \{q \in L^2_0(Q)  \ \mid 
                         q=\t{div} \b v \t{ \ for some $\b v\in \b V_3$ } \}.   }

The computation is done on square grids.  
The first level grid consists of 12 squares, for the first two cases. 
Each square is subdivided into
   4 sub-squares to get the next level grid, $\mathcal{T}_h=\{T\}$.
We use the $Q_{5,4}^{1,0}\times Q_{4,5}^{0,1}$ velocity finite element space with
   the $Q_{4,4}^{0,0}$ pressure finite element space.
  Here $Q_{5,4}^{1,0}$ means the space of polynomials of degree at most 5 in $y_1$
     and  of degree at most $4$ in $y_2$ which is 
     $C^1$ in $y_1$-direction and $C^0$ in $y_2$-direction.
That is, \a{Q_{5,4}^{1,0}&=\Big\{ u_1|_T  = \sum_{i=0}^5\sum_{j=0}^4 c_{ij} y_1^i y_2^j \ \Big| \
      u_1 \hbox{ and } \partial_{y_1} u_1 \in C^0(Q_1), \hbox{and } Q\hbox{-periodic} \Big\}, \\ 
      Q_{4,4}^{0,0}&=\Big\{ p|_T  =\sum_{i=0}^4\sum_{j=0}^4 c_{ij} y_1^i y_2^j \ \Big| \
      p \in C^0_0(Q), \hbox{and } Q\hbox{-periodic} \Big\}. }
We note that $\t{div} ( Q_{5,4}^{1,0}\times Q_{4,5}^{0,1}) = Q_{4,4}^{0,0}$.
Therefore, the finite element velocity is also pointwise divergence-free.
We plot the velocity field of these two problems in Figure \ref{2vec}.
We can see the magnitude of the latter is much bigger, as the resistance from a
   slippery bubble is much less. 

\begin{figure}[htb]\begin{center}\setlength\unitlength{1in}
    \begin{picture}(4.7,2.3)
 \put(0,0){\includegraphics[width=2.3in]{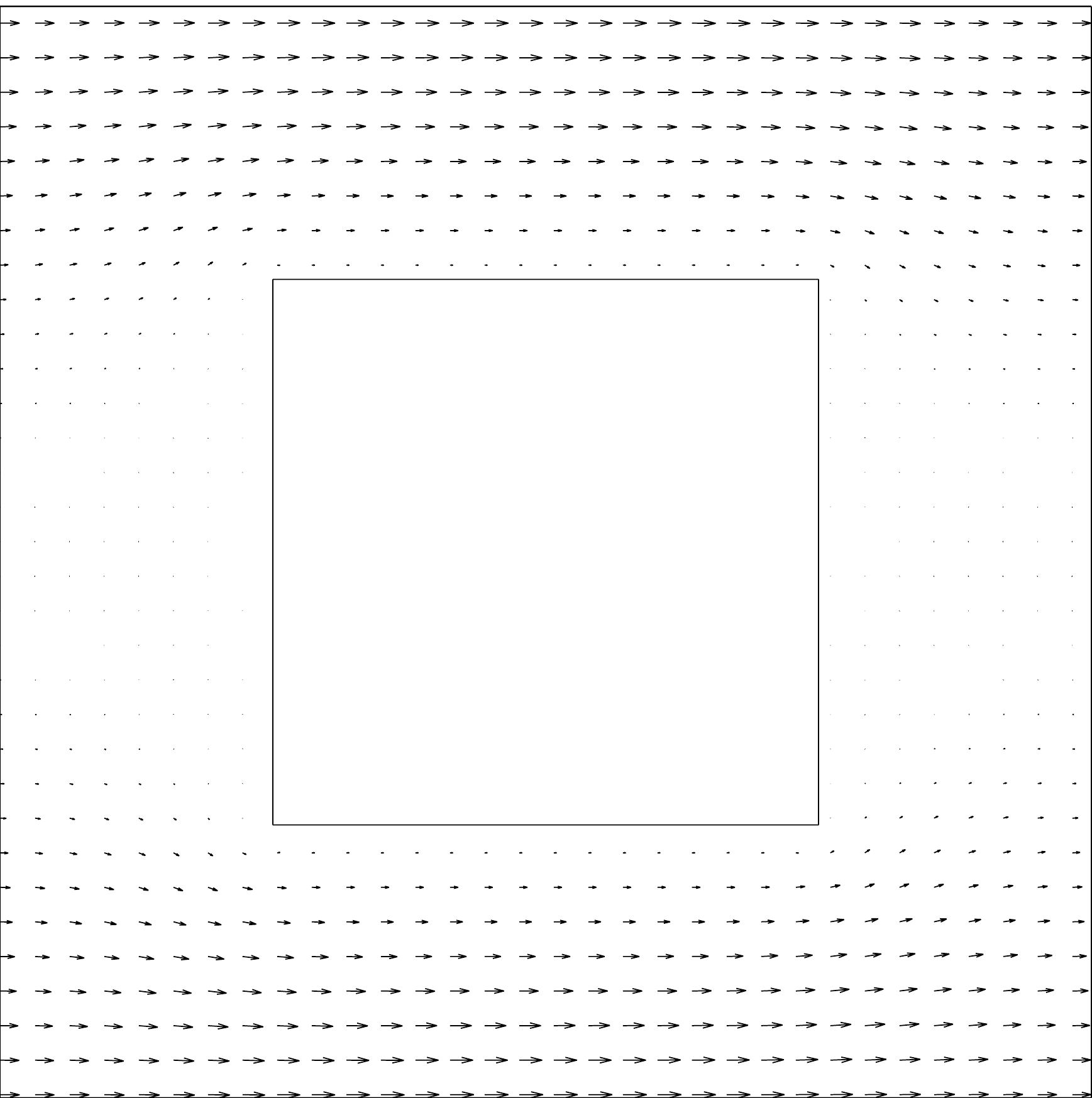}}
  \put(2.4,0){\includegraphics[width=2.3in]{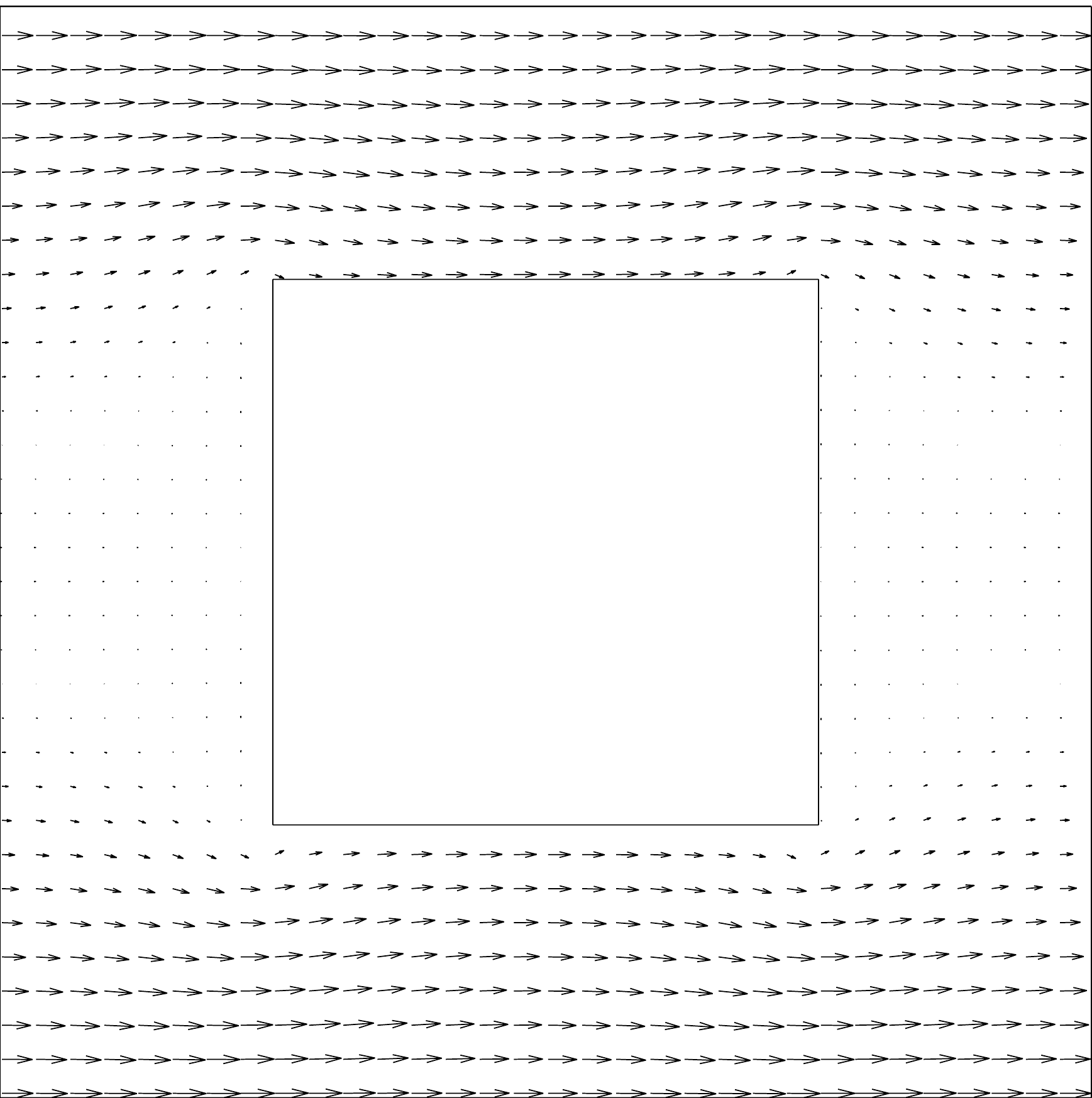}} 
    \end{picture}
\caption{ The velocity field $\b u^{1}$ for a solid obstacle $Q_2$ \eqref{rough}, and
  for a slippery bubble $Q_2$  \eqref{slippery}.  } \label{2vec}
\end{center}
\end{figure}

In Figures \ref{mu2100} and \ref{mu2vec}, we plot the two velocity fields of two-fluid flow 
  \eqref{porous} for two viscosity coefficients $\mu_2$.
When $\mu_2$ is big, the sticky inner fluid flows less and drags the outer fluid 
    near the interface.
When $\mu_2$ approaches infinity,  the inner fluid stops and it posts a zero 
  Dirichlet boundary condition for on tangential velocity of the outer fluid at
   the inner boundary $\tilde \Gamma=\partial Q_2$.
The model of a solid obstacle \eqref{rough} is a limit case of the model of two-fluid
   \eqref{porous} when $\mu_2\to\infty$.
We can compare the left chart of Figure \ref{2vec} 
   and the left chart of Figure \ref{mu2100}.

\begin{figure}[htb]\begin{center}\setlength\unitlength{1in}
    \begin{picture}(4.5,2.3)
 \put(0.0,0){\includegraphics[width=2.3in]{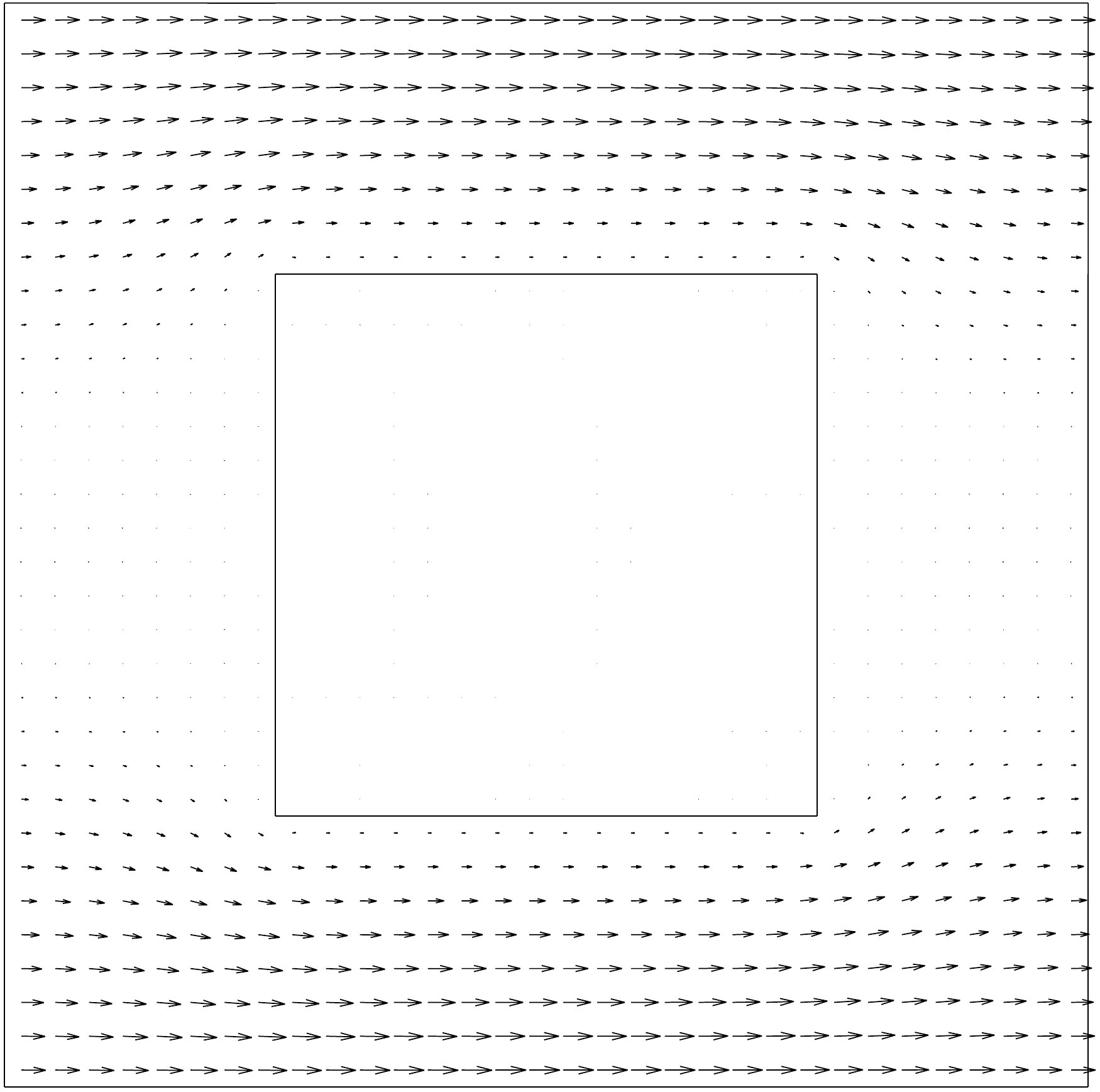}}
 \put(2.5,0.2){\includegraphics[width=1.9in]{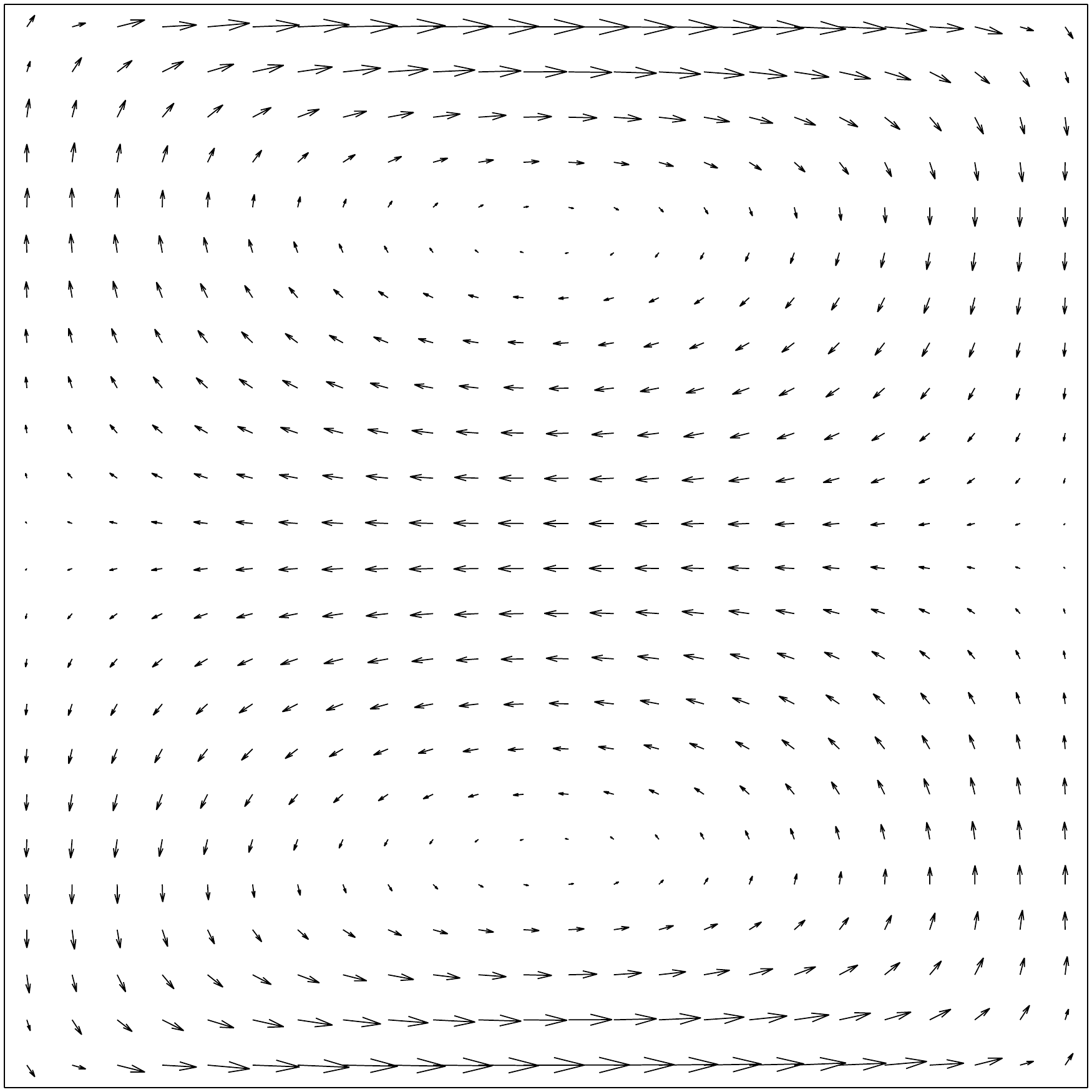}} 
    \end{picture}
\caption{ The velocity field $\b u_{3}$ for two-fluid flow
    \eqref{porous} with $\mu_2=10^{2}$ on $Q$ (left), on $Q_2$ (right, scaled by 200).  } 
   \label{mu2100}
\end{center}
\end{figure}

When $\mu_2$ approaches zero,  the inner fluid flow freely which
   produces little drag on the outer fluid.
In theory, the force inside fluid $Q_2$ may even push outer fluid somewhat.
But due to the zero outflow boundary condition on the velocity at $\partial Q_2$,
  such a force would be balanced by its left portion and right portion of
   an edge of $\partial Q_2$. 
It is equivalent to zero tangential stress boundary on the outer flow.
That is, model of a slippery bubble \eqref{slippery} is a limit 
    model of two-fluid  \eqref{porous} with $\mu_2\to 0$.
We may compare the right chart of Figure \ref{2vec} 
   and the left chart of Figure \ref{mu2vec}.

\begin{figure}[htb]\begin{center}\setlength\unitlength{1in}
    \begin{picture}(4.5,2.3)
 \put(0.0,0){\includegraphics[width=2.3in]{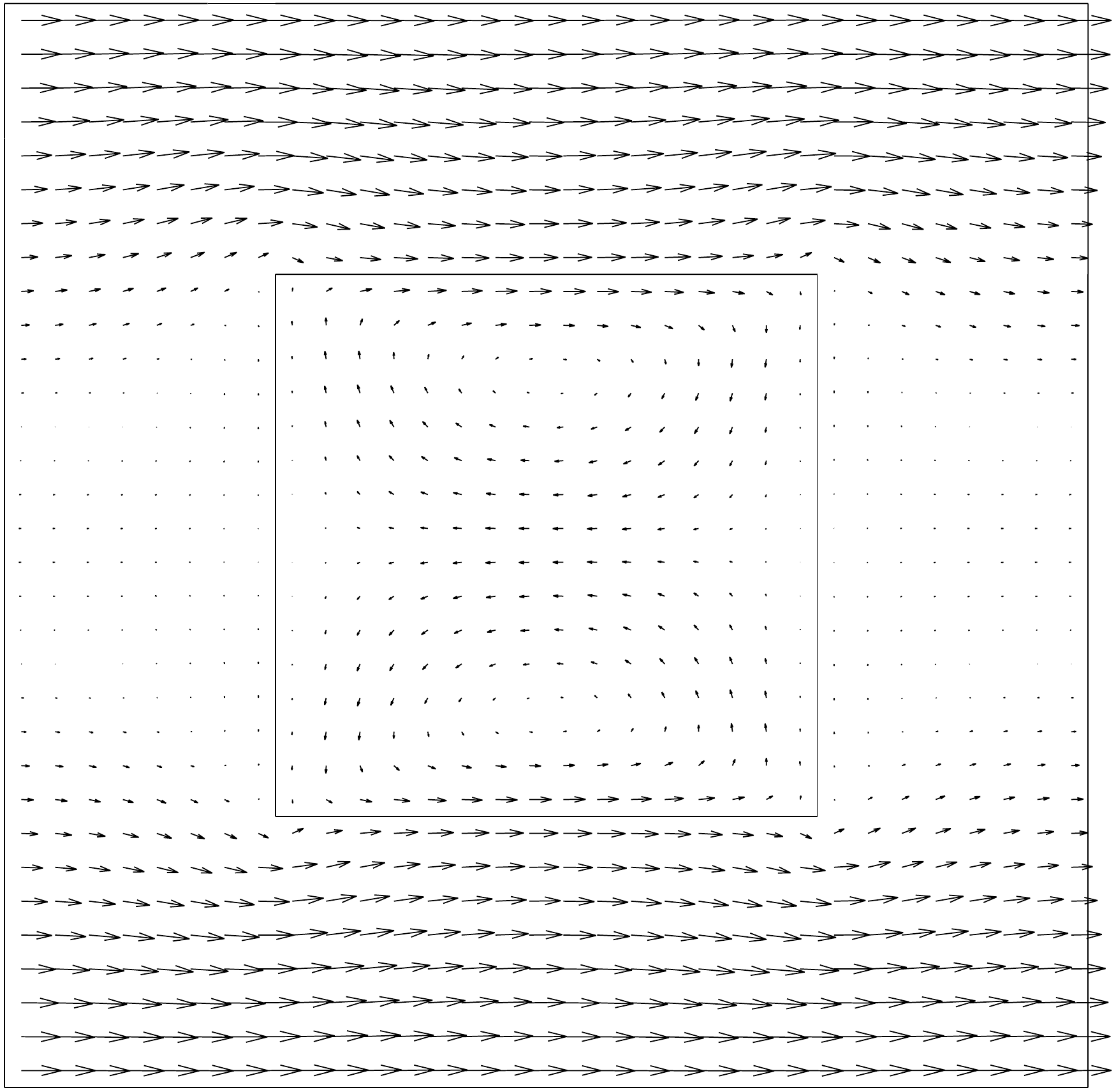}}
 \put(2.5,0.2){\includegraphics[width=1.9in]{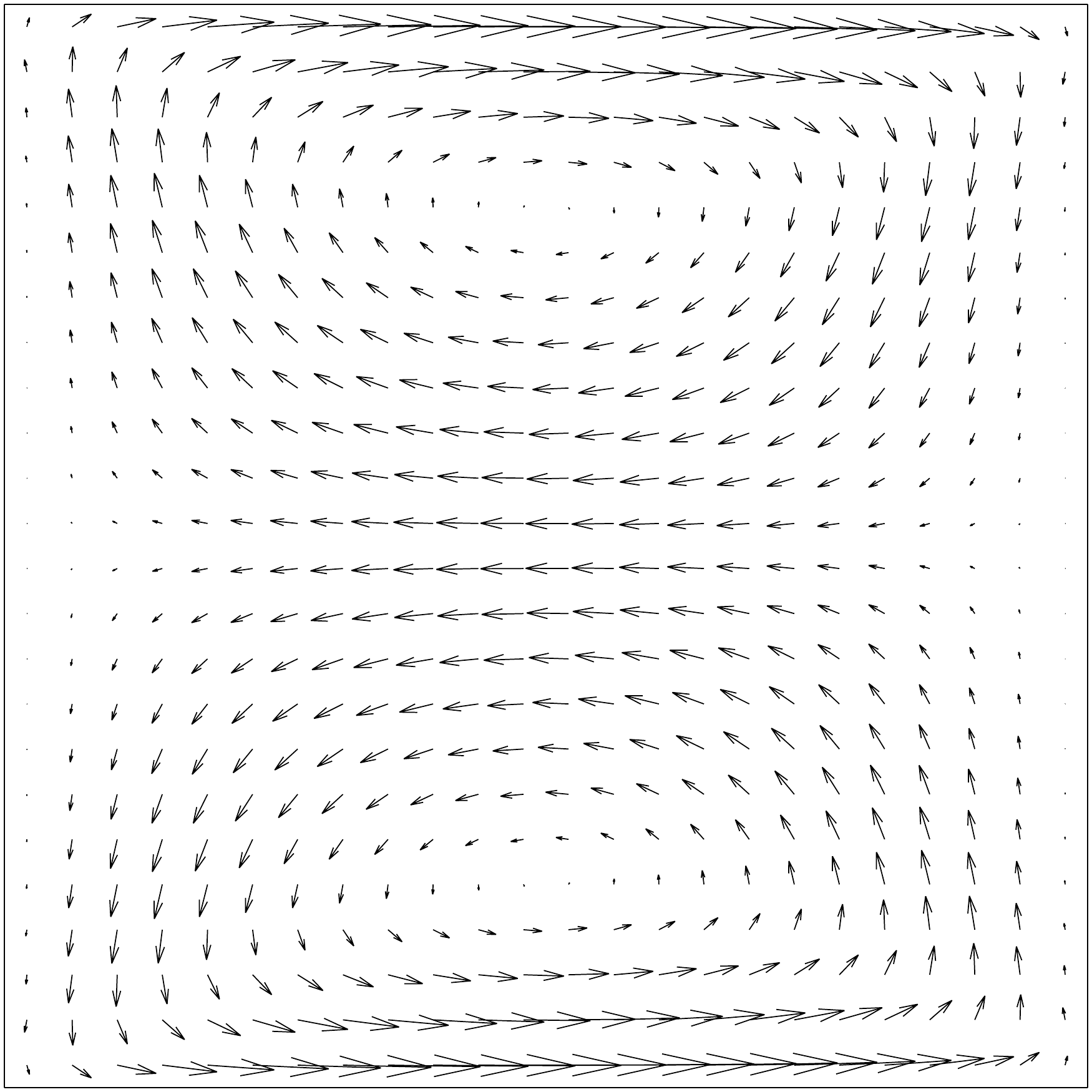}} 
    \end{picture}
\caption{ The velocity field $\b u_{3}$ for two-fluid flow
    \eqref{porous} with $\mu_2=10^{-2}$ on $Q$ (left), on $Q_2$ (right, scaled by 2).  } 
   \label{mu2vec}
\end{center}
\end{figure}

The homogenized permeability tensor $\bK=\begin{pmatrix} k_{11} & k_{12}\\ k_{21}& k_{22}
   \end{pmatrix} $ is computed by
 \an{ \label{k111} k_{11} &= \frac 1{|Q|} \int_{Q\setminus Q_2} \b u_{1} \cdot \b e_{1} d\b y
   \quad \t{ for } \eqref{rough}, \\
 \label{k112}  k_{11} &= \frac 1{|Q|} \int_{Q\setminus Q_2} \b u_{2} \cdot \b e_{1} d\b y
   \quad \t{ for }  \eqref{slippery}, \\
      k_{11} &= \frac 1{|Q|} \int_{Q } \b u_{3} \cdot \b e_{1} d\b y
   \quad \t{ for } \eqref{porous}. 
  \label{k113} }
Due to the symmetry,  in all our examples we have $ k_{11}=k_{22}$ and $ k_{12}=k_{21}=0. $
  
\begin{table}[h!]
  \centering \renewcommand{\arraystretch}{1.1}
  \caption{Computed permeability $k_{11}$ by \eqref{k111}-\eqref{k113}. }\label{t1}
\begin{tabular}{c|c|c|c|c|c}
\hline
level  &\eqref{rough} &\multicolumn{3}{c|}
 {\eqref{porous}} & \eqref{slippery}  \\ \hline  
 &  & $\mu_2=10^4$ & $\mu_2=1$  & $\mu_2=10^{-4}$ &  \\ 
\hline  
1&0.0105& 0.0105& 0.0122 &0.0140 &0.0140\\
2&0.0119& 0.0119& 0.0144 &0.0181 &0.0181\\
3&0.0125& 0.0125& 0.0154 &0.0209 &0.0209\\
4&0.0128& 0.0128& 0.0159 &0.0228 &0.0228\\
5&0.0129& 0.0129& 0.0161 &0.0240 &0.0240\\
 \hline 
\end{tabular}%
\end{table}%

To verify the convergence results stated in \eqref{large_z_asymp} and \eqref{small_z_asymp}, we solve the two-fluid problem \eqref{porous} with $\mu_1=1$ and $\mu_2=10^{-4},\,1,\, 10^4$.
 In Table \ref{t1}, this model is between the two `limiting' models \eqref{rough} and \eqref{slippery}. 
   
To see how viscosity $\mu_2$ influences the flow,  we plot $(\b u_{3})_1$ in  Figure \ref{2u1} for two different $\mu_2$ with $\mu_1 = 1$.

  \begin{figure}[h!]\begin{center}\setlength\unitlength{1in}
    \begin{picture}(4.8,1.6)
 \put(0,0){\includegraphics[width=2.4in]{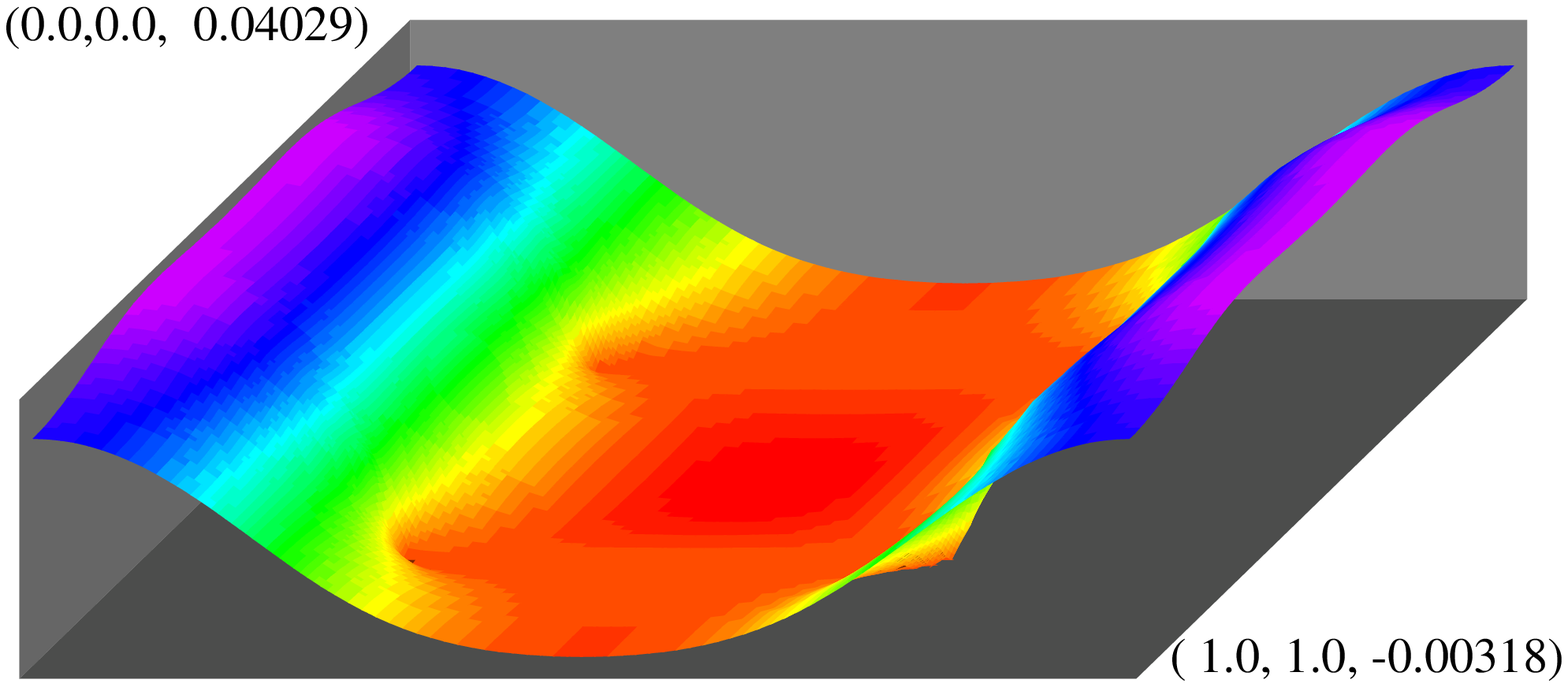}}
 \put(2.4,0){\includegraphics[width=2.4in]{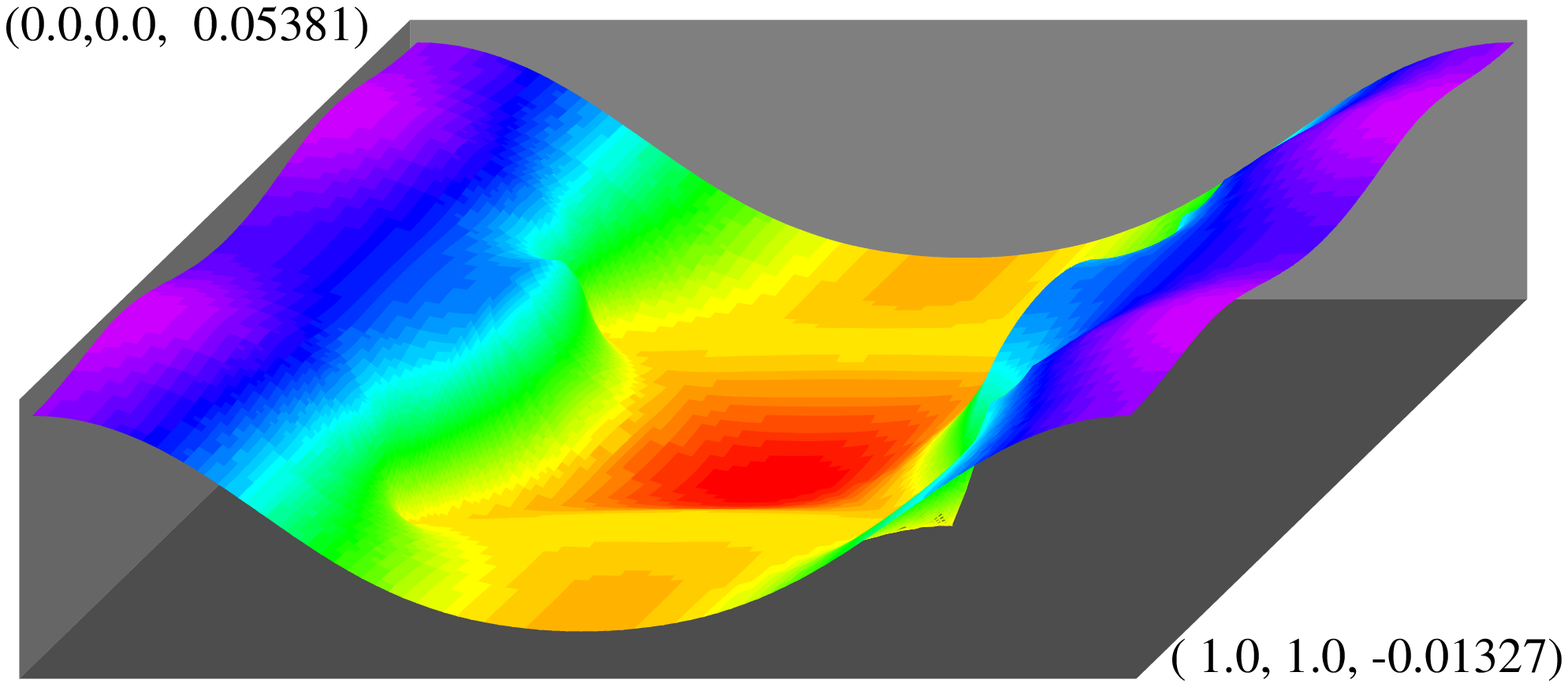}} 
    \end{picture}
\caption{ The first component of velocity $\b u_{3}$, from \eqref{porous},
    for $\mu=10^2$ and
    $\mu_2=10^{-2}$.  } \label{2u1}
\end{center}
\end{figure}

Though the magnitude of $(\b u_{3})_1$ is way larger than that of  $(\b u_{3})_2$, their corresponding stress are about the same size. In Figure \ref{2st},  we plot them for a comparison.  We plot the stress intensity $|\nabla \b u^{1}|$ in Figure \ref{2int}.
  
\begin{figure}[h!]\begin{center}\setlength\unitlength{1in}
    \begin{picture}(4.8,3.3)
 \put(0.0,0){\includegraphics[width=2.3in]{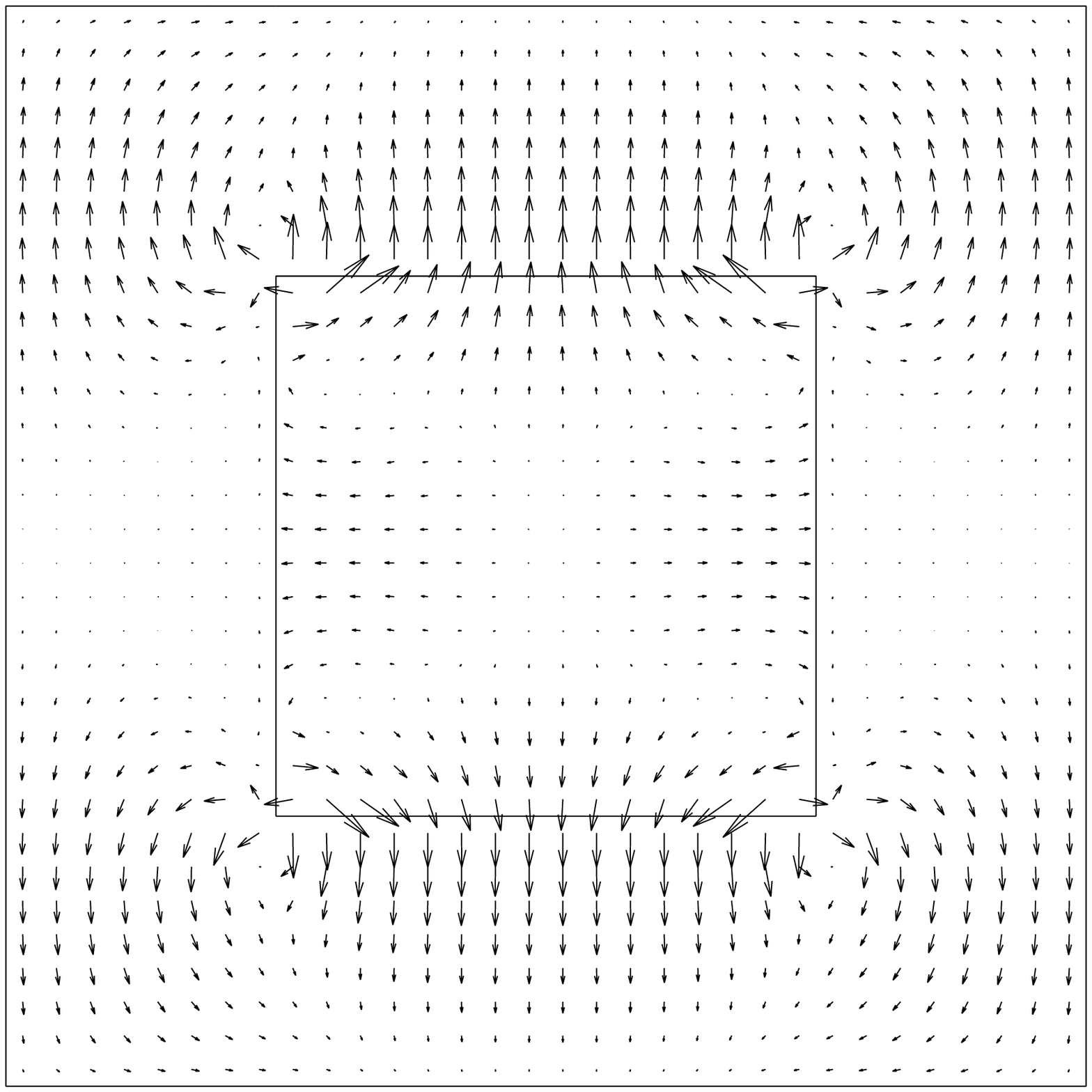}}
 \put(2.5,0){\includegraphics[width=2.3in]{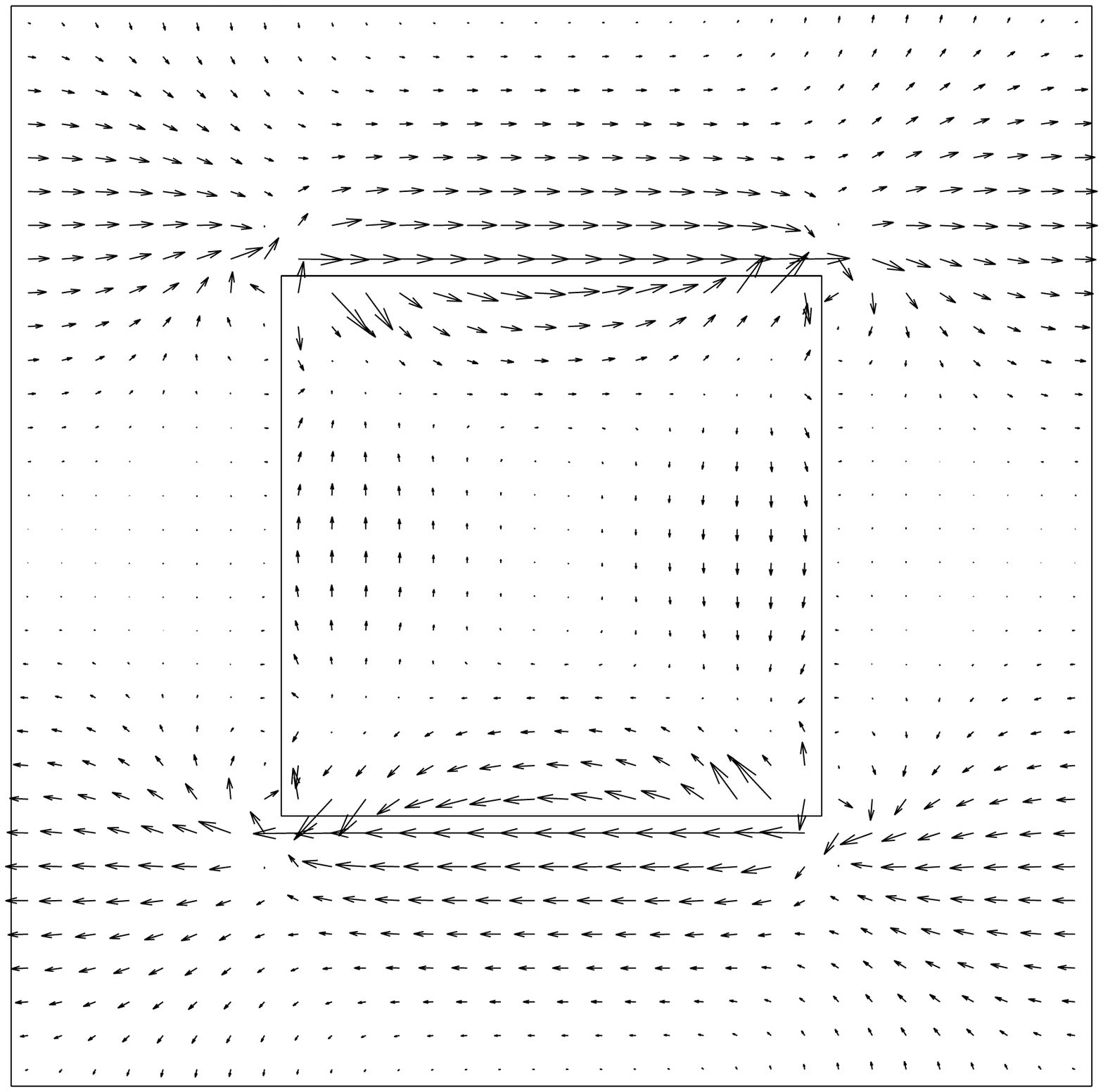}} 
    \end{picture}
\caption{ The stress  $\nabla (u_{3})_1$, and
   $\nabla (u_{3})_2$ for 
     \eqref{porous} with $\mu_2=10^{2}$.  } 
   \label{2st}
\end{center}
\end{figure}
    
\begin{figure}[h!]\begin{center}\setlength\unitlength{1in}
    \begin{picture}(5.1,1.5)
 \put(0,0){\includegraphics[width=1.5in]{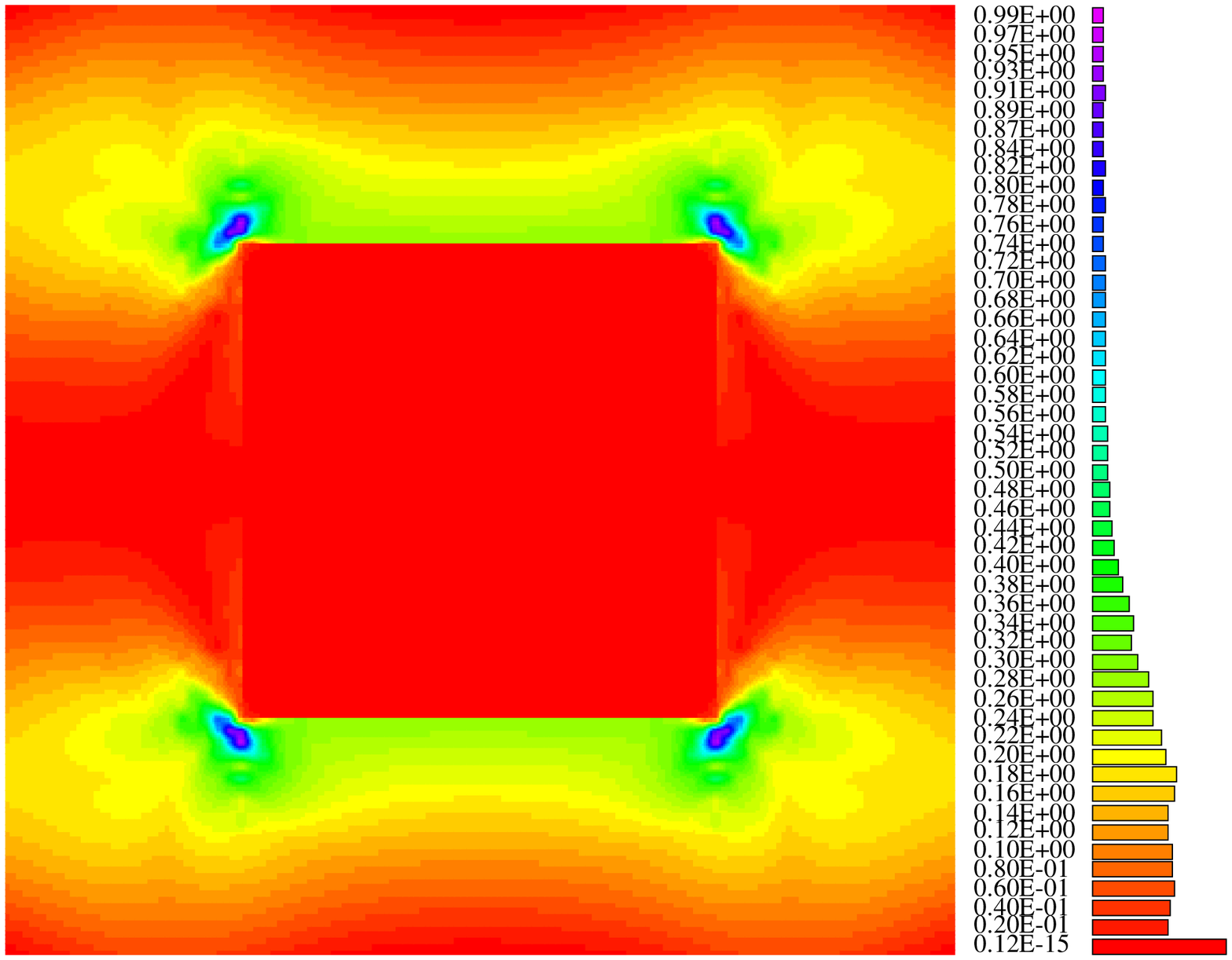}} 
  \put(1.6,0){\includegraphics[width=1.5in]{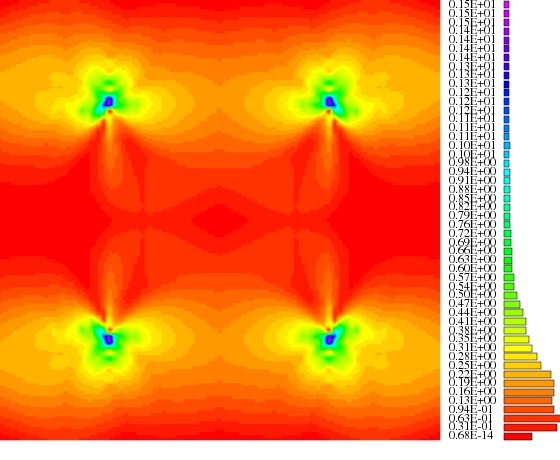}} 
  \put(3.2,0){\includegraphics[width=1.5in]{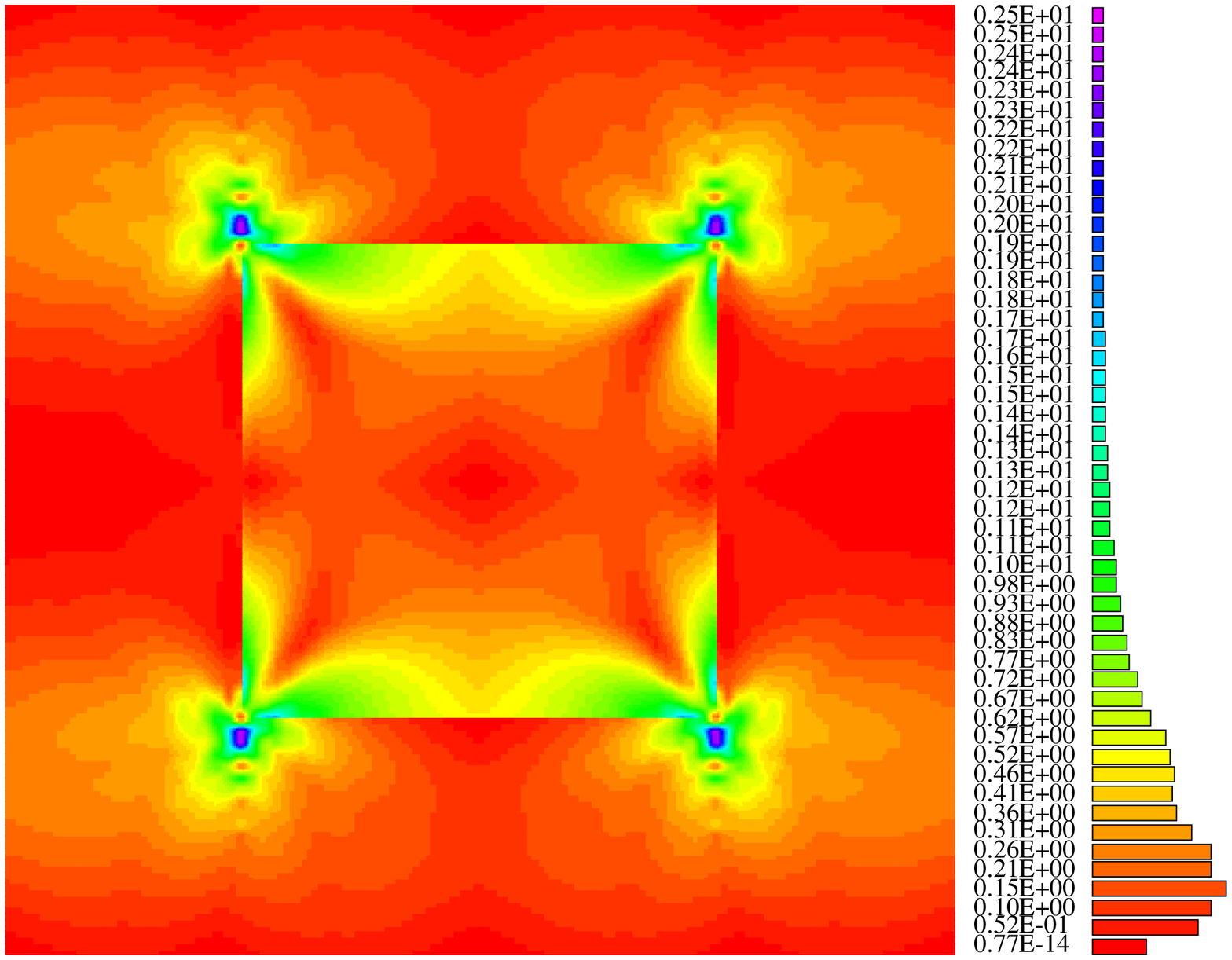}} 
    \end{picture}
\caption{ The stress intensity $|e( (\b u_3)_1 )|$
    in \eqref{porous} with $\mu_2=10^{2}$, $\mu_2=1$, $\mu_2=10^{-2}$.  } 
   \label{2int}
\end{center}
\end{figure}

Finally we compute the energy of the two-fluid flow,
\an{\label{en1} E(Q_2) &= \int_{Q_2} \mu(\b y)  e(\b u_3) :
           e(\b u_3) d \b y,  
    \\ \label{en2} E(Q) &= \int_{Q } \mu(\b y)  e(\b u_3) :
           e(\b u_3) d \b y .  }
 The homogenized permeability can also be computed by the energy,
 \an{\label{k114} k_{ij} &=\frac 1 {|Q|} \int_Q \mu(\b y)  e(\b u_3) :
           e(\b u_3) d \b y.  }
In Table \ref{t5}, we demonstrate the equivalence of these two definitions for $k_{11}$.

\begin{table}[h!]
  \centering \renewcommand{\arraystretch}{1.1}
  \caption{Computed permeability $k_{11}$ both ways and energy. }\label{t5}
\begin{tabular}{c|c|c|c|c}
\hline
level &$k_{11}$ \eqref{k113}   &$k_{11}$ \eqref{k114} & $E(Q_2)$ \eqref{en1}
          & $E(Q_2)/E(Q)$ \eqref{en2} \\ \hline  
          &\multicolumn{4}{c}{For $\b u_3$ in \eqref{porous} with 
           $\mu_2=10^2$}  \\ \hline  
 1&  0.107E-01&  0.107E-01&  0.952E-04&  0.888E-02\\
 2&  0.121E-01&  0.121E-01&  0.811E-04&  0.670E-02\\
 3&  0.127E-01&  0.126E-01&  0.676E-04&  0.534E-02\\
 4&  0.129E-01&  0.129E-01&  0.604E-04&  0.468E-02\\
 5&  0.130E-01&  0.130E-01&  0.570E-04&  0.438E-02\\  \hline  
   &\multicolumn{4}{c}{For $\b u_3$ in \eqref{porous} with 
           $\mu_2=1$}  \\ \hline  
 1&  0.122E-01&  0.122E-01& 0.752E-03&  0.615E-01\\
 2&  0.144E-01&  0.144E-01& 0.135E-02&  0.936E-01\\
 3&  0.154E-01&  0.154E-01& 0.175E-02&  0.113E+00\\
 4&  0.159E-01&  0.159E-01& 0.200E-02&  0.125E+00\\
 5&  0.162E-01&  0.162E-01& 0.215E-02&  0.132E+00\\  \hline  
  &\multicolumn{4}{c}{For $\b u_3$ in \eqref{porous} with 
           $\mu_2=10^{-4}$}  \\ \hline  
 1&  0.140E-01&  0.140E-01&  0.432E-06&  0.308E-04\\
 2&  0.181E-01&  0.181E-01&  0.109E-05&  0.602E-04\\
 3&  0.209E-01&  0.209E-01&  0.183E-05&  0.875E-04\\
 4&  0.228E-01&  0.228E-01&  0.254E-05&  0.111E-03\\
 5&  0.240E-01&  0.240E-01&  0.316E-05&  0.131E-03\\
 \hline 
\end{tabular}%
\end{table}%

\section{Conclusion and future work}
In this paper, we show that the permeability of a porous {material} \cite{tartar1980incompressible} and that of a bubbly fluid \cite{Lipton_Avellaneda_1990} are limiting cases of the complexified version of the two-fluid models posed in  \cite{Lipton_Avellaneda_1990}. We assume the viscosity of the inclusion fluid is $z\mu_1$ and the viscosity of the hosting fluid is $\mu_1$, $z\in\mathbb{C}$. The proof  is carried out by construction of solutions for large $|z|$ and small $|z|$ by an iteration process similar with the one used in  \cite{bruno1993stiffness,golden1983bounds} and analytic continuation.  Moreover, we also show that for a fixed microstructure, the permeabilities of these three cases share the same integral representation formula (IRF) \eqref{IRF_Ks_prime} with different values of $s$, as long as the 'contrast parameter' $s:=\frac{1}{z-1}$ is not in the interval  $[-\frac{2E_2^2}{1+2E_2^2},-\frac{1}{1+2E_1^2}]$, where the constants $E_1$ and $E_2$ are the extension constants that depend on the geometry of $Q_1$, $Q_2$ and $Q$. For the mixture with bubbles, $s=-1$ and thus
\begin{equation}
K^{(B)}=\bK^{(D)}+\int_{\frac{1}{1+2E_1^2}}^{\frac{2E_2^2}{1+2E_2^2}} \frac{1}{1-t}d\pmb{\sigma}(t)
\end{equation} 
Also, we note that the matrix-valued measure in \eqref{integral_rep_K_new} has a Dirac measure sitting at $\lambda=0$ with strength equal to $\bK^{(D)}$. The permeability $\bK^{(D)}$ is related to the measure in the sense that the zero-th moment of the measure is equal to $\bK(z=1)-\bK^{(D)}$.

Clearly, the positive matrix-valued measure $d\pmb{\sigma}$ is independent of $s$ and it characterizes how the geometry influences the permeability. We have shown that this measure is related to the projection measure of the self-adjoint operator $\Gamma_\chi$ and its moments can be computed by equation \eqref{moment_relation}. 

Because the IRF is valid for most of $s$ on the complex plane, the IRF will be useful in the study of two-fluid mixture with complex viscosities such as dehomogenization for these fluid. Also, the integration limits in the IRF should imply bounds on the permeability tensors. We will explore the results of this paper in these direction in the future.  

\pmb{Acknowledgement} 
The work of CB and MYO was partially sponsored by the US National Science foundation via grants NSF-DMS-1413039 and NSF-DMS-1821857.
\bibliographystyle{acm}
\bibliography{permeability_reference}

\end{document}